\newcommand{\thickhline}{%
    \noalign {\ifnum 0=`}\fi \hrule height 1pt
    \futurelet \reserved@a \@xhline
}
\newcolumntype{"}{@{\hskip\tabcolsep\vrule width 1pt\hskip\tabcolsep}}
\def\W{{\mathcal{W}}}
\def\inv{{\sf{inv}_c}}
\newcommand\bD{{\boldsymbol D}}
\def\NC{{\sf{NC}}}
\def\In{\mathcal{I}(n)}
\def\Pol{{\sf{Pol}}}
\def\A{{\mathbb{Z}[v, v^{-1}]}}
\newcommand\bm{{\boldsymbol m}}
\newcommand\tl{{\mathrm{TL}_n}}
\newcommand\Bni{{\mathcal{B}_{n+1}}}
\def\ncs{{x}}
\def\ls{{\ell_{\mathcal{S}}}}
\def\T{{\mathcal{T}}}
\def\lt{{\ell_{\T}}}
\def\leqt{{\leq_{\T}}}
\def\leqv{\leq_{\mathcal{V}}}
\def\Std{{\sf{Std}}}
\def\cl{c_{\mathrm{lin}}}
\def\F{{\sf{FC}}}
\def\S{{\mathcal{S}}}
\definecolor{darkblue}{rgb}{0.0,0,0.7} 
\newcommand{\darkblue}{\color{darkblue}} 
\definecolor{darkred}{rgb}{0.7,0,0} 
\newcommand{\defn}[1]{\emph{\darkblue #1}} 
\def\NC{{\sf{NC}}}
\def\Pol{{\sf{Pol}}}
\def\ls{{\ell_{\mathcal{S}}}}
\def\Vert{{\sf{Vert}}}
\def\Sn{{\mathfrak{S}_{n+1}}}
\def\supp{{\sf{supp}}}
\def\supps{{{\sf{supp}}}_{\mathcal{S}}}
\def\Subf{{\sf{Sub}_{\sf{FC}}}}
\newtheorem{theorem}{Theorem}[section]
\newtheorem{proposition}[theorem]{Proposition}
\newtheorem{lemma}[theorem]{Lemma}
\newtheorem{corollary}[theorem]{Corollary}
\theoremstyle{definition}
\newtheorem{definition}[theorem]{Definition}
\newtheorem{rmq}[theorem]{Remark}
\newtheorem{exple}[theorem]{Example}
\newtheorem{nota}[theorem]{Notation}
\theoremstyle{remark}
\newcommand{\ts}{\textsuperscript}
\title{Coxeter-Catalan combinatorics and Temperley-Lieb algebras}
\author{Thomas Gobet}
\address{TU Kaiserslautern, Fachbereich Mathematik, Postfach 3049, 67653 Kaiserslautern, Germany.}
\email{gobet@mathematik.uni-kl.de} 
\begin{document}
\maketitle
\begin{abstract}
We introduce bijections between generalized type $A_n$ noncrossing partitions (that is, associated to arbitrary standard Coxeter elements) and fully commutative elements of the same type. The latter index the diagram basis of the classical Temperley-Lieb algebra, while for each choice of standard Coxeter element the corresponding noncrossing partitions also index a basis, given by the images in the Temperley-Lieb algebra of the simple elements of the dual Garside structure (associated to this choice of standard Coxeter element) of the Artin braid group on $n+1$ strands. We then show that our bijections come from triangular base changes between the diagram basis and the various bases indexed by noncrossing partitions, by explicitly describing the orders giving triangularity. These orders were introduced in a joint paper with Williams and provide exotic lattice structures on noncrossing partitions. Several combinatorial objects are introduced along the way, including an involution on the set of noncrossing partitions.\\
~\\
  
\bigskip

\noindent\textbf{Keywords.} Temperley-Lieb algebra, Artin braid group, Noncrossing partitions. 

\end{abstract}
\tableofcontents

\section{Introduction}

Catalan numbers appear in many locations in mathematics (see \cite{Stanley} for a list of objects counted by the Catalan numbers). One famous set of combinatorial objects having Catalan enumeration is the set of noncrossing partitions \cite{kreweras1972partitions}. Noncrossing partitions can naturally be seen as elements of the symmetric group $\mathfrak{S}_{n+1}$ associated to a fixed $(n+1)$-cycle, allowing a generalization of noncrossing partitions to finite Coxeter groups by replacing the $(n+1)$-cycle by a Coxeter element. The refinement order endows noncrossing partitions with a lattice structure satisfying many interesting properties. We refer the reader to \cite{Arm} and the references thereof for more on the topic.  

The aim of this paper is to explore a link between noncrossing partitions and (classical) Temperley-Lieb algebras. To any integer $n\geq 1$ is attached a Temperley-Lieb algebra $\tl$. It is an associative, unital algebra over a ring $\mathcal{A}$ of Laurent polynomials, which is free as an $\mathcal{A}$-module, of rank equal to the Catalan number $C_{n+1}=\frac{1}{n+2}\binom{2(n+1)}{n+1}$ (see for instance \cite{KaTu}). It has a basis indexed by planar diagrams or fully commutative permutations of the symmetric group $\mathfrak{S}_{n+1}$. On the other hand, the set of noncrossing partitions of $\mathfrak{S}_{n+1}$ has also cardinality $C_{n+1}$. 

Zinno \cite{Z} discovered an alternative basis of $\tl$ indexed by so-called \textit{canfacs}. Canfacs are certain elements of the $(n+1)$-strand Artin braid group, which turn out to be an incarnation of noncrossing partitions of the cycle $(1~2~\cdots~n+1)$. There is a natural multiplicative homomorphism between the $(n+1)$-strand Artin braid group and the Temperley-Lieb algebra $\tl$ and Zinno's basis is obtained by mapping the canfacs from the Artin braid group to the Temperley-Lieb algebra. 

The canfacs are of particular interest for the study of the braid group: they form a distinguished set of so-called \textit{simple} elements of a Garside structure on the Artin-braid group, called the \textit{dual} Garside structure (we do not elaborate here on Garside structures and information provided by such structures since it is not relevant for our purposes; the interested reader is encouraged to have a look at \cite{DDGKM} for more on Garside monoids and groups). It was originally discovered by Birman-Ko-Lee \cite{BKL}.

Birman-Ko-Lee's approach was later generalized to noncrossing partitions of an arbitrary standard Coxeter element of a finite Coxeter group (\cite{BDM}, \cite{BW1}, \cite{Dual}). More precisely, given a finite Coxeter group and a standard Coxeter element one can associate to this data a Garside structure on the corresponding Artin-Tits group and the simple elements of the Garside structure are lifts of the noncrossing partitions of the chosen standard Coxeter element in the Artin-Tits group. The dual Garside structure on an Artin-Tits group is therefore not unique: it depends on a choice of standard Coxeter element. Zinno's canfacs correspond to the case where the Coxeter element is the $(n+1)$-cycle $(1~2~3~\cdots~n+1)$. It is natural to ask whether one always gets a basis of $\tl$ when mapping the simple elements of a dual Garside structure of the $(n+1)$-strand Artin braid group to the Temperley-Lieb algebra. This holds as shown by Vincenti \cite{Vincenti} (following an idea of Lee and Lee \cite{LL}). 

In this paper, we explore further the relation between the above mentioned family of bases of $\tl$ (there is one basis for each Coxeter element and the basis is indexed by the noncrossing partitions of that Coxeter element) and the basis indexed by fully commutative permutations (the "diagram basis"). More precisely, we show that there is always a triangular base change between any of the bases indexed by noncrossing partitions and the diagram basis, refining results of \cite{Z} and \cite{Gob} (there the Coxeter element is $c=(1~2~\cdots~n+1)$). We explicitly give the orderings of the bases yielding triangularity and give a full combinatorial definition and description of the bijections between noncrossing partitions and fully commutative elements induced by these base changes. At the level of noncrossing partitions, these orders are linear extensions of an exotic lattice structure on noncrossing partitions introduced in a joint work with Williams \cite{GobWil} (which is isomorphic to the lattice of nonnesting partitions). Our bijections make use of a new involution on the set of noncrossing partitions. 

The paper is organized as follows: in Section $2$ we recall the definition and properties of noncrossing partitions. In Section $3$ we introduce bijections between noncrossing partitions and fully commutative elements. In Section $4$ we recall some results on the exotic lattice structure on noncrossing partitions mentioned above. In Section $5$ we describe the incarnations of noncrossing partitions in the Artin braid group. In Section $6$ we recall the definition and some properties of the classical Temperley-Lieb algebra. In Section $7$ we show the triangular base change between the diagram basis and the basis obtained by mapping the lifts of noncrossing partitions in the Artin braid group from Section $5$ to the Temperley-Lieb algebra. We also show that the triangular base change is induced by linear extensions of the orders from Section $4$; the way of ordering the diagram basis (indexed by fully commutative permutations) is given by taking the image of these linear extensions under the bijections from Section $3$ (in particular, the bijections induced from these base changes are those from Section $3$).

\section{Noncrossing partitions}

The aim of this section is to introduce noncrossing partitions both in a combinatorial and algebraic way.

\subsection{Combinatorial noncrossing partitions}\label{sec:combin}
For $n\in\mathbb{Z}_{\geq 0}$ we denote by $[n]$ the set $\{1,2,\dots, n\}$. Let $n\in\mathbb{Z}_{\geq 1}$.  A \defn{set partition} of $[n+1]$ is a collection $x$ of nonempty disjoint subsets of $[n+1]$ whose union is all of $[n+1]$; these subsets are called the \defn{blocks} of the set partition.  Given a set partition $x$, a \defn{bump} is a pair $(i,k)$ with $i<k$ in the same block of $x$ such that there is no $j$ in that block with $i<j<k$.

A \defn{noncrossing partition} $x$ of $[n+1]$ is a set partition with the condition that
		if $(i_1,i_2)$, $(j_1,j_2)$ are two distinct bumps in $x$, then it is not the case that $i_1<j_1<i_2<j_2$. Noncrossing partitions of $[n+1]$ are known to be counted by the Catalan number $C_{n+1}:=\frac{1}{n+2}\binom{2(n+1)}{n+1}$. They are item $159$ in \cite{Stanley}. 

The \defn{graphical representation} of a noncrossing partition $\ncs$ is the set of convex hulls of the blocks of $\ncs$ when drawn around a regular $(n+1)$-gon with vertices labeled with $[n+1]$ in clockwise order. The definition above is equivalent to the non-intersection of the hulls. An example is given in Figure \ref{figure:nonc}. Noncrossing partitions are item $160$ in \cite{Stanley}. Ordering them by refinement yields the \defn{noncrossing partition lattice}~\cite{kreweras1972partitions}. 

We write $\Pol(\ncs)$ for the set of polygons occurring in the graphical representation of a noncrossing partition $\ncs$ (an edge is always seen as a polygon, but a single point not). A polygon $P\in\Pol(\ncs)$ is given by a sequence of indices $i_1, i_2, \dots ,i_k,$ where the $i_j$'s index the vertices of $P$ and $i_1<i_2<\dots<i_k$. We set $\Vert(P):=\{i_1, i_2, \dots, i_k \}$ and say that the index $i_1$ of $P$ is \defn{initial} and $i_k$ is \defn{terminal}. For example, in Figure~\ref{figure:nonc}, the initial index of $P_1$ is $2$ and the terminal one is $5$, while the initial index of $P_2$ is $1$ and the terminal one is $6$. For convenience we will sometimes write $P=[i_1 i_2\cdots i_k]$.


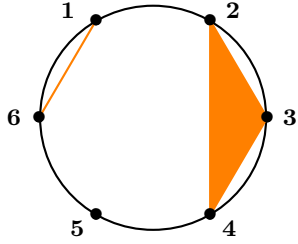
\begin{figure}[htbp]
\begin{center}
\begin{tabular}{ccc}
& \begin{pspicture}(0,0)(6,3)
\pscircle(3,1.5){1.5}
\psdots(4.5,1.5)(3.75,2.79)(3.75,.21)(2.25,.21)(2.25,2.79)(1.5,1.5)
\pspolygon[linecolor=orange, fillstyle=solid, fillcolor=orange](3.75,2.79)(4.5,1.5)(3.75,.21)
\psline[linecolor=orange](2.25,2.79)(1.5,1.5)
\rput(3.75,.21){$\bullet$}
\rput(4.5,1.5){$\bullet$}
\rput(3.75,2.79){$\bullet$}
\rput(2.25,.21){$\bullet$}
\rput(2.25,2.79){$\bullet$}
\rput(1.5,1.5){$\bullet$}
\rput(4.05,2.92){\textrm{{\footnotesize \textbf{2}}}}
\rput(4.8,1.5){\textrm{{\footnotesize \textbf{3}}}}
\rput(4,.03){\textrm{{\footnotesize \textbf{4}}}}
\rput(2,.03){\textrm{{\footnotesize \textbf{5}}}}
\rput(1.17,1.5){\textrm{{\footnotesize \textbf{6}}}}
\rput(1.88,2.92){\textrm{{\footnotesize \textbf{1}}}}
\end{pspicture} & \\
\end{tabular}
\end{center}
\caption{Graphical representation of the noncrossing partition $\ncs=\{\{1,6\},\{2,3,4\}, \{5\}\}$. There are two polygons $P_1=[234]$ and $P_2=[16]$.}
\label{figure:nonc}
\end{figure}



\subsection{Coxeter elements and algebraic noncrossing partitions}\label{sec:nc_algebraic}
Let $(\mathcal{W}, \mathcal{S})$ be a Coxeter system with $\S$ finite. This is to say that $\W$ admits a presentation with generators $\S$ of order two and relations (called \defn{braid relations}) of the form $sts\cdots = tst\cdots$, $s, t\in S$, $s\neq t$, where the number $m_{st}\in \{2,3,\dots\}\cup \{\infty\}$ of letters in the left hand side equals the number $m_{ts}$ in the right hand side (see \cite{bourbaki} or \cite{humph} for basics on Coxeter groups). Elements of $\S$ are called \defn{simple reflections}. The \defn{length} of $w \in \W$ is the minimal number $\ls(w)$ of elements of $\S$ required to write $w = s_{i_1}\cdots s_{i_{\ls(w)}}$ and such a product is an \defn{$\S$-reduced expression} for $w$. A word in the elements of $\S$ is called an \defn{$\S$-word}. We call \defn{$\S$-support} and denote by $\supps(w)$ the subset of $\S$ consisting of those simple reflections which occur in a reduced expression of $w$. It is well-defined since one can pass from any reduced expression of $w$ to any other by applying a sequence of braid relations.  

The set $\mathcal{T}:=\bigcup_{w\in\mathcal{W}} w\mathcal{S}w^{-1}$ is the set of \defn{reflections} of $\mathcal{W}$. The \defn{reflection length} of an element $w \in \W$ is the minimal number $\lt(w)$ of reflections required to write $w = s_{i_1}\cdots s_{i_{\lt(w)}}$ and such a product is a \defn{$\T$-reduced expression} for $w$. A word in the elements of $\T$ is called a \defn{$\T$-word}. The reflection length induces the \defn{absolute order} $\leqt$ on $\W$, that is, \[ u\leqt v \text{ if and only if } \lt(u)+\lt(u^{-1} v)=\lt(v).\] Notice that conjugation preserves $\leqt$.

A \defn{standard Coxeter element} in $\mathcal{W}$ is a product of all the elements of $\S$ in some order. We denote by $\Std(\mathcal{W})$ the set of standard Coxeter elements. 


For $c\in\Std(\mathcal{W})$ we let \[\NC(\mathcal{W},c):=\{u\in \W~|~u\leqt c \}\] be the order ideal of the Coxeter element $c$ (endowed with the order induced by $\leqt$). 

Let us from now on assume in addition that $\W$ is finite. One then has $\T\subseteq\NC(\mathcal{W},c)$ for any $c\in\Std(\mathcal{W})$ (see \cite[Lemma 1.3.3]{Dual}). For irreducible $\W$ the set $\NC(\W,c)$ is counted by the generalized Catalan number $\mathrm{Cat}(\W)$ (see \cite[Section 5.2]{Dual}). It is known to be a lattice (see \cite{Dual}, \cite{BW}) usually called \defn{noncrossing partition lattice} -- the link with the noncrossing partition lattice defined in Subsection \ref{sec:combin} is explained in Subsection \ref{sub:symm} below. Distinct Coxeter elements yield isomorphic lattices. For more on these lattices we refer to \cite{Arm} and the references thereof. 

\subsection{The case of the symmetric group}\label{sub:symm} The combinatorial noncrossing partitions of Subsection \ref{sec:combin} correspond to the algebraic noncrossing partitions of Subsection \ref{sec:nc_algebraic} in the case where $\W$ is the symmetric group and $c$ is as follows: using the identifications $\W=\Sn$, $\S=\{s_i:=(i,i+1)\}_{i=1}^n$, the Coxeter element is the $(n+1)$-cycle $c_{\mathrm{lin}}:=(1,2,\dots,n+1)=s_1 s_2\dots s_n$ which we call the \defn{linear} Coxeter element. In this paper we will only be interested by noncrossing partitions associated to standard Coxeter elements in the symmetric group, hence we will identify the elements of $\W$ group with permutations and the reflections with transpositions, and rather use the symmetric group terminology than the Coxeter theoretic one. 

Given $w\in\Sn$, define its \defn{support} ${\sf{supp}}(w)$ to be the complement of its set of fixed points in $\{1,\dots, n+1\}$. If $w$ is a cycle, then $\lt(w)=|{\sf{supp}}(w)|-1$ (see \cite[Proposition 2.3]{Brady}). For all $w\in \Sn$, the reflection length of $w$ is the sum of the reflection lengths of the cycles occurring in the decomposition of $w$ into a product of disjoint cycles (see \cite[Lemma 2.2]{Brady}). 

Each standard Coxeter element is an $(n+1)$-cycle but the converse is false in general; in the next subsection we explain how to characterize those $(n+1)$-cycles which lie in $\Std(\Sn)$. Mapping a permutation $\ncs \in \NC(\Sn,c_{\mathrm{lin}})$ to the set partition whose blocks are the supports of the cycles of $\ncs$ is an isomorphism of posets between $\NC(\Sn,c_{\mathrm{lin}})$ and the combinatorial noncrossing partition lattice (see \cite{Biane}). We explain in the next subsection how to generalize this to the case where $c$ is an arbitrary standard Coxeter element. 

\subsection{Graphical representations and conventions}\label{sec:conv}

Let $c\in\Std(\Sn)$. As in the case where $c=\cl$, the elements of $\NC(\Sn,c)$ can be represented by disjoint unions of polygons with vertices on a circle labeled by $\{1,\dots,n+1\}$. For $c=(i_1, i_2,\dots, i_{n+1})$, the labeling of the circle is given (in clockwise order) by $i_1 i_2\cdots i_{n+1}$. We call it the \defn{c-labeling}. Assuming $i_1=1$, $i_k=n+1$, one has that $c$ lies in $\Std(\Sn)$ if and only if the sequence $i_1i_2\cdots i_k$ is increasing while the sequence $i_k\cdots i_{n} i_{n+1}$ is decreasing (see \cite[Lemma 8.2]{GobWil}). An example is given in Figure \ref{figure:orientation}. To any cycle in the decomposition of $x\in\NC(\Sn,c)$ into a product of disjoint cycles, one associates the polygon obtained as convex hull of the set of points lying in the support of the cycle (we identify the points with their labels). The diagram obtained has the property that all the polygons are disjoint, equivalently that the partition defined by the cycle decomposition of $x$ is noncrossing for the $c$-labeling.


Indeed, every element in $\Std(\Sn)$ is a conjugate of $c_{\mathrm{lin}}$ in $\Sn$ (this is clear if one keeps in mind that standard Coxeter elements are $(n+1)$-cycles, but it is a general result for Coxeter groups whose Dynkin diagram is a tree that any two standard Coxeter elements are conjugate by a sequence of cyclic conjugations, see  \cite[Theorem 3.1.4]{GP}; in that case standard Coxeter elements are in bijection with orientations of the Dynkin diagram, see \cite[Theorem 1.5]{Shi}). Explicitly, if $c=(i_1,\dots, i_{n+1})\in\Std(\Sn)$ with $i_1=1$, define $w\in \Sn$ by $w(j)=i_j$ for each $j=1,\dots,n+1$. Then $c=wc_{\mathrm{lin}}w^{-1}$. Since conjugation preserves the partial order $\leqt$, we have $u\leqt c_{\mathrm{lin}}$ if and only if $wuw^{-1}\leqt c$, hence $v\leqt c$ if and only if the graphical representation of $v$ on the circle with $c$-labeling is noncrossing. Hence an element of $\NC(\Sn, c)$ will be called a \defn{$c$-noncrossing partition} or simply a \defn{noncrossing partition} if no confusion is possible. 

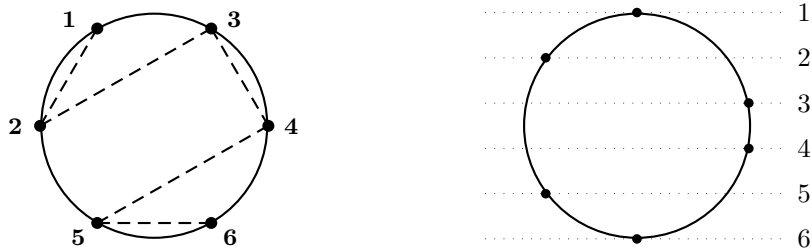
\begin{figure}[h!]
\begin{center}
\begin{tabular}{ccc}
& \begin{pspicture}(1,0)(6,3)
\pscircle(3,1.5){1.5}
\psdots(4.5,1.5)(3.75,2.79)(3.75,.21)(2.25,.21)(2.25,2.79)(1.5,1.5)
\psline[linestyle=dashed](2.25,2.79)(1.5,1.5)
\psline[linestyle=dashed](1.5,1.5)(3.75,2.79)
\psline[linestyle=dashed](3.75,2.79)(4.5,1.5)
\psline[linestyle=dashed](4.5,1.5)(2.25,.21)
\psline[linestyle=dashed](2.25,.21)(3.75,.21)
\psdots[dotsize=4.5pt](3.75,.21)(2.25,.21)
\psdots[dotsize=4.5pt](1.5,1.5)(2.25,2.79)(4.5,1.5)(3.75,2.79)
\rput(4.05,2.92){\textrm{{\footnotesize \textbf{3}}}}
\rput(4.8,1.5){\textrm{{\footnotesize \textbf{4}}}}
\rput(4,.03){\textrm{{\footnotesize \textbf{6}}}}
\rput(2,.03){\textrm{{\footnotesize \textbf{5}}}}
\rput(1.17,1.5){\textrm{{\footnotesize \textbf{2}}}}
\rput(1.88,2.92){\textrm{{\footnotesize \textbf{1}}}}
\end{pspicture} & \begin{pspicture}(0,0)(6,3)
\pscircle(3,1.5){1.5}

\psline[linestyle=dotted, linewidth=0.4pt](1,3)(5,3)
\psline[linestyle=dotted, linewidth=0.4pt](1,2.4)(5,2.4)
\psline[linestyle=dotted, linewidth=0.4pt](1,1.8)(5,1.8)
\psline[linestyle=dotted, linewidth=0.4pt](1,1.2)(5,1.2)
\psline[linestyle=dotted, linewidth=0.4pt](1,0.6)(5,0.6)
\psline[linestyle=dotted, linewidth=0.4pt](1,0)(5,0)

\psdots(3,3)(1.8, 2.4)(4.4696, 1.8)(4.4696, 1.2)(1.8, 0.6)(3,0)
\rput(5.2,3){\small $1$}
\rput(5.2,2.4){\small $2$}
\rput(5.2,1.8){\small $3$}
\rput(5.2,1.2){\small $4$}
\rput(5.2,0.6){\small $5$}
\rput(5.2,0){\small $6$}
\end{pspicture}
\end{tabular}
\end{center}
\caption{The $c$-labeling for $c=s_2 s_1 s_3 s_5 s_4=(1,3,4,6,5,2)\in\Std(\Sn)$. Note that when going from point $1$ to point $6$ we obtain a noncrossing zigzag.}
\label{figure:orientation}
\end{figure}

\begin{definition}\label{def:admiss}
Let $i,j,k\in[n+1]$ with $|\{i,j,k\}|=3$ and assume that $x=(i,j,k)\in\NC(\Sn,c)$. The triangle with vertices $i,j,k$ on the $c$-labeling represents the element $x\in\NC(\Sn, c)$ and we have $\ell_\T(x)=2$. We say that the ordered triple $(t_1=(i,j), t_2=(j,k), t_3=(k,i))$ (or any cyclic permutation of it) of reflections is a \defn{$c$-admissible triangle}. In that case $x$ is equal to the product of any two successive reflections in the triple, that is, $$x=t_1 t_2= t_2 t_3= t_3 t_1.$$
\end{definition}

It is more convenient for some proofs to slightly modify the graphical representation as follows: let $c=(i_1,\dots, i_{n+1})\in\Std(\Sn)$ with $i_1=1$, $i_k=n+1$. Instead of drawing the points on the circle such that the length of the segment between any two successive points is constant, we draw the point with label $1$ at the top of the circle, the point with label $n+1$ at the bottom, the points with label in $i_k\cdots i_n i_{n+1}$ on the left and the points with label in $i_1i_2\cdots i_k$ on the right, each point having a specific height depending on its label. That is, if $i, j\in[n+1]$, $i<j$, then the point $i$ is higher than the point $j$ (as done in the right of Figure \ref{figure:orientation}). Also, when representing a noncrossing partition we will use curvilinear polygons instead or regular polygons for convenience (as in Figure \ref{figure:deux}). 

In the above setting, we denote by $R_c$ (respectively $L_c$) the set $\{i_1,\dots, i_k\}$ (respectively $\{i_k,\dots, i_{n+1}, i_1\}$). In particular $L_{c}\cup R_c=[n+1]$, $L_{c}\cap R_c=\{1, n+1\}$. Notice that $R_{c_{\mathrm{lin}}}=[n+1]$, $L_{c_{\mathrm{lin}}}=\{1,n+1\}$. 

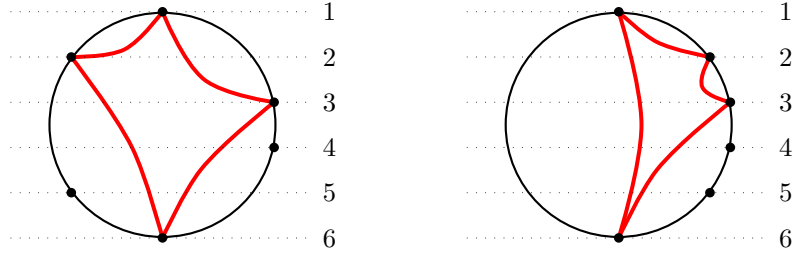
\begin{figure}[h!]
\psscalebox{1}{
\begin{pspicture}(6.3,0)(6,3)
\pscircle(3,1.5){1.5}
\pscurve[linecolor=red, linewidth=1.5pt](3,3)(2.5, 2.5)(1.8, 2.4)
\pscurve[linecolor=red, linewidth=1.5pt](3,3)(3.55, 2.1)(4.4696, 1.8)
\pscurve[linecolor=red, linewidth=1.5pt](3,0)(2.6,1.2)(1.8, 2.4)
\pscurve[linecolor=red, linewidth=1.5pt](3,0)(3.5,0.9)(4.4696, 1.8)

\psline[linestyle=dotted, linewidth=0.4pt](1,3)(5,3)
\psline[linestyle=dotted, linewidth=0.4pt](1,2.4)(5,2.4)
\psline[linestyle=dotted, linewidth=0.4pt](1,1.8)(5,1.8)
\psline[linestyle=dotted, linewidth=0.4pt](1,1.2)(5,1.2)
\psline[linestyle=dotted, linewidth=0.4pt](1,0.6)(5,0.6)
\psline[linestyle=dotted, linewidth=0.4pt](1,0)(5,0)

\psdots(3,3)(1.8, 2.4)(4.4696, 1.8)(4.4696, 1.2)(1.8, 0.6)(3,0)
\rput(5.2,3){\small $1$}
\rput(5.2,2.4){\small $2$}
\rput(5.2,1.8){\small $3$}
\rput(5.2,1.2){\small $4$}
\rput(5.2,0.6){\small $5$}
\rput(5.2,0){\small $6$}
\pscircle(9,1.5){1.5}
\pscurve[linecolor=red, linewidth=1.5pt](9,3)(9.5, 2.6)(10.2, 2.4)
\pscurve[linecolor=red, linewidth=1.5pt](10.2, 2.4)(10.1,2)(10.4696, 1.8)
\pscurve[linecolor=red, linewidth=1.5pt](10.4696, 1.8)(9.5,0.9)(9,0)
\pscurve[linecolor=red, linewidth=1.5pt](9,3)(9.3,1.5)(9,0)

\psline[linestyle=dotted, linewidth=0.4pt](7,3)(11,3)
\psline[linestyle=dotted, linewidth=0.4pt](7,2.4)(11,2.4)
\psline[linestyle=dotted, linewidth=0.4pt](7,1.8)(11,1.8)
\psline[linestyle=dotted, linewidth=0.4pt](7,1.2)(11,1.2)
\psline[linestyle=dotted, linewidth=0.4pt](7,0.6)(11,0.6)
\psline[linestyle=dotted, linewidth=0.4pt](7,0)(11,0)

\rput(11.2,3){\small $1$}
\rput(11.2,2.4){\small $2$}
\rput(11.2,1.8){\small $3$}
\rput(11.2,1.2){\small $4$}
\rput(11.2,0.6){\small $5$}
\rput(11.2,0){\small $6$}

\psdots(9,3)(10.2, 2.4)(10.4696, 1.8)(10.4696, 1.2)(10.2, 0.6)(9,0)\end{pspicture}}
\caption{Conventions on the graphical representations of $c$-noncrossing partitions. On the left we have $(1,3,6,2)\in\NC(\mathfrak{S}_6, s_2s_1s_3s_5s_4)$, on the right we have $(1,2,3,6)\in\NC(\mathfrak{S}_6,c_{\mathrm{lin}})$.}
\label{figure:deux} 
\end{figure}

\begin{rmq}\label{rmq:successifs}
For any $k\in\{1,\dots, n+1\}$ we denote by $\mathcal{L}_k$ the horizontal line containing the point $k$ and cutting the circle with $c$-labeling for $c\in\Std(\Sn)$. The set of points on the circle with labels in $E_{\leq k}:=\{i\in[n+1]~|~i\leq k\}$ lies in the upper half-plane defined by that line and the set of points with label in $E_{\geq k}:=\{i\in[n+1]~|~i\geq k\}$ lies in the lower half-plane. This will be useful for some of the proofs. 
\end{rmq}

\noindent To summarize, there are bijections 
$$\left\{\begin{array}{cccccc}\text{Standard}\\ \text{Coxeter}\\ \text{elements}\end{array}\right\} \overset{\sim}{\longrightarrow}  \left\{\begin{array}{cccccc} \text{Orientations}\\ \text{of the}\\ \text{Dynkin diagram} \end{array}\right\}
 \overset{\sim}{\longrightarrow}  \left\{\begin{array}{cccccc} \text{Partitions of }\\
 \text{$\{2,\dots,n\}$ into two}\\ \text{ordered sets $(C_1, C_2)$}  
\end{array}\right\}
$$
where by convention $C_1$ and $C_2$ are defined by $L_{c}=C_1\overset{\cdot}{\cup}\{1,n+1\}$, $R_{c}=C_2\overset{\cdot}{\cup} \{1,n+1\}$. 

Note that the first bijection above exists more generally for Coxeter groups whose Dynkin diagram is a tree (see \cite[Theorem 1.5]{Shi}). 

Let $c\in\Std(\Sn)$, $x\in\NC(\Sn,c)$. As in \ref{sec:combin}, we denote by $\Pol(x)$ the set of polygons appearing in the graphical representation of $x$. We denote by $\Vert(P)$ the set of integers indexing the vertices of $P\in\Pol(x)$. Notice that $\supp(x)=\bigcup_{P\in\Pol(x)} \Vert(P)$.

Let $P\in\Pol(x)$ with $\Vert(P)=\{d_1< d_2 <\cdots< d_k\}$. We say that $d_1$ is the \defn{initial} index of $P$ or an initial index of $x$. We say that $d_k$ is the \defn{terminal} index of $P$ or a terminal index of $x$. If we do not want to write down the whole set $\Vert(P)$ we simply denote by $\min P$ the initial index of $P$ and by $\max P$ the terminal one. We say that $\ell\in\{2,\dots, n\}$ is \defn{nested} in $P\in\Pol(x)$ if $\ell\notin \Vert(P)$ but $\min P<\ell<\max P$. Notice that it does not imply that $\ell\notin\supp(x)$ since one may have $\ell\in \Vert(Q)$ for $P\neq Q\in\Pol(x)$. As in \cite[Definition 8.8]{GobWil}, we set 
$$U_x^c:=\supp(x)\setminus \{\mbox{initial vertices}\},~ D_x^c:=\supp(x)\setminus \{\mbox{terminal vertices}\}.$$
We have $|D_x^{c}|=|U_x^{c}|$.

\begin{exple}
Let $x$ be as in Figure \ref{figure:nonc}. Then the point $5$ is nested in $P_2$, but not in $P_1$. We have $D_x^c=\{1,2,3\}$, $U_x^c=\{3,4,6\}$. 
\end{exple}


\section{Bijections with other Catalan enumerated objects}

In this section, we define two bijections between $\NC(\Sn,c)$ and fully commutative permutations of the symmetric group (see below for definitions). One of the introduced bijections will turn out to come from a triangular base-change in the Temperley-Lieb algebra (the sets of noncrossing partitions and fully commutative elements turn out to index bases of this algebra). We nevertheless treat everything combinatorially in this section, independently of the Temperley-Lieb algebra.

Firstly, we introduce a set of pairs of integral sequences which also has Catalan enumeration. Our bijections will then be built via intermediate bijections between the two above mentioned sets and this set of sequences.
\subsection{Pairs of integral sequences}

\begin{nota}
Let $k\in\mathbb{Z}_{\geq 0}$. We denote by $\mathcal{I}_k$ the set of pairs $(D, U)$ of subsets of $[n+1]$ where $D=\{d_1, d_2,\dots, d_k\}$, $U=\{u_1, u_2,\dots, u_k\}$, $d_i<d_{i+1}$, $u_i<u_{i+1}$ for each $1\leq i<k$ and $d_i< u_i$ for each $1\leq i\leq k$. Set $\mathcal{I}(n):=\coprod_{k=0}^n \mathcal{I}_k$. Notice that $\mathcal{I}(n)$ is item $107$ in \cite{Stanley}. In particular it is known to have enumeration $C_{n+1}$.
\end{nota}
\noindent Note that for any $c\in\Std(\Sn)$ and $x\in\NC(\Sn,c)$, we have $(D_x^c, U_x^c)\in\In$. 
   
\subsection{Bijection $\#$1}\label{sub:bij1}
In \cite[Proposition 2.11]{Gob} and \cite[Proposition 5.2]{GobWil}, it is shown that the map $$\varepsilon:\NC(\Sn, c_{\mathrm{lin}}){\longrightarrow}\mathcal{I}(n),~x\mapsto (D_x^{c_{\mathrm{lin}}}, U_x^{c_{\mathrm{lin}}})$$ is a bijection.

The next result is an immediate consequence of \cite{GobWil}. It is a generalization of the bijection $\varepsilon$ to arbitrary Coxeter elements. 
\begin{proposition}\label{prop:gobwilbij}
The map $\psi_1:\NC(\Sn, c)\longrightarrow \mathcal{I}(n)$, $x\mapsto (D_x^c, U_x^c)$ is well-defined and bijective. 
\end{proposition}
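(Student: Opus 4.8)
The plan is to leverage the bijectivity of $\varepsilon$ (the linear case $c=\clin$) together with the conjugation relation $c=w\clin w^{-1}$ described in Subsection~\ref{sec:conv}. The key observation is that conjugation by $w$ gives an order-preserving bijection $u\mapsto wuw^{-1}$ from $\NC(\Sn,\clin)$ to $\NC(\Sn,c)$, since conjugation preserves $\leqt$. Thus $\psi_1$ will be a bijection precisely if it agrees, up to this conjugation, with $\varepsilon$. Concretely, I would first verify that $\psi_1$ is well-defined, i.e.\ that $(D_x^c,U_x^c)\in\In$ for every $x$; but this is already noted in the excerpt (the remark following the definition of $\In$, together with $|D_x^c|=|U_x^c|$), so the content is entirely the bijectivity.

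The main step is to show that $\psi_1$ is a bijection by transporting $\varepsilon$ along the conjugation $w$. The cleanest route is to argue that the diagram
\begin{equation*}
\begin{array}{ccc}
\NC(\Sn,\clin) & \xrightarrow{\ \varepsilon\ } & \In\\[4pt]
\downarrow{\scriptstyle\,w(-)w^{-1}} & & \parallel\\[4pt]
\NC(\Sn,c) & \xrightarrow{\ \psi_1\ } & \In
\end{array}
\end{equation*}
commutes, i.e.\ $\psi_1(wuw^{-1})=\varepsilon(u)$ for all $u\in\NC(\Sn,\clin)$. Since the left vertical map is a bijection and $\varepsilon$ is a bijection by \cite{Gob,GobWil}, commutativity immediately forces $\psi_1$ to be a bijection. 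To establish commutativity I would compare the polygon structure of $u$ drawn on the $\clin$-labeling with that of $wuw^{-1}$ drawn on the $c$-labeling: by construction the $c$-labeling is obtained from the $\clin$-labeling precisely by relabeling vertex $j$ as $i_j=w(j)$, so the graphical representation of $wuw^{-1}$ on the $c$-labeled circle is the image of the graphical representation of $u$ on the $\clin$-labeled circle. Hence the two diagrams have literally the same polygons sitting at the same geometric positions, and in particular the same sets of initial, terminal, and support vertices \emph{as abstract positions}; when these positions are read off through the fixed height function $1,2,\dots,n+1$ (top to bottom, as in Figure~\ref{figure:deux}), the resulting pair $(D,U)$ is unchanged. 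This identifies $\psi_1(wuw^{-1})$ with $\varepsilon(u)$.

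The subtle point — and the step I expect to be the main obstacle — is keeping the two notions of ``initial/terminal index'' properly aligned. In the definition of $D_x^c,U_x^c$ the initial and terminal vertices of each polygon are determined by the numerical order on $[n+1]$ (via $\min P<\max P$), \emph{not} by the cyclic order of the $c$-labeling around the circle. So I must check that relabeling by $w$ sends the numerically-initial vertex of a polygon of $u$ to the numerically-initial vertex of the corresponding polygon of $wuw^{-1}$. This is exactly what the height convention of Subsection~\ref{sec:conv} guarantees: a vertex is placed at height equal to its label, independently of $c$, so ``topmost point of a polygon'' means ``smallest label'' in both pictures. Once this compatibility between the geometric height order and the numerical order is made explicit, commutativity of the diagram is immediate and the proof concludes. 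Alternatively, if the direct diagram-chase feels delicate, I would instead cite the precise statement of \cite[Definition~8.8 and the surrounding results]{GobWil}, where the assignment $x\mapsto(D_x^c,U_x^c)$ and its bijectivity for general $c$ are essentially established; the proposition is then an immediate reformulation, which is consistent with the paper's assertion that it is ``an immediate consequence of \cite{GobWil}.''
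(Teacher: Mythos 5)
Your main argument has a genuine error: the square you propose does not commute. Conjugation by $w$ is indeed an order-preserving bijection $\NC(\Sn,\clin)\to\NC(\Sn,c)$, but it does \emph{not} preserve the pair $(D,U)$. The sets $D_x^c$ and $U_x^c$ are computed from the numerical labels of the vertices of the polygons, and conjugation relabels: it keeps the polygons at the same geometric positions on the equally-spaced circle but replaces the label $j$ there by $i_j=w(j)$, so in the height picture (where a vertex sits at the height given by its label) the vertices actually move. Concretely, take $c=s_2s_1s_3=(1,3,4,2)\in\Std(\mathfrak{S}_4)$, so that $w=(2,3,4)$, and $u=(1,3)\in\NC(\mathfrak{S}_4,\clin)$. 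Then $\varepsilon(u)=(\{1\},\{3\})$, while $wuw^{-1}=(1,4)$ and $\psi_1(wuw^{-1})=(\{1\},\{4\})$; both values can be read off from Figure~\ref{fig:difftes}. Hence $\psi_1(wuw^{-1})\neq\varepsilon(u)$. The flaw in your ``compatibility'' step is that the smallest label of a polygon of $wuw^{-1}$ is $\min_j i_{a_j}$, which is in general neither the image $i_{a_1}$ of the smallest label of the corresponding polygon of $u$ nor that smallest label $a_1$ itself; ``same positions'' and ``same labels'' cannot both hold under a relabeling, and $(D,U)$ is read from labels. Indeed, the paper's Remark immediately following Proposition~\ref{prop:gobwilbij} makes exactly this point: composing conjugation with $\varepsilon$ yields bijections $\varphi_w(x)=\varepsilon(w^{-1}xw)$ which depend on the choice of $w$ and are in general different from $\psi_1$ --- which could not happen if your diagram commuted.

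The paper's actual proof consists of invoking \cite[Theorem 1.3]{GobWil}: there exists a (unique) bijection $f:\NC(\Sn,c)\to\NC(\Sn,\clin)$ satisfying $(D^{\clin}_{f(x)},U^{\clin}_{f(x)})=(D^c_x,U^c_x)$, and then $\psi_1=\varepsilon\circ f$ is a bijection. This $f$ is emphatically not conjugation; its existence is the nontrivial content being cited. So your fallback option (citing \cite{GobWil}) is the correct route, but it is the entire proof rather than an optional shortcut, and the ``direct diagram-chase'' it would replace is not merely delicate but false.
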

\begin{proof}
By \cite[Theorem 1.3]{GobWil}, there exists a unique bijection $$f:\NC(\Sn,c)\longrightarrow \NC(\Sn,c_{\mathrm{lin}})$$
such that $(D_{f(x)}^{c_{\mathrm{lin}}}, U_{f(x)}^{c_{\mathrm{lin}}})=(D_x^c, U_x^c)$. We get the result by combining this with the fact that the map $\varepsilon$ above is bijective. 
\end{proof}

\begin{rmq}
We mention another way of using the bijection $\varepsilon$ to produce bijections $\NC(\Sn,c)\longrightarrow \mathcal{I}(n)$. Since for any $c,c'\in\Std(\Sn)$ there are bijections $$\NC(\Sn,c)\overset{\sim}{\longrightarrow}\NC(\Sn,c')$$ given by conjugation (see the second paragraph of Subsection \ref{sec:conv}), by composition we obtain for free a bijection $\NC(\Sn, c)\overset{\sim}{\longrightarrow}\mathcal{I}(n)$ for any $c\in\Std(\Sn)$. More precisely, if $c\in\Std(\Sn)$ and $w\in\Sn$ are such that $w^{-1}cw=c_{\mathrm{lin}}$, then a bijection $\varphi_{w}$ is given by $$x\mapsto \varepsilon(w^{-1} x w).$$
Note that it depends on the choice of $w$ which is not necessarily unique, giving rise to several such bijections.
\end{rmq}

\subsection{An involution on the set of noncrossing partitions}\label{sub:involution}
The aim of this subsection is to define an involution $\inv:\NC(\Sn,c)\longrightarrow\NC(\Sn,c)$. To this end we associate to a noncrossing partition two additional sets of integers.

\begin{nota} Recall that $L_c$ is the set of elements of $[n+1]$ which label the left part of the circle. Set $M_x^{c}:=D_x^{c}\cap U_x^{c}\cap L_{c}$ and write $N_x^{c}$ for the set of indices which lie in $L_{c}$ but not in $\supp(x)$ and are nested in at least one polygon of $x$. In particular $M_x^{c}\cap N_x^{c}=\emptyset$ since elements of $M_x^c$ lie in $\supp(x)$ while elements of $N_x^c$ do not. 
\end{nota}

In other words, $M_x^c$ is the set of non-extremal indices of polygons of $x$ lying on the left part of the circle, while $N_x^c$ is the set of indices of the left part which are not vertices of polygons, but which are nested in at least one polygon of $x$. 

\begin{lemma}\label{lem:enlarge}
Let $x\in\NC(\Sn,c)$, $k\in N_x^c$. Among all the polygons in $\Pol(x)$ in which $k$ is nested, there is a unique one, say $P$, with the property that $P$ can be enlarged in a polygon $P'$ by adding to it the vertex $k$, and such that the resulting diagram with $P$ replaced by $P'$ stays noncrossing. 
\end{lemma}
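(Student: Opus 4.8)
The plan is to work in the geometric model of Subsection~\ref{sec:conv}, where a noncrossing partition is a family of pairwise disjoint convex hulls inscribed in the circle with $c$-labeling, and ``stays noncrossing'' literally means the hulls remain pairwise disjoint. Since $k\in L_c\setminus\{1,n+1\}$, Remark~\ref{rmq:successifs} shows that $k$ is the unique labeled point on the horizontal line $\mathcal{L}_k$ and lies at its leftmost end inside the disk. A polygon $P$ nests $k$ exactly when $\min P<k<\max P$, i.e.\ when $\mathrm{conv}(P)$ has vertices both above and below $\mathcal{L}_k$; equivalently $\mathrm{conv}(P)$ meets $\mathcal{L}_k$ in a genuine segment $I_P$. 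As the hulls are disjoint, the segments $I_P$, over the polygons $P$ nesting $k$, are pairwise disjoint and all lie strictly to the right of $k$ on $\mathcal{L}_k$; hence there is a unique such $P$, call it $P_0$, whose segment $I_{P_0}$ is closest to $k$. I will show that $P_0$ is the polygon claimed by the lemma.

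Uniqueness is the easy half. Let $P\neq P_0$ be another polygon nesting $k$ and let $P'=\mathrm{conv}(\Vert(P)\cup\{k\})$. Then $P'$ is convex and contains both $k$ and the whole segment $I_P\subseteq\mathrm{conv}(P)$, so it contains the portion of $\mathcal{L}_k$ joining $k$ to $I_P$. Because $I_{P_0}$ lies strictly between $k$ and $I_P$ on $\mathcal{L}_k$, this portion passes through $I_{P_0}\subseteq\mathrm{conv}(P_0)$, so $P'$ meets $\mathrm{conv}(P_0)$ and the enlarged diagram is not noncrossing. Thus no nested polygon other than $P_0$ can be enlarged.

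For existence I must check that $P_0'=\mathrm{conv}(\Vert(P_0)\cup\{k\})$ is disjoint from every other hull. Since all points sit on the circle, $k$ is inserted into the arc between two cyclically consecutive vertices $a,b$ of $P_0$, so passing from $P_0$ to $P_0'$ simply replaces the edge $ab$ by the triangular cap $T=\mathrm{conv}\{k,a,b\}$; it therefore suffices to show $T$ meets no other hull $\mathrm{conv}(Q)$. Here I would use the elementary fact that the convex hull of a set of points on a circle contains no other point of the circle than its own vertices; consequently $\mathrm{conv}(Q)$ meets $T$ only if the vertex sets of $Q$ and $\{k,a,b\}$ interleave cyclically. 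As $Q$ and $P_0$ are already noncrossing, $Q$ does not interleave with $\{a,b\}$, so any such interleaving forces $Q$ to have vertices on both sides of $k$ inside the arc from $a$ to $b$ cut off by the cap. The main point, and the step I expect to be the real obstacle, is to rule this out using the minimality of $P_0$: because $k$ is nested, the flanking vertices $a,b$ lie on opposite sides of $\mathcal{L}_k$, and tracking the labels along the two subarcs from $a$ to $k$ and from $k$ to $b$ shows that such a $Q$ would itself straddle $\mathcal{L}_k$, with its crossing segment lying strictly between $k$ and $I_{P_0}$. This contradicts the choice of $P_0$ as the nested polygon whose crossing is closest to $k$. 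Hence $T$ is empty, $P_0$ is enlargeable, and by the previous paragraph it is the unique nested polygon with this property.
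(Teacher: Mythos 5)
Your proof is correct and follows essentially the same route as the paper: both identify $P$ as the nested polygon whose intersection with the horizontal line $\mathcal{L}_k$ is closest to $k$, i.e.\ the first polygon crossing $\mathcal{L}_k$ from the left. The paper leaves the verification largely to its figure, whereas you supply the supporting details (the segment-containment argument for uniqueness, and the triangular-cap plus minimality argument for existence), which sharpens but does not change the argument.
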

\begin{proof}
The idea of the proof is illustrated in Figure \ref{fig:enlarge}. By definition of $N_x^c$, $k$ is on the left part of the circle and is nested in at least one polygon. Now our conventions on the graphical representation imply that $k$ is nested in $P\in\Pol(x)$ if and only if $P$ is such that the point $\min P$ (resp. the point $\max P$) is higher (resp. lower) than the point $k$. It implies that the polygons crossed by the horizontal line $\mathcal{L}_k$ of height $k$ (see Remark \ref{rmq:successifs}) are exactly those polygons in which $k$ is nested. The polygon $P$ we are looking for is necessarily the first one crossing $\mathcal{L}_k$ from the left. 
\end{proof}
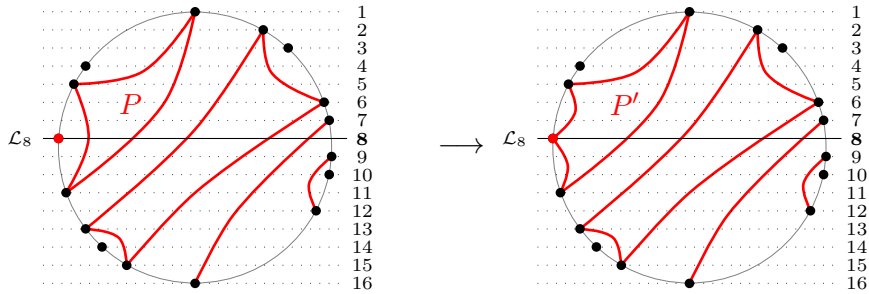
\begin{figure}[h!]
\begin{tabular}{ccc}
\begin{pspicture}(-2,-1.92)(2.5,1.92)
\pscircle[linecolor=gray, linewidth=0.2pt](0,0){1.8}

\pscurve[linewidth=1pt, linecolor=red](-1.59197,0.84)(-0.7,1)(0,1.8)
\pscurve[linewidth=1pt, linecolor=red](0,1.8)(-0.4,0.6)(-1.697,-0.6)
\pscurve[linewidth=1pt, linecolor=red](-1.59197,0.84)(-1.4,0.12)(-1.697,-0.6)

\pscurve[linewidth=1pt, linecolor=red](0.897,1.56)(1,1)(1.697,0.6)
\pscurve[linewidth=1pt, linecolor=red](1.697,0.6)(0, -0.6)(-0.897,-1.56)
\pscurve[linewidth=1pt, linecolor=red](-0.897,-1.56)(-1,-1.2)(-1.44,-1.08)
\pscurve[linewidth=1pt, linecolor=red](-1.44,-1.08)(0,0.24)(0.897,1.56)

\pscurve[linewidth=1pt, linecolor=red](1.7636,0.36)(0.5,-0.9)(0,-1.8)

\pscurve[linewidth=1pt, linecolor=red](1.796,-0.12)(1.5,-0.48)(1.59197,-0.84)




\psdots(0,1.8)(0.897,1.56)(1.2237,1.32)(-1.44,1.08)(-1.59197,0.84)(1.697,0.6)(1.7636,0.36)(-1.796,0.12)(1.796,-0.12)(1.7636,-0.36)(-1.697,-0.6)(1.59197,-0.84)(-1.44,-1.08)(-1.2237,-1.32)(-0.897,-1.56)(0,-1.8)

\psdot[linecolor=red, fillcolor=red](-1.796,0.12)
\rput(-2.3, 0.12){\tiny $\mathcal{L}_8$}

\psline[linestyle=dotted, linewidth=0.4pt](-2,1.8)(2,1.8)
\psline[linestyle=dotted, linewidth=0.4pt](-2,1.56)(2,1.56)
\psline[linestyle=dotted, linewidth=0.4pt](-2,1.32)(2,1.32)
\psline[linestyle=dotted, linewidth=0.4pt](-2,1.08)(2,1.08)
\psline[linestyle=dotted, linewidth=0.4pt](-2,0.84)(2,0.84)
\psline[linestyle=dotted, linewidth=0.4pt](-2,0.6)(2,0.6)
\psline[linestyle=dotted, linewidth=0.4pt](-2,0.36)(2,0.36)
\psline[linewidth=0.4pt](-2,0.12)(2,0.12)
\psline[linestyle=dotted, linewidth=0.4pt](-2,-1.8)(2,-1.8)
\psline[linestyle=dotted, linewidth=0.4pt](-2,-1.56)(2,-1.56)
\psline[linestyle=dotted, linewidth=0.4pt](-2,-1.32)(2,-1.32)
\psline[linestyle=dotted, linewidth=0.4pt](-2,-1.08)(2,-1.08)
\psline[linestyle=dotted, linewidth=0.4pt](-2,-0.84)(2,-0.84)
\psline[linestyle=dotted, linewidth=0.4pt](-2,-0.6)(2,-0.6)
\psline[linestyle=dotted, linewidth=0.4pt](-2,-0.36)(2,-0.36)
\psline[linestyle=dotted, linewidth=0.4pt](-2,-0.12)(2,-0.12)

\psdot[linecolor=red, fillcolor=red](-1.796,0.12)

\rput(-0.85,0.58){\red $P$}

\rput(2.2, 1.8){\tiny $1$}
\rput(2.2, 1.56){\tiny $2$}
\rput(2.2, 1.32){\tiny $3$}
\rput(2.2, 1.08){\tiny $4$}
\rput(2.2, 0.84){\tiny $5$}
\rput(2.2, 0.6){\tiny $6$}
\rput(2.2, 0.36){\tiny $7$}
\rput(2.2, 0.12){\tiny $\mathbf{8}$}
\rput(2.2, -0.12){\tiny $9$}
\rput(2.2, -0.36){\tiny $10$}
\rput(2.2, -0.6){\tiny $11$}
\rput(2.2, -0.84){\tiny $12$}
\rput(2.2, -1.08){\tiny $13$}
\rput(2.2, -1.32){\tiny $14$}
\rput(2.2, -1.56){\tiny $15$}
\rput(2.2, -1.8){\tiny $16$}
\end{pspicture}
& \begin{pspicture}(-0.65,-1.92)(0.65,1.92)
\rput(0,0){$~~\longrightarrow~~$}
\end{pspicture}
 &
\begin{pspicture}(-2,-1.92)(2.5,1.92)
\pscircle[linecolor=gray, linewidth=0.2pt](0,0){1.8}

\pscurve[linewidth=1pt, linecolor=red](-1.59197,0.84)(-0.7,1)(0,1.8)
\pscurve[linewidth=1pt, linecolor=red](0,1.8)(-0.4,0.6)(-1.697,-0.6)
\pscurve[linewidth=1pt, linecolor=red](-1.796,0.12)(-1.6,-0.24)(-1.697,-0.6)
\pscurve[linewidth=1pt, linecolor=red](-1.796,0.12)(-1.5,0.48)(-1.59197,0.84)

\pscurve[linewidth=1pt, linecolor=red](0.897,1.56)(1,1)(1.697,0.6)
\pscurve[linewidth=1pt, linecolor=red](1.697,0.6)(0, -0.6)(-0.897,-1.56)
\pscurve[linewidth=1pt, linecolor=red](-0.897,-1.56)(-1,-1.2)(-1.44,-1.08)
\pscurve[linewidth=1pt, linecolor=red](-1.44,-1.08)(0,0.24)(0.897,1.56)

\pscurve[linewidth=1pt, linecolor=red](1.7636,0.36)(0.5,-0.9)(0,-1.8)

\pscurve[linewidth=1pt, linecolor=red](1.796,-0.12)(1.5,-0.48)(1.59197,-0.84)




\psdots(0,1.8)(0.897,1.56)(1.2237,1.32)(-1.44,1.08)(-1.59197,0.84)(1.697,0.6)(1.7636,0.36)(-1.796,0.12)(1.796,-0.12)(1.7636,-0.36)(-1.697,-0.6)(1.59197,-0.84)(-1.44,-1.08)(-1.2237,-1.32)(-0.897,-1.56)(0,-1.8)

\psdot[linecolor=red, fillcolor=red](-1.796,0.12)
\rput(-2.3, 0.12){\tiny $\mathcal{L}_8$}

\psline[linestyle=dotted, linewidth=0.4pt](-2,1.8)(2,1.8)
\psline[linestyle=dotted, linewidth=0.4pt](-2,1.56)(2,1.56)
\psline[linestyle=dotted, linewidth=0.4pt](-2,1.32)(2,1.32)
\psline[linestyle=dotted, linewidth=0.4pt](-2,1.08)(2,1.08)
\psline[linestyle=dotted, linewidth=0.4pt](-2,0.84)(2,0.84)
\psline[linestyle=dotted, linewidth=0.4pt](-2,0.6)(2,0.6)
\psline[linestyle=dotted, linewidth=0.4pt](-2,0.36)(2,0.36)
\psline[linewidth=0.4pt](-2,0.12)(2,0.12)
\psline[linestyle=dotted, linewidth=0.4pt](-2,-1.8)(2,-1.8)
\psline[linestyle=dotted, linewidth=0.4pt](-2,-1.56)(2,-1.56)
\psline[linestyle=dotted, linewidth=0.4pt](-2,-1.32)(2,-1.32)
\psline[linestyle=dotted, linewidth=0.4pt](-2,-1.08)(2,-1.08)
\psline[linestyle=dotted, linewidth=0.4pt](-2,-0.84)(2,-0.84)
\psline[linestyle=dotted, linewidth=0.4pt](-2,-0.6)(2,-0.6)
\psline[linestyle=dotted, linewidth=0.4pt](-2,-0.36)(2,-0.36)
\psline[linestyle=dotted, linewidth=0.4pt](-2,-0.12)(2,-0.12)

\psdot[linecolor=red, fillcolor=red](-1.796,0.12)

\rput(-0.85,0.58){\red $P'$}

\rput(2.2, 1.8){\tiny $1$}
\rput(2.2, 1.56){\tiny $2$}
\rput(2.2, 1.32){\tiny $3$}
\rput(2.2, 1.08){\tiny $4$}
\rput(2.2, 0.84){\tiny $5$}
\rput(2.2, 0.6){\tiny $6$}
\rput(2.2, 0.36){\tiny $7$}
\rput(2.2, 0.12){\tiny $\mathbf{8}$}
\rput(2.2, -0.12){\tiny $9$}
\rput(2.2, -0.36){\tiny $10$}
\rput(2.2, -0.6){\tiny $11$}
\rput(2.2, -0.84){\tiny $12$}
\rput(2.2, -1.08){\tiny $13$}
\rput(2.2, -1.32){\tiny $14$}
\rput(2.2, -1.56){\tiny $15$}
\rput(2.2, -1.8){\tiny $16$}
\end{pspicture}
\end{tabular}
\caption{Picture for the proof of Lemma \ref{lem:enlarge}, with $k=8$.}
\label{fig:enlarge}
\end{figure}

The idea is then to enlarge the polygons of $x$ by adding all the vertices with labels in $N_x^c$ using Lemma \ref{lem:enlarge} inductively; such a process is well-defined since in the notations of Lemma \ref{lem:enlarge}, an index $i\neq k$ is nested in $P$ if and only if it is nested in $P'$; moreover, $P'$ is the leftmost polygon in which $i$ is nested in the noncrossing partition $x'$ (obtained by adding $k$ to $x$) if and only if $P$ is the leftmost polygon in which $i$ is nested in the graphical representation of $x$. In particular it follows easily that the order in which we add the various vertices with labels in $N_x^c$ does not affect the result. The diagram obtained after adding all the vertices with labels in $N_x^c$ is noncrossing, hence it represents a unique $x'\in\NC(\Sn,c)$. The first part of Figure \ref{fig:involution} gives an example of the map $x\mapsto x'$.

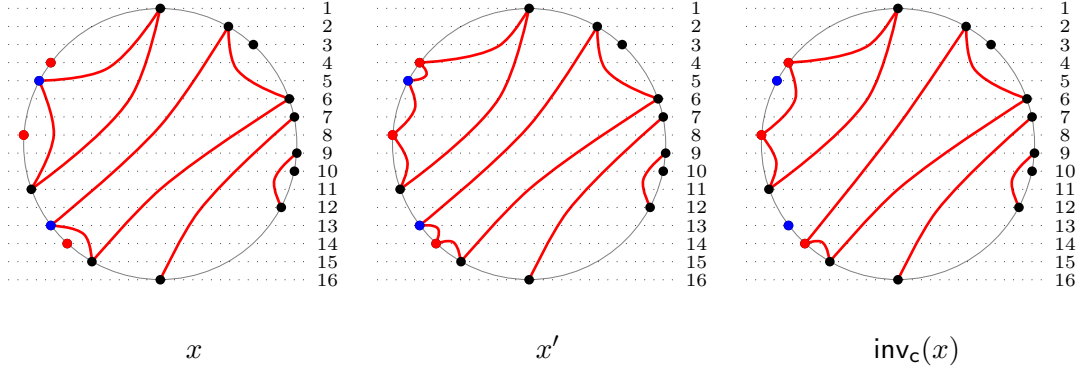
\begin{figure}[h!]
\begin{tabular}{ccc}
\begin{pspicture}(-1.6,-1.92)(2.5,1.92)
\pscircle[linecolor=gray, linewidth=0.2pt](0,0){1.8}

\pscurve[linewidth=1pt, linecolor=red](-1.59197,0.84)(-0.7,1)(0,1.8)
\pscurve[linewidth=1pt, linecolor=red](0,1.8)(-0.4,0.6)(-1.697,-0.6)
\pscurve[linewidth=1pt, linecolor=red](-1.59197,0.84)(-1.4,0.12)(-1.697,-0.6)

\pscurve[linewidth=1pt, linecolor=red](0.897,1.56)(1,1)(1.697,0.6)
\pscurve[linewidth=1pt, linecolor=red](1.697,0.6)(0, -0.6)(-0.897,-1.56)
\pscurve[linewidth=1pt, linecolor=red](-0.897,-1.56)(-1,-1.2)(-1.44,-1.08)
\pscurve[linewidth=1pt, linecolor=red](-1.44,-1.08)(0,0.24)(0.897,1.56)

\pscurve[linewidth=1pt, linecolor=red](1.7636,0.36)(0.5,-0.9)(0,-1.8)

\pscurve[linewidth=1pt, linecolor=red](1.796,-0.12)(1.5,-0.48)(1.59197,-0.84)




\psdots(0,1.8)(0.897,1.56)(1.2237,1.32)(-1.44,1.08)(-1.59197,0.84)(1.697,0.6)(1.7636,0.36)(-1.796,0.12)(1.796,-0.12)(1.7636,-0.36)(-1.697,-0.6)(1.59197,-0.84)(-1.44,-1.08)(-1.2237,-1.32)(-0.897,-1.56)(0,-1.8)

\psdot[linecolor=red, fillcolor=red](-1.796,0.12)

\psline[linestyle=dotted, linewidth=0.4pt](-2,1.8)(2,1.8)
\psline[linestyle=dotted, linewidth=0.4pt](-2,1.56)(2,1.56)
\psline[linestyle=dotted, linewidth=0.4pt](-2,1.32)(2,1.32)
\psline[linestyle=dotted, linewidth=0.4pt](-2,1.08)(2,1.08)
\psline[linestyle=dotted, linewidth=0.4pt](-2,0.84)(2,0.84)
\psline[linestyle=dotted, linewidth=0.4pt](-2,0.6)(2,0.6)
\psline[linestyle=dotted, linewidth=0.4pt](-2,0.36)(2,0.36)
\psline[linestyle=dotted, linewidth=0.4pt](-2,0.12)(2,0.12)
\psline[linestyle=dotted, linewidth=0.4pt](-2,-1.8)(2,-1.8)
\psline[linestyle=dotted, linewidth=0.4pt](-2,-1.56)(2,-1.56)
\psline[linestyle=dotted, linewidth=0.4pt](-2,-1.32)(2,-1.32)
\psline[linestyle=dotted, linewidth=0.4pt](-2,-1.08)(2,-1.08)
\psline[linestyle=dotted, linewidth=0.4pt](-2,-0.84)(2,-0.84)
\psline[linestyle=dotted, linewidth=0.4pt](-2,-0.6)(2,-0.6)
\psline[linestyle=dotted, linewidth=0.4pt](-2,-0.36)(2,-0.36)
\psline[linestyle=dotted, linewidth=0.4pt](-2,-0.12)(2,-0.12)

\psdot[linecolor=red, fillcolor=red](-1.796,0.12)
\psdot[linecolor=red, fillcolor=red](-1.44,1.08)
\psdot[linecolor=red, fillcolor=red](-1.2237,-1.32)

\psdot[linecolor=blue, fillcolor=blue](-1.59197,0.84)
\psdot[linecolor=blue, fillcolor=blue](-1.44,-1.08)

\rput(2.2, 1.8){\tiny $1$}
\rput(2.2, 1.56){\tiny $2$}
\rput(2.2, 1.32){\tiny $3$}
\rput(2.2, 1.08){\tiny $4$}
\rput(2.2, 0.84){\tiny $5$}
\rput(2.2, 0.6){\tiny $6$}
\rput(2.2, 0.36){\tiny $7$}
\rput(2.2, 0.12){\tiny $8$}
\rput(2.2, -0.12){\tiny $9$}
\rput(2.2, -0.36){\tiny $10$}
\rput(2.2, -0.6){\tiny $11$}
\rput(2.2, -0.84){\tiny $12$}
\rput(2.2, -1.08){\tiny $13$}
\rput(2.2, -1.32){\tiny $14$}
\rput(2.2, -1.56){\tiny $15$}
\rput(2.2, -1.8){\tiny $16$}
\end{pspicture}
& 
\begin{pspicture}(-2,-1.92)(2.5,1.92)
\pscircle[linecolor=gray, linewidth=0.2pt](0,0){1.8}

\pscurve[linewidth=1pt, linecolor=red](-1.59197,0.84)(-1.35,0.92)(-1.44,1.08)
\pscurve[linewidth=1pt, linecolor=red](-1.44,1.08)(-0.4,1.3)(0,1.8)
\pscurve[linewidth=1pt, linecolor=red](0,1.8)(-0.4,0.6)(-1.697,-0.6)
\pscurve[linewidth=1pt, linecolor=red](-1.796,0.12)(-1.6,-0.24)(-1.697,-0.6)
\pscurve[linewidth=1pt, linecolor=red](-1.796,0.12)(-1.5,0.48)(-1.59197,0.84)

\pscurve[linewidth=1pt, linecolor=red](0.897,1.56)(1,1)(1.697,0.6)
\pscurve[linewidth=1pt, linecolor=red](1.697,0.6)(0, -0.6)(-0.897,-1.56)
\pscurve[linewidth=1pt, linecolor=red](-0.897,-1.56)(-1, -1.3)(-1.2237,-1.32)
\pscurve[linewidth=1pt, linecolor=red](-1.2237,-1.32)(-1.2,-1.12)(-1.44,-1.08)
\pscurve[linewidth=1pt, linecolor=red](-1.44,-1.08)(0,0.24)(0.897,1.56)

\pscurve[linewidth=1pt, linecolor=red](1.7636,0.36)(0.5,-0.9)(0,-1.8)

\pscurve[linewidth=1pt, linecolor=red](1.796,-0.12)(1.5,-0.48)(1.59197,-0.84)




\psdots(0,1.8)(0.897,1.56)(1.2237,1.32)(-1.44,1.08)(-1.59197,0.84)(1.697,0.6)(1.7636,0.36)(-1.796,0.12)(1.796,-0.12)(1.7636,-0.36)(-1.697,-0.6)(1.59197,-0.84)(-1.44,-1.08)(-1.2237,-1.32)(-0.897,-1.56)(0,-1.8)

\psdot[linecolor=red, fillcolor=red](-1.796,0.12)

\psline[linestyle=dotted, linewidth=0.4pt](-2,1.8)(2,1.8)
\psline[linestyle=dotted, linewidth=0.4pt](-2,1.56)(2,1.56)
\psline[linestyle=dotted, linewidth=0.4pt](-2,1.32)(2,1.32)
\psline[linestyle=dotted, linewidth=0.4pt](-2,1.08)(2,1.08)
\psline[linestyle=dotted, linewidth=0.4pt](-2,0.84)(2,0.84)
\psline[linestyle=dotted, linewidth=0.4pt](-2,0.6)(2,0.6)
\psline[linestyle=dotted, linewidth=0.4pt](-2,0.36)(2,0.36)
\psline[linestyle=dotted, linewidth=0.4pt](-2,0.12)(2,0.12)
\psline[linestyle=dotted, linewidth=0.4pt](-2,-1.8)(2,-1.8)
\psline[linestyle=dotted, linewidth=0.4pt](-2,-1.56)(2,-1.56)
\psline[linestyle=dotted, linewidth=0.4pt](-2,-1.32)(2,-1.32)
\psline[linestyle=dotted, linewidth=0.4pt](-2,-1.08)(2,-1.08)
\psline[linestyle=dotted, linewidth=0.4pt](-2,-0.84)(2,-0.84)
\psline[linestyle=dotted, linewidth=0.4pt](-2,-0.6)(2,-0.6)
\psline[linestyle=dotted, linewidth=0.4pt](-2,-0.36)(2,-0.36)
\psline[linestyle=dotted, linewidth=0.4pt](-2,-0.12)(2,-0.12)

\psdot[linecolor=red, fillcolor=red](-1.796,0.12)
\psdot[linecolor=red, fillcolor=red](-1.44,1.08)
\psdot[linecolor=red, fillcolor=red](-1.2237,-1.32)

\psdot[linecolor=blue, fillcolor=blue](-1.59197,0.84)
\psdot[linecolor=blue, fillcolor=blue](-1.44,-1.08)

\rput(2.2, 1.8){\tiny $1$}
\rput(2.2, 1.56){\tiny $2$}
\rput(2.2, 1.32){\tiny $3$}
\rput(2.2, 1.08){\tiny $4$}
\rput(2.2, 0.84){\tiny $5$}
\rput(2.2, 0.6){\tiny $6$}
\rput(2.2, 0.36){\tiny $7$}
\rput(2.2, 0.12){\tiny $8$}
\rput(2.2, -0.12){\tiny $9$}
\rput(2.2, -0.36){\tiny $10$}
\rput(2.2, -0.6){\tiny $11$}
\rput(2.2, -0.84){\tiny $12$}
\rput(2.2, -1.08){\tiny $13$}
\rput(2.2, -1.32){\tiny $14$}
\rput(2.2, -1.56){\tiny $15$}
\rput(2.2, -1.8){\tiny $16$}
\end{pspicture}&
\begin{pspicture}(-2,-1.92)(2.5,1.92)
\pscircle[linecolor=gray, linewidth=0.2pt](0,0){1.8}

\pscurve[linewidth=1pt, linecolor=red](-1.44,1.08)(-0.4,1.3)(0,1.8)
\pscurve[linewidth=1pt, linecolor=red](0,1.8)(-0.4,0.6)(-1.697,-0.6)
\pscurve[linewidth=1pt, linecolor=red](-1.796,0.12)(-1.6,-0.24)(-1.697,-0.6)
\pscurve[linewidth=1pt, linecolor=red](-1.796,0.12)(-1.35,0.6)(-1.44,1.08)

\pscurve[linewidth=1pt, linecolor=red](0.897,1.56)(1,1)(1.697,0.6)
\pscurve[linewidth=1pt, linecolor=red](1.697,0.6)(0, -0.6)(-0.897,-1.56)
\pscurve[linewidth=1pt, linecolor=red](-0.897,-1.56)(-1, -1.3)(-1.2237,-1.32)
\pscurve[linewidth=1pt, linecolor=red](-1.2237,-1.32)(0,0.24)(0.897,1.56)

\pscurve[linewidth=1pt, linecolor=red](1.7636,0.36)(0.5,-0.9)(0,-1.8)

\pscurve[linewidth=1pt, linecolor=red](1.796,-0.12)(1.5,-0.48)(1.59197,-0.84)




\psdots(0,1.8)(0.897,1.56)(1.2237,1.32)(-1.44,1.08)(-1.59197,0.84)(1.697,0.6)(1.7636,0.36)(-1.796,0.12)(1.796,-0.12)(1.7636,-0.36)(-1.697,-0.6)(1.59197,-0.84)(-1.44,-1.08)(-1.2237,-1.32)(-0.897,-1.56)(0,-1.8)

\psdot[linecolor=red, fillcolor=red](-1.796,0.12)

\psline[linestyle=dotted, linewidth=0.4pt](-2,1.8)(2,1.8)
\psline[linestyle=dotted, linewidth=0.4pt](-2,1.56)(2,1.56)
\psline[linestyle=dotted, linewidth=0.4pt](-2,1.32)(2,1.32)
\psline[linestyle=dotted, linewidth=0.4pt](-2,1.08)(2,1.08)
\psline[linestyle=dotted, linewidth=0.4pt](-2,0.84)(2,0.84)
\psline[linestyle=dotted, linewidth=0.4pt](-2,0.6)(2,0.6)
\psline[linestyle=dotted, linewidth=0.4pt](-2,0.36)(2,0.36)
\psline[linestyle=dotted, linewidth=0.4pt](-2,0.12)(2,0.12)
\psline[linestyle=dotted, linewidth=0.4pt](-2,-1.8)(2,-1.8)
\psline[linestyle=dotted, linewidth=0.4pt](-2,-1.56)(2,-1.56)
\psline[linestyle=dotted, linewidth=0.4pt](-2,-1.32)(2,-1.32)
\psline[linestyle=dotted, linewidth=0.4pt](-2,-1.08)(2,-1.08)
\psline[linestyle=dotted, linewidth=0.4pt](-2,-0.84)(2,-0.84)
\psline[linestyle=dotted, linewidth=0.4pt](-2,-0.6)(2,-0.6)
\psline[linestyle=dotted, linewidth=0.4pt](-2,-0.36)(2,-0.36)
\psline[linestyle=dotted, linewidth=0.4pt](-2,-0.12)(2,-0.12)

\psdot[linecolor=red, fillcolor=red](-1.796,0.12)
\psdot[linecolor=red, fillcolor=red](-1.44,1.08)
\psdot[linecolor=red, fillcolor=red](-1.2237,-1.32)

\psdot[linecolor=blue, fillcolor=blue](-1.59197,0.84)
\psdot[linecolor=blue, fillcolor=blue](-1.44,-1.08)

\rput(2.2, 1.8){\tiny $1$}
\rput(2.2, 1.56){\tiny $2$}
\rput(2.2, 1.32){\tiny $3$}
\rput(2.2, 1.08){\tiny $4$}
\rput(2.2, 0.84){\tiny $5$}
\rput(2.2, 0.6){\tiny $6$}
\rput(2.2, 0.36){\tiny $7$}
\rput(2.2, 0.12){\tiny $8$}
\rput(2.2, -0.12){\tiny $9$}
\rput(2.2, -0.36){\tiny $10$}
\rput(2.2, -0.6){\tiny $11$}
\rput(2.2, -0.84){\tiny $12$}
\rput(2.2, -1.08){\tiny $13$}
\rput(2.2, -1.32){\tiny $14$}
\rput(2.2, -1.56){\tiny $15$}
\rput(2.2, -1.8){\tiny $16$}
\end{pspicture}\\
& &\\
 $x$ & $x'$ & $\inv(x)$\\
\end{tabular}
\caption{The involution $x\mapsto\inv(x)$. We have $M_x^c=\{5, 13\}$, $N_x^c=\{4,8,14\}$.}
\label{fig:involution}
\end{figure}

We then consider the noncrossing partition $\inv(x)$ obtained from $x'$ by removing from any polygon $P\in\Pol(x')$ the vertices with index in $M_x^{c}$, that is, the non extremal indices of polygons of $x$ which moreover lie in $L_{c}$. These indices become nested in polygons of $\inv(x)$ and they do not lie in $\Vert(\inv({x}))$. See Figure~\ref{fig:involution} for an example. In fact, by construction we have $$N_{\inv(x)}^{c}=M_x^{c}\text{ and }M_{\inv(x)}^{c}=N_x^{c}.$$ We therefore get

\begin{lemma}\label{lem:invol}
The map $\inv:\NC(\Sn,c)\longrightarrow\NC(\Sn,c)$ is an involution such that for all $x\in\NC(\Sn,c)$ we have $$(D_{\inv(x)}^{c}, U_{\inv(x)}^{c})=((D_{x}^{c}\backslash M_x^{c})\cup N_x^{c}, (U_{x}^{c}\backslash M_x^{c})\cup N_x^{c}).$$
Moreover $\inv$ preserves both sets of initial and terminal indices and we have $|\Pol(x)|=|\Pol(\inv(x))|$.
\end{lemma}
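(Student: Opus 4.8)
The plan is to derive the whole statement from one structural observation: both the enlargement step (adding the vertices of $N_x^c$) and the deletion step (removing the vertices of $M_x^c$) only ever touch \emph{non-extremal} vertices of polygons. Indeed, each $k\in N_x^c$ is added to a polygon in which it is nested, so $\min P<k<\max P$ strictly and $k$ becomes neither a new initial nor a new terminal index; dually, each $m\in M_x^c$ is by definition a non-extremal vertex, and it stays non-extremal after enlargement since enlargement does not move any $\min P$ or $\max P$. Hence, for every polygon the pair $(\min,\max)$ is unchanged throughout the construction. Since removing an interior vertex from a block of a noncrossing partition keeps it noncrossing, combining this with Lemma~\ref{lem:enlarge} shows $\inv(x)\in\NC(\Sn,c)$ is well defined. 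Moreover we obtain a span-preserving bijection $\Pol(x)\to\Pol(\inv(x))$, $P\mapsto P''$, with $\min P''=\min P$ and $\max P''=\max P$; in particular $|\Pol(x)|=|\Pol(\inv(x))|$, the sets of initial indices coincide, the sets of terminal indices coincide, and an index lies in the span $(\min P,\max P)$ of some $P\in\Pol(x)$ if and only if it lies in the span of the corresponding polygon of $\inv(x)$.

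Next I would compute the supports. The only vertex changes from $x$ to $\inv(x)$ are the addition of $N_x^c$ and the removal of $M_x^c$; these are disjoint, an added vertex is never afterwards removed and vice versa, so $\supp(\inv(x))=(\supp(x)\setminus M_x^c)\cup N_x^c$. Combining this with the coincidence of initial/terminal indices, with $M_x^c\subseteq U_x^c\cap D_x^c$ (the elements of $M_x^c$ are non-extremal vertices of $x$), and with the fact that the added vertices $N_x^c$ are non-extremal in $\inv(x)$ (hence lie in $U_{\inv(x)}^c\cap D_{\inv(x)}^c$), a direct set computation gives $U_{\inv(x)}^c=(U_x^c\setminus M_x^c)\cup N_x^c$ and $D_{\inv(x)}^c=(D_x^c\setminus M_x^c)\cup N_x^c$, as asserted. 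The same bookkeeping yields the two identities $N_{\inv(x)}^c=M_x^c$ and $M_{\inv(x)}^c=N_x^c$: each $m\in M_x^c$ lies in $L_c$, has left the support, and still lies in the span of $P''$, hence is nested, so $M_x^c\subseteq N_{\inv(x)}^c$; conversely, if $\ell\in N_{\inv(x)}^c$ then $\ell\in L_c$, $\ell\notin\supp(\inv(x))$ and $\ell$ lies in a span of $\inv(x)$, while the support formula forces $\ell\notin N_x^c$ and ($\ell\notin\supp(x)$ or $\ell\in M_x^c$); since span-preservation places $\ell$ in a span of $x$, the alternative $\ell\notin\supp(x)$ would give $\ell\in N_x^c$, a contradiction, whence $\ell\in M_x^c$. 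The identity $M_{\inv(x)}^c=N_x^c$ follows symmetrically.

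For the involution I would first reformulate Lemma~\ref{lem:enlarge} combinatorially: the polygons whose span contains a fixed point $k$ are totally ordered by span-inclusion, because two distinct blocks of a noncrossing partition have spans that are either nested or disjoint, and disjointness is impossible when both contain $k$. The unique enlargeable polygon at $k$ (``the first one crossing $\mathcal{L}_k$ from the left'') is then the one of \emph{minimal} span among those whose span contains $k$, and this characterization is manifestly preserved by any span-preserving bijection of polygons. Now set $z:=\inv(x)$, so $\inv(z)$ first adds $N_z^c=M_x^c$ and then removes $M_z^c=N_x^c$. Tracking one polygon $P$ of $x$, write $A_P:=\{k\in N_x^c:P\text{ is enlarged by }k\}$ and $B_P:=\Vert(P)\cap M_x^c$, so its image $P''$ in $\inv(x)$ has vertex set $(\Vert(P)\setminus B_P)\cup A_P$. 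Each $m\in B_P$ is re-added during $\inv(z)$ to the minimal-span polygon of $z$ whose span contains $m$, which by span-preservation is exactly $P''$; thus $P''$ regains precisely $B_P$, and the final removal of $N_x^c$ strips off precisely $A_P$, returning the vertex set to $\Vert(P)$. Doing this for all polygons gives $\inv(\inv(x))=x$.

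The main obstacle is this last point: ensuring that when the vertices of $M_x^c$ are re-inserted into $\inv(x)$, each $m$ returns to the image of the polygon of $x$ from which it was removed, rather than to some other polygon whose span also contains $m$. This is exactly what the minimal-span characterization secures, together with span-preservation: in $x$ the polygon $P$ with $m\in\Vert(P)$ is automatically the minimal-span polygon whose span contains $m$ (no block can be nested strictly between $m$ and $P$, since any block nested in $P$ lives in an open gap of $P$, and no gap contains the vertex $m$), and this minimality is transported verbatim to $\inv(x)$ by the bijection $P\mapsto P''$.
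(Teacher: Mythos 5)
Your first two paragraphs are essentially sound and amount to the same bookkeeping the paper performs ``by construction'': every added vertex (an element of $N_x^c$) and every deleted vertex (an element of $M_x^c$) lies strictly between the $\min$ and the $\max$ of its polygon, so one gets a span-preserving bijection $\Pol(x)\to\Pol(\inv(x))$, and from it the support formula, the formula for $(D_{\inv(x)}^{c},U_{\inv(x)}^{c})$, the exchange $N_{\inv(x)}^{c}=M_x^{c}$, $M_{\inv(x)}^{c}=N_x^{c}$, and the preservation of initial/terminal indices and of $|\Pol(x)|$. The genuine gap is in your third paragraph, i.e.\ exactly in the proof that $\inv\circ\inv=\mathrm{id}$. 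It rests on the claim that two blocks of a $c$-noncrossing partition have integer spans $[\min P,\max P]$ that are either nested or disjoint, hence that the polygons whose span contains a fixed $k$ are totally ordered by span inclusion and that the polygon singled out by Lemma~\ref{lem:enlarge} is the one of minimal span. That dichotomy is a feature of the linear Coxeter element only: noncrossingness here is relative to the $c$-labeling of the circle, not to the linear order, and for general $c$ the claim is false. The paper's own Figure~\ref{fig:enlarge} is a counterexample: the polygons crossing $\mathcal{L}_8$ are $\{1,5,11\}$, $\{2,6,13,15\}$ and $\{7,16\}$, whose spans $[1,11]$, $[2,15]$, $[7,16]$ are pairwise neither nested nor disjoint, so ``minimal span'' is not even well defined; and any substitute such as ``shortest span'' picks $[7,16]$, whereas the polygon that actually gets enlarged by $8$ is $\{1,5,11\}$. (A minimal example: $c=(1,2,5,6,4,3)\in\Std(\mathfrak{S}_6)$, $x=(1,4)(2,5)$; the point $k=3$ is nested in both blocks and the spans $[1,4]$, $[2,5]$ interleave.)

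Consequently the key step ``each $m\in B_P$ is re-added to the minimal-span polygon of $z$ whose span contains $m$, which by span-preservation is exactly $P''$\,'' does not go through, and the same objection defeats your last paragraph: in Figure~\ref{fig:enlarge}, $m=5$ is a vertex of $\{1,5,11\}$, yet the span $[2,15]$ of $\{2,6,13,15\}$ also contains $5$ and is incomparable with $[1,11]$, so the polygon containing $m$ is not characterized by span minimality. What Lemma~\ref{lem:enlarge} actually singles out is the \emph{leftmost} polygon crossing the horizontal line $\mathcal{L}_m$, and that is the invariant you must transport: (i) in $x$, the polygon $P$ with $m\in\Vert(P)$ is leftmost among the polygons meeting $\mathcal{L}_m$, because $m\in L_c$ lies on the left boundary of the circle; (ii) this leftmost-ness survives all additions and deletions, because those operations only enlarge or shrink the (pairwise disjoint) intervals in which the polygons meet a fixed horizontal line, and disjoint intervals that grow or shrink cannot exchange their left-to-right order --- this is the kind of stability statement the paper proves for the map $x\mapsto x'$ in the paragraph following Lemma~\ref{lem:enlarge} (you also gloss over the order-independence of the successive enlargements, which is needed for $\inv$ to be well defined, but that is minor). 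With span inclusion replaced by this left-to-right order your scheme can be repaired; as written, however, the involution property --- the heart of the lemma --- is not proved.
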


\begin{rmq}\label{rmq:triv}
For $c=c_{\mathrm{lin}}$ we have $\inv=\mathrm{id}$; indeed $L_c=\{1,n+1\}$ in that case, but the points $1$ and $n+1$ cannot be nested in a polygon and if they index a polygon, they are either initial or terminal. It follows that for all $x\in\NC(\Sn,c)$ we have $M_x^c=\emptyset=N_x^c$. In case $c=c_{\mathrm{lin}}^{-1}$ we have $\inv(c)=(1,n+1)$. It would be interesting to find a Coxeter-theoretic interpretation of $\inv$. It is easy to see that the map $\inv$ preserves the $\S$-support, that is, we have ${{\sf{supp}}}_{\mathcal{S}}(x)={{\sf{supp}}}_{\mathcal{S}}(\inv(x))$ for all $x\in\NC(\Sn,c)$. Note that we can similarly define an involution by replacing the set $L_c$ by $R_c$ in the definitions, in which case we obtain the identity for $c=c_{\mathrm{lin}}^{-1}$ and an involution sending $c$ to the transposition $(1,n+1)$ for $c=c_{\mathrm{lin}}$.
\end{rmq}


\begin{rmq}
Given $x\in\NC(\Sn, c)$, any cycle $c_i$ from the decomposition of $x=c_1 c_2\cdots c_k$ is in $\NC(\Sn,c)$, hence $\inv(c_i)\in\NC(\Sn,c)$. However in general $\inv(x)\neq\inv(c_1) \inv(c_2)\cdots \inv(c_k)$. It can even happen that the element in the right hand side does not lie in $\NC(\Sn,c)$. For example, for $c=(1,4,5,3,2)$, $w=(1,4)(2,5)\in\NC(\mathfrak{S}_5, c)$ we have $\inv((1,4))\inv((2,5))=(1,4,3,2)(2,5,3)=(1,4,3)(2,5)\notin\NC(\mathfrak{S}_5,c)$. \end{rmq}

\subsection{Bijection $\#$2}\label{sub:bij2}

\begin{nota}
We define a bijection $\psi_2:\NC(\Sn,c)\longrightarrow \mathcal{I}(n)$ by twisting the bijection $\psi_1$ from Proposition~\ref{prop:gobwilbij} by the involution $\inv$, i.e., for $x\in\NC(\Sn,c)$ we set $\psi_2(x):=\psi_1(\inv(x))$.
\end{nota}

\subsection{Comparison of the bijections}\label{ref:comparison}
In this subsection, we compare the bijections $\psi_1, \psi_2:\NC(\Sn,c)\overset{\sim}{\longrightarrow} \mathcal{I}(n)$ given in Subsections \ref{sub:bij1} and \ref{sub:bij2}. We compute them in Figure~\ref{fig:difftes} for $n=4$, $c=s_2 s_1 s_3=(1,3,4,2)$. We have $L_c=\{4,2,1\}$, $R_c=\{1,3,4\}$. There are $14$ elements in $\NC(\Sn,c)$ and $\NC(\Sn,c_{\mathrm{lin}})$ and they are depicted by their cycle decomposition.  

One sees that if $\psi_1(x)=(D,U)$ and $\psi_2(x)=(D',U')$, then $$D\backslash (D\cap U)=D'\backslash (D'\cap U')\text{ and }U\backslash (D\cap U)=U'\backslash (D'\cap U').$$ This follows from the fact that $\inv$ preserves the initial and terminal indices of $x$ which are exactly those lying in these sets. 

\begin{figure}
\begin{center}
{
\footnotesize
\begin{pspicture}(-2.6,-3)(6,4)

\begin{tabular}{|c|c|c|c|}
\thickhline
$x$ & $\inv(x)$ & $\psi_1(x)$ & $\psi_2(x)$\\
\thickhline
$\mathrm{id}$ & $\mathrm{id}$ & $(\emptyset,\emptyset)$ & $(\emptyset,\emptyset)$\\
\hline
$(1,2)$ & $(1,2)$ & $(\{1\},\{2\})$ & $(\{1\},\{2\})$ \\
\hline
$(2,3)$ & $(2,3)$ & $(\{2\},\{3\})$ & $(\{2\},\{3\})$\\
\hline
$(3,4)$ & $(3,4)$ & $(\{3\},\{4\})$ & $(\{3\},\{4\})$ \\
\hline
$(1,3)$ & $(2,1,3)$ & $(\{1\},\{3\})$ & $(\{1,2\},\{2,3\})$ \\
\hline
$(2,4)$ & $(2,4)$ & $(\{2\},\{4\})$ & $(\{2\},\{4\})$ \\
\hline
$(1,4)$ & $(1,4,2)$ & $(\{1\},\{4\})$ & $(\{1,2\},\{2,4\})$ \\
\hline
$(1,3,4)$ & $(1,3,4,2)$ & $(\{1,3\},\{3,4\})$ & $(\{1,2,3\},\{2,3,4\})$ \\
\hline
$(3,4,2)$ & $(3,4,2)$ & $(\{2,3\},\{3,4\})$ & $(\{2,3\},\{3,4\})$ \\
\hline
$(4,2,1)$ & $(1,4)$ & $(\{1,2\},\{2,4\})$ & $(\{1\},\{4\})$ \\
\hline
$(2,1,3)$ & $(1,3)$ & $(\{1,2\},\{2,3\})$ & $(\{1\},\{3\})$ \\
\hline
$(1,3)(2,4)$ & $(1,3)(2,4)$ & $(\{1,2\},\{3,4\})$ & $(\{1,2\},\{3,4\})$ \\
\hline
$(1,2)(3,4)$ & $(1,2)(3,4)$ & $(\{1,3\},\{2,4\})$ & $(\{1,3\},\{2,4\})$ \\
\hline
$(1,3,4,2)$ & $(1,3,4)$ & $(\{1,2,3\},\{2,3,4\})$ & $(\{1,3\},\{3,4\})$\\
\thickhline

\end{tabular}

\end{pspicture}
}
\end{center}
\caption{The bijections $\psi_1, \psi_2:\NC(\Sn,c)\overset{\sim}{\longrightarrow} \mathcal{I}(n)$ for $n=4$ and $c=s_2 s_1 s_3=(1,3,4,2)$.}
\label{fig:difftes}
\end{figure}

\subsection{Bijections with fully commutative elements}\label{sub:bijwithful}

In this subsection, we introduce the set $\F(\Sn)$ of fully commutative elements of $\Sn$ and explain how to use $\psi_2$ (or any bijection $\NC(\Sn,c)\overset{\sim}{\longrightarrow} \mathcal{I}(n)$) to build a bijection $\varphi:\NC(\Sn,c)\longrightarrow \F(\Sn)$ for any $c\in\Std(\Sn)$.

\begin{definition}\label{defn:fully}

An element $w\in \Sn$ is \defn{fully commutative} if any two $\S$-reduced expressions of $w$ can be related by a sequence of commutation of adjacent letters (these are also known as the \defn{$321$-avoiding permutations}, see~\cite{stem} or item $115$ in \cite{Stanley}; in particular they are known to have enumeration $C_{n+1}$). Note that this definition can be given for an abritrary Coxeter group. We denote by $\F(\Sn)$ the set of fully commutative elements of $\Sn$. Each $w\in\F(\Sn)$ has a unique $\S$-reduced expression of the form \[(s_{i_1} s_{i_1-1}\cdots s_{j_1})(s_{i_2} s_{i_2-1}\cdots s_{j_2})\cdots (s_{i_k} s_{i_k-1}\cdots s_{j_k}),\]
where all the indices lie in $\{1,\dots,n\}$, $i_1<i_2<\dots<i_k$, $j_1<j_2<\dots<j_k$ and $j_m\leq i_m$ for all $1\leq m\leq k$. Conversely, any element written in this form is fully commutative. The $\S$-reduced expression above is called the \defn{normal form} of $w$. We set $I_w:=\{i_1,\dots, i_k\}$, $J_w:=\{j_1,\dots, j_k\}$.  

\end{definition}
The following is an easy consequence of the existence of normal forms (see for instance~\cite[Corollary 2.3]{Gob}) which will be needed further:

\begin{lemma}\label{lem:ij}
Let $w\in\F(\Sn)$. Let $i\in\{1,\dots, n\}$ such that $s_i$ occurs in a (equivalently any) $\S$-reduced expression of $w$. Then 
\begin{enumerate}
\item $i\in I_w$ if and only if in any $\S$-reduced expression of $w$, there is no occurrence of $s_{i+1}$ before the first occurrence of $s_i$,
\item $i\in J_w$ if and only if in any $\S$-reduced expression of $w$, there is no occurrence of $s_{i-1}$ after the last occurrence of $s_i$. 
\end{enumerate}
\end{lemma}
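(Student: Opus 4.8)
The plan is to verify each equivalence on the normal form and then to propagate it to every $\S$-reduced expression using the defining property of fully commutative elements. Since $w\in\F(\Sn)$, any two $\S$-reduced expressions of $w$ differ by a sequence of commutations $s_a s_b = s_b s_a$ with $|a-b|\geq 2$. Such a move never interchanges two occurrences of adjacent generators $s_c,s_{c+1}$, so the relative order of the occurrences of $s_i$ and $s_{i+1}$ (respectively of $s_i$ and $s_{i-1}$) is the same in all reduced expressions of $w$. In particular the two conditions ``there is no occurrence of $s_{i+1}$ before the first occurrence of $s_i$'' and ``there is no occurrence of $s_{i-1}$ after the last occurrence of $s_i$'' do not depend on the chosen reduced expression, so it is enough to test them on the normal form.

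I would then recall the normal form $(s_{i_1}\cdots s_{j_1})\cdots(s_{i_k}\cdots s_{j_k})$, in which the $m$-th block $B_m := s_{i_m}s_{i_m-1}\cdots s_{j_m}$ is a strictly decreasing run, with $i_1<\cdots<i_k$, $j_1<\cdots<j_k$ and $j_m\leq i_m$. Each generator $s_\ell$ occurs in $B_m$ precisely when $j_m\leq\ell\leq i_m$, and then exactly once. For part (1), suppose first $i=i_{m_0}\in I_w$. Since $i_m<i$ for $m<m_0$, no earlier block contains $s_i$, so the first occurrence of $s_i$ is the top letter of $B_{m_0}$; and $s_{i+1}$ occurs in $B_m$ only if $i+1\leq i_m$, which fails for all $m\leq m_0$. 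Hence no $s_{i+1}$ precedes the first $s_i$. Conversely, if $i\notin I_w$, let $B_{m_1}$ be the first block containing $s_i$; then $i<i_{m_1}$, so $j_{m_1}\leq i<i+1\leq i_{m_1}$ shows $s_{i+1}$ occurs in $B_{m_1}$, and since $B_{m_1}$ is decreasing it occurs before $s_i$, that is, before the first occurrence of $s_i$.

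Part (2) is handled by the symmetric bookkeeping, reading blocks from the bottom. If $i=j_{m_0}\in J_w$, then $j_m>i$ for $m>m_0$ shows no later block contains $s_i$, so the last $s_i$ is the bottom letter of $B_{m_0}$; and $s_{i-1}$ occurs in $B_m$ only if $j_m\leq i-1$, which fails for all $m\geq m_0$, so no $s_{i-1}$ follows the last $s_i$. Conversely, if $i\notin J_w$, take the last block $B_{m_2}$ containing $s_i$; then $i>j_{m_2}$ gives $j_{m_2}\leq i-1\leq i_{m_2}$, so $s_{i-1}$ occurs in $B_{m_2}$, necessarily after $s_i$ since the run is decreasing. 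Combining this with the invariance from the first paragraph yields both equivalences for every reduced expression. I expect no genuine obstacle here: the only delicate point is the bookkeeping that locates the first (respectively last) occurrence of $s_i$ in the correct block, which rests entirely on the monotonicity $i_1<\cdots<i_k$ and $j_1<\cdots<j_k$ together with the fact that each block is a strictly decreasing run.
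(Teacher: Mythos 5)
Your proof is correct and is precisely the argument the paper has in mind: the paper gives no proof of its own, stating only that the lemma is ``an easy consequence of the existence of normal forms'' and citing \cite[Corollary 2.3]{Gob}, and your two ingredients --- the observation that commutation moves (which connect all $\S$-reduced expressions of a fully commutative element) never interchange occurrences of $s_i$ and $s_{i\pm 1}$, followed by the block-by-block bookkeeping in the normal form using the monotonicity of the $i_m$ and $j_m$ --- are exactly that easy consequence, written out in full. No gap.
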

Also note the following characterization of fully commutative elements in $\Sn$ (see for instance \cite[Proposition 2.2]{Gob}):
\begin{proposition}\label{prop:fullya}
Let $w\in\Sn$. The following are equivalent:
\begin{enumerate}
\item The element $w$ is fully commutative,
\item If $s_{i_1}\cdots s_{i_k}$ is an $\S$-reduced expression of $w$, then for all $i=1,\dots,n$, the integer $n_i(w):=|\{j~|~i_j=i\}|$ is independent of the chosen $\S$-reduced expression. 
\end{enumerate}
\end{proposition}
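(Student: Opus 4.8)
The plan is to prove the two implications separately, using the fact---already invoked in the paper when defining the $\S$-support---that any two $\S$-reduced expressions of $w$ are related by a sequence of braid relations (Matsumoto--Tits). In type $A$ these relations split into two kinds: the commutations $s_a s_b = s_b s_a$ with $|a-b|\geq 2$, and the length-three braid relations $s_i s_{i+1} s_i = s_{i+1} s_i s_{i+1}$. Full commutativity of $w$ is precisely the statement that any two $\S$-reduced expressions are related using commutations only.

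For the implication $(1)\Rightarrow(2)$, note that a commutation merely swaps two adjacent letters of a word and so does not change the multiset of generators occurring in it; in particular every count $n_i$ is preserved. If $w$ is fully commutative, any two $\S$-reduced expressions are related by a sequence of commutations, so all of them carry the same value of each $n_i$, and $(2)$ follows. This is the straightforward direction.

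For $(2)\Rightarrow(1)$ I would argue by contraposition: assuming $w$ is \emph{not} fully commutative, I exhibit two $\S$-reduced expressions on which some $n_i$ differs. Since $w$ is not fully commutative, connecting two reduced expressions lying in distinct commutation classes requires applying at least one length-three braid relation. At the step where it is first applied, the current word is an $\S$-reduced expression $\bv$ of $w$ (every intermediate word in a Matsumoto--Tits sequence is again reduced for $w$) containing a consecutive factor $s_i s_{i+1} s_i$, say. Replacing this factor by $s_{i+1} s_i s_{i+1}$ produces another $\S$-reduced expression $\bv'$ of $w$ with $n_i(\bv') = n_i(\bv) - 1$ and $n_{i+1}(\bv') = n_{i+1}(\bv) + 1$, all other counts being unchanged. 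Hence $n_i$ is not independent of the chosen reduced expression, so $(2)$ fails, completing the contrapositive.

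The real content is Matsumoto--Tits' theorem, on which everything rests; granting it, the decisive observation is the elementary bookkeeping that a type $A$ braid move transfers exactly one occurrence between the neighbouring generators $s_i$ and $s_{i+1}$ and fixes all others. The step requiring the most care is the passage, for a non-fully-commutative $w$, from the abstract failure of commutation-equivalence to an explicit reduced expression to which a braid move applies; this is precisely where one uses that each word produced along a Matsumoto--Tits connection is itself an $\S$-reduced expression of $w$, so that the braid move can be performed \emph{inside} a genuine reduced word for $w$.
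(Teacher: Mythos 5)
Your proof is correct. There is, however, nothing in the paper to compare it against: the paper does not prove this proposition at all, but simply cites \cite[Proposition 2.2]{Gob} for it. Your argument is the natural self-contained one: Matsumoto--Tits reduces everything to tracking what the two kinds of type $A$ braid relations do to letter counts, and the decisive bookkeeping --- a commutation preserves the multiset of letters, while the move $s_is_{i+1}s_i = s_{i+1}s_is_{i+1}$ transfers exactly one occurrence from $s_i$ to $s_{i+1}$ --- is exactly right. You also handle correctly the one point where a sloppier write-up could go wrong: in the contrapositive of $(2)\Rightarrow(1)$, the length-three braid move must be applied inside a genuine $\S$-reduced expression of $w$, and you justify this by noting that every intermediate word in a Matsumoto--Tits sequence represents $w$ and has the same length, hence is itself reduced. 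Since the definition of full commutativity used in the paper is precisely commutation-equivalence of all reduced expressions, the existence of at least one length-three move along such a sequence is immediate for non-fully-commutative $w$, and both implications are complete as written.
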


\begin{lemma}[{\cite[Theorem 1]{Z}}]\label{lem:fullocc}
An $\S$-word $s_{i_1}\cdots s_{i_k}$ is a reduced expression of a fully commutative element if and only if for any $i$ such that $s_i$ occurs at least twice in the word, there is exactly one occurrence of $s_{i-1}$ and one occurrence of $s_{i+1}$ between any two successive occurrences of $s_i$. 
\end{lemma}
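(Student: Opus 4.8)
The plan is to treat the two implications separately, with one technical observation doing most of the work in both: the stated condition depends only on the commutation class of the word. Indeed, a commutation move exchanges two adjacent letters $s_a, s_b$ with $|a-b|\geq 2$, and such a move never exchanges an occurrence of $s_i$ with an occurrence of $s_{i-1}$ or $s_{i+1}$ (these do not commute with $s_i$). Hence, for every $i$, the reading order of the occurrences of $s_{i-1}, s_i, s_{i+1}$ relative to one another is unchanged, and in particular the numbers of $s_{i-1}$'s and of $s_{i+1}$'s lying between two successive occurrences of $s_i$ are unaffected. So I would first record that the condition is invariant under commutations. At the two ends of the index range one reads $s_0$ and $s_{n+1}$ as absent generators, so that the condition forces $s_1$ and $s_n$ to occur at most once; this matches the normal form of Definition~\ref{defn:fully}, in which $s_1$ and $s_n$ each appear at most once.

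For the direct implication I would check the condition on the normal form $(s_{i_1}\cdots s_{j_1})\cdots(s_{i_k}\cdots s_{j_k})$ of a fully commutative $w$. The blocks containing a fixed generator $s_\ell$ form a contiguous range of indices $m$, since $[j_m,i_m]$ has both endpoints increasing in $m$; thus two successive occurrences of $s_\ell$ sit in two consecutive blocks $m,m+1$. From $j_{m+1}\le \ell\le i_m$ together with $j_m<j_{m+1}$ and $i_m<i_{m+1}$ one gets $j_m<\ell<i_{m+1}$, so the tail $s_{\ell-1}\cdots s_{j_m}$ of block $m$ contributes exactly one $s_{\ell-1}$ and the head $s_{i_{m+1}}\cdots s_{\ell+1}$ of block $m+1$ contributes exactly one $s_{\ell+1}$ between the two $s_\ell$'s, while no further $s_{\ell\pm1}$ occurs in between. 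Hence the normal form satisfies the condition, and since $w$ is fully commutative, every reduced expression of $w$ is obtained from its normal form by commutations (Definition~\ref{defn:fully}); by the invariance above, all of them satisfy the condition.

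For the converse, suppose $\mathbf{w}=s_{i_1}\cdots s_{i_k}$ satisfies the condition. The condition rules out two kinds of factors in $\mathbf{w}$, and by commutation-invariance also in every word commutation-equivalent to $\mathbf{w}$: a factor $s_i s_i$ is impossible, since two successive $s_i$'s must have an $s_{i-1}$ and an $s_{i+1}$ between them; and a braid factor $s_j s_{j+1} s_j$ (respectively $s_{j+1} s_j s_{j+1}$) is impossible, since its two outer letters are successive occurrences of $s_j$ (respectively $s_{j+1}$) with no $s_{j-1}$ (respectively $s_{j+2}$) between them. I would then invoke the solution of the word problem for Coxeter groups (Tits--Matsumoto): any two reduced expressions of a given element are connected by braid and commutation moves, and a non-reduced word can be carried by such moves to a word containing a factor $s_i s_i$. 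Starting from $\mathbf{w}$, no braid move is ever applicable, because every word reachable by commutations still satisfies the condition and hence has no braidable factor; so only commutations can be performed, and these never create a factor $s_i s_i$. Consequently $\mathbf{w}$ cannot be shortened, so it is reduced, and the set of all reduced expressions of $w$ coincides with the commutation class of $\mathbf{w}$, i.e.\ $w$ is fully commutative.

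The main point to get right is precisely this interplay between commutation-invariance and the word-problem input: once one knows that the condition is preserved by commutations and obstructs both $s_i s_i$ and the braid factors throughout the entire commutation class, reducedness and full commutativity drop out of Tits' theorem simultaneously, with no delicate induction on the word required. The only place demanding care is the bookkeeping at $i\in\{1,n\}$, where interpreting $s_0$ and $s_{n+1}$ as absent is exactly what forbids the boundary braid factors $s_1 s_2 s_1$ and $s_n s_{n-1} s_n$ and forces $s_1,s_n$ to appear at most once.
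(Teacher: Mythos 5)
Your proof is correct. Note, however, that the paper does not prove this lemma at all: it is imported verbatim from Zinno \cite[Theorem 1]{Z}, so there is no internal proof to compare against. What you have produced is a self-contained argument using only ingredients already present in the paper's setup: the normal form of Definition~\ref{defn:fully}, the observation that the counting condition is invariant under commutation moves (since commutations never transpose $s_i$ past $s_{i\pm 1}$), and Tits' solution of the word problem. Your two directions are both sound: the forward direction correctly exploits that the blocks of the normal form containing a fixed $s_\ell$ form a contiguous range (because the sequences $(i_m)$ and $(j_m)$ are both strictly increasing, giving $j_m<\ell<i_{m+1}$ for consecutive blocks), and the converse correctly combines commutation-invariance with the fact that the condition forbids factors $s_is_i$, $s_is_{i\pm1}s_i$ throughout the entire commutation class, so that Tits' theorem yields reducedness and full commutativity simultaneously. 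Your explicit treatment of the boundary cases $i\in\{1,n\}$ (reading $s_0$ and $s_{n+1}$ as absent, which forces $s_1$ and $s_n$ to occur at most once) is not pedantry but essential: without that reading the statement would be false, as $s_1s_2s_1$ shows. The one external input your argument needs beyond the paper is Tits' theorem that a non-reduced word can be carried by braid moves to one containing a square; this is standard, and in exchange you get a proof that could replace the citation to \cite{Z} entirely.
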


We denote by $I_w[1]$ the set $I_w+1:=\{i_1+1,\dots, i_k+1\}$. It follows directly from the existence and unicity of normal forms that the map $$g:\mathcal{I}(n)\longrightarrow \F(\Sn),~(J_w, I_w[1])\mapsto w$$ is a bijection. Composing with the bijection $\psi_2$ from Subsection~\ref{sub:involution} we get 

\begin{theorem}\label{thm:bijecgenerales}
Let $c\in\Std(\Sn)$. The map $$\varphi:=g\circ \psi_2:\NC(\Sn,c)\longrightarrow \F(\Sn)$$ is a bijection. It is characterized in terms of the combinatorial data as follows: let $x\in\NC(\Sn,x)$, $k\in[n+1]$. Then 
$$k\in J_{\varphi(x)}\Leftrightarrow\left\{
    \begin{array}{l}
        k\in R_{c}\cap D_x^{c}\mbox{, or}\\
        k\in L_{c}\mbox{ and }k\mbox{ is an initial index of a polygon of }x\mbox{, or}\\
        k\in L_{c}\mbox{ and }k\notin\supp(x)\mbox{ and }k\mbox{ is nested in a polygon of }x.\\ 
    \end{array}
    \right.
$$
We also have that
$$k-1\in I_{\varphi(x)}\Leftrightarrow
\left\{
    \begin{array}{l}
        k\in R_{c}\cap U_x^{c}\mbox{, or}\\
        k\in L_{c}\mbox{ and }k\mbox{ is a terminal index of a polygon of }x\mbox{, or}\\
        k\in L_{c}\mbox{ and }k\notin\supp(x)\mbox{ and $k$ is nested in a polygon of }x.\\ 
    \end{array}
    \right.
$$
\end{theorem}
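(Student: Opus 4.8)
The bijectivity of $\varphi$ is immediate and requires no combinatorics: $\inv$ is an involution (Lemma~\ref{lem:invol}), hence a bijection, $\psi_1$ is a bijection (Proposition~\ref{prop:gobwilbij}), so $\psi_2=\psi_1\circ\inv$ is a bijection onto $\mathcal{I}(n)$, and composing with the bijection $g$ shows $\varphi=g\circ\psi_2$ is a bijection. The real content is the combinatorial description, and the plan is simply to unwind the three maps in turn. By definition of $g$, if $\psi_2(x)=(D,U)$ then $\varphi(x)$ is the unique fully commutative element $w$ with $J_w=D$ and $I_w[1]=U$; since $\psi_2(x)=\psi_1(\inv(x))=(D_{\inv(x)}^c,U_{\inv(x)}^c)$, this gives
\[
J_{\varphi(x)}=D_{\inv(x)}^c,\qquad I_{\varphi(x)}[1]=U_{\inv(x)}^c.
\]
Hence $k\in J_{\varphi(x)}$ iff $k\in D_{\inv(x)}^c$, while $k-1\in I_{\varphi(x)}$ iff $k\in I_{\varphi(x)}[1]=U_{\inv(x)}^c$. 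Everything therefore reduces to describing membership in $D_{\inv(x)}^c$ and $U_{\inv(x)}^c$ directly in terms of the diagram of $x$.

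For this I would feed in Lemma~\ref{lem:invol}, which expresses these sets as
\[
D_{\inv(x)}^c=(D_x^c\setminus M_x^c)\cup N_x^c,\qquad U_{\inv(x)}^c=(U_x^c\setminus M_x^c)\cup N_x^c.
\]
The term $N_x^c$ is, by its very definition, the set of $k\in L_c$ with $k\notin\supp(x)$ nested in a polygon of $x$, so it reproduces the third listed case verbatim in both characterizations; moreover $N_x^c$ lies outside $\supp(x)$ and so is disjoint from $D_x^c\setminus M_x^c$ and $U_x^c\setminus M_x^c$, which lie inside $\supp(x)$. It then remains to show that $D_x^c\setminus M_x^c$ is the union of the first two cases (and symmetrically for $U$). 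Writing $M_x^c=D_x^c\cap U_x^c\cap L_c$, an element $k\in D_x^c$ fails to lie in $M_x^c$ exactly when $k\notin L_c$ or $k\notin U_x^c$; and since $U_x^c=\supp(x)\setminus\{\text{initial vertices}\}$, for $k\in\supp(x)$ the condition $k\notin U_x^c$ says precisely that $k$ is an initial index. Splitting on whether $k\in R_c$ or $k\in L_c\setminus R_c$ then yields exactly the first two clauses. The argument for $U_x^c\setminus M_x^c$ is the same after exchanging $D$ with $U$ and initial with terminal, using $k\notin D_x^c$ (for $k\in\supp(x)$) to mean that $k$ is terminal.

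The only delicate points are the two boundary labels $1,n+1\in L_c\cap R_c$ and the tacit fact that an initial (resp.\ terminal) index always lies in $D_x^c$ (resp.\ $U_x^c$). I expect this to be the main obstacle, though a mild one: since distinct polygons have disjoint vertex sets, $\min P$ can never coincide with the terminal index $\max Q$ of any polygon $Q$, so every initial index lies in $D_x^c$, and dually every terminal index lies in $U_x^c$. In particular the label $1$, when in the support, is initial — hence absent from $U_x^c$ and from $M_x^c$ — and $n+1$ is terminal, hence absent from $D_x^c$. With this disjointness observation in hand, checking that the boundary labels are sorted into the correct case (for instance that $k=1\in R_c\cap D_x^c$ falls under the first clause, consistently with the initial-vertex clause that also applies to it) is routine. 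Beyond this bookkeeping I anticipate no conceptual difficulty, as all the substantive combinatorics has already been packaged into Lemma~\ref{lem:invol} and into the definitions of $D_x^c$, $U_x^c$, $M_x^c$ and $N_x^c$.
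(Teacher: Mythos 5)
Your proof is correct and follows essentially the same route as the paper: bijectivity by composition, the identities $J_{\varphi(x)}=D_{\inv(x)}^c$ and $I_{\varphi(x)}[1]=U_{\inv(x)}^c$ from the definitions of $g$ and $\psi_2$, and then the characterization via Lemma~\ref{lem:invol}. The paper compresses the final step into ``the rest of the claim follows from the definition and properties of the involution $\inv$''; your explicit unwinding of $(D_x^c\setminus M_x^c)\cup N_x^c$ and $(U_x^c\setminus M_x^c)\cup N_x^c$, including the check that initial (resp.\ terminal) indices always lie in $D_x^c$ (resp.\ $U_x^c$) and that the boundary labels $1,n+1\in L_c\cap R_c$ cause no trouble, is exactly the bookkeeping the paper leaves to the reader.
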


\begin{proof}
The map is a bijection since both $g$ and $\psi_2$ are. We have by definition of $\psi_2$ that $J_{\varphi(x)}=D_{\inv({x})}^c$, $I_{\varphi(x)}[1]=U_{\inv({x})}^c$. The rest of the claim follows from the definition and properties of the involution $\inv$.
\end{proof}
In general we use the letter $x$ for a noncrossing partition and the letter $w$ for a fully commutative element.

\begin{exple}\label{ex:bijarbitraire}
We continue the example given in Subsection~\ref{ref:comparison} and explicitly give the map $\varphi$ for $c=(1,3,4,2)$ in Figure~\ref{fig:thebijection}. Recall that for $x\in\NC(\Sn,c)$ we have $$\psi_2(x)=(D_{\inv({x})}^c, U_{\inv({x})}^c)=(J_{\varphi(x)}, I_{\varphi(x)}[1]).$$

\begin{center}
\begin{figure}
{
\footnotesize
\begin{tabular}{|c|c|c|c|c|}
\thickhline
$x$ & $\psi_2(x)$ & $(J_{\varphi(x)}, I_{\varphi(x)})$ & $\varphi(x)$ (n.f.) & $\varphi(x)$ (cycle dec.) \\
\thickhline
$\mathrm{id}$ & $(\emptyset,\emptyset)$ & $(\emptyset,\emptyset)$ & $e$ & $\mathrm{id}$\\
\hline
$(1,2)$ &  $(\{1\},\{2\})$ & $(\{1\},\{1\})$ & $s_1$ & $(1,2)$ \\
\hline
$(2,3)$ &  $(\{2\},\{3\})$ & $(\{2\},\{2\})$ & $s_2$ & $(2,3)$ \\
\hline
$(3,4)$  &  $(\{3\},\{4\})$ & $(\{3\},\{3\})$ & $s_3$ & $(3,4)$\\
\hline
$(1,3)$  &  $(\{1,2\},\{2,3\})$ & $(\{1,2\},\{1,2\})$ & $(s_1)(s_2)$ & $(1,2,3)$\\
\hline
$(2,4)$  & $(\{2\},\{4\})$ & $(\{2\},\{3\})$ & $s_3s_2$ & $(4,3,2)$\\
\hline
$(1,4)$  & $(\{1,2\},\{2,4\})$ & $(\{1,2\},\{1,3\})$ & $(s_1) (s_3 s_2)$ & $(1,2,4,3)$ \\
\hline
$(1,3,4)$  & $(\{1,2,3\},\{2,3,4\})$ & $(\{1,2,3\},\{1,2,3\})$ & $(s_1)( s_2)( s_3)$ & $(1,2,3,4)$\\
\hline
$(3,4,2)$  & $(\{2,3\},\{3,4\})$ & $(\{2,3\},\{2,3\})$ & $(s_2) (s_3)$ & $(2,3,4)$\\
\hline
$(4,2,1)$  & $(\{1\},\{4\})$ & $(\{1\},\{3\})$ & $s_3 s_2 s_1$ & $(4,3,2,1)$\\
\hline
$(2,1,3)$  & $(\{1\},\{3\})$ & $(\{1\},\{2\})$ & $s_2 s_1$ & $(3,2,1)$\\
\hline
$(1,3)(2,4)$ & $(\{1,2\},\{3,4\})$ & $(\{1,2\},\{2,3\})$ & $(s_2 s_1) (s_3 s_2)$ & $(1,3)(2,4)$\\
\hline
$(1,2)(3,4)$  & $(\{1,3\},\{2,4\})$ & $(\{1,3\},\{1,3\})$ & $(s_1) (s_3)$ & $(1,2)(3,4)$\\
\hline
$(1,3,4,2)$  & $(\{1,3\},\{3,4\})$ & $(\{1,3\},\{2,3\})$ & $(s_2 s_1) (s_3)$ & $(1,3,4,2)$\\
\thickhline

\end{tabular}
}
\caption{The bijection $x\mapsto \varphi(x)$ for $c=(1,3,4,2)$.}
\label{fig:thebijection}
\end{figure}
\end{center}

\end{exple}
\begin{rmq}\label{rmq:bijcasc}
In case $c=c_{\mathrm{lin}}$, since $\inv=\mathrm{id}$ (see Remark \ref{rmq:triv}) we have
$$(D_x^{c}, U_x^{c})=(J_{\varphi(x)}, I_{\varphi(x)}[1])$$
for all $x\in\NC(\Sn,c)$. The bijection $\varphi$ is hence much easier to compute in that case.
\end{rmq}

\section{{Standard forms and exotic orders on noncrossing partitions}
}

In this section, we give an (hopefully self-contained) exposition of some results of \cite{GobWil} on exotic orders on noncrossing partitions. More precisely, for any $c\in\Std(\Sn)$ we define a partial order on $\NC(\Sn,c)$, endowing $\NC(\Sn,c)$ with the structure of a graded distributive lattice. If $c=c_{\mathrm{lin}}$ then this poset is the restriction to $\NC(\Sn,c)$ of the Bruhat order on $\Sn$ which unexpectedly yields a lattice structure --- for other Coxeter elements in general the order is distinct from the restriction of Bruhat order. The isomorphy type of the lattice is the same for all $c\in\Std(\Sn)$ and is that of the lattice of nonnesting partitions (or order ideals in the type $A_n$ root poset --- see \cite[Section 5.1]{Arm} for more on this lattice). To this end we first recall from \cite{GobWil} the definition of standard forms of noncrossing partitions, which are some canonical (possibly non-reduced) $\S$-words for them.

Linear extensions of these orders, together with the bijection from Theorem~\ref{thm:bijecgenerales}, will provide triangular base changes in the classical Temperley-Lieb algebra between bases indexed respectively by noncrossing partitions and fully commutative permutations.

\subsection{Standard forms}\label{sub:std}  

Note that if $x\in\NC(\Sn, c)$ is a cycle, there exists a unique cycle $y\in\NC(\Sn, c_{\mathrm{lin}})$ with $\mathrm{supp}(x)=\mathrm{supp}(y)$. We first take a $\T$-reduced expression of the cycle $x$ having as letters the transpositions $(d_i, d_{i+1})$, $i=1,\dots,k-1$ where $\mathrm{supp}(x)=\{d_1 <d_2 <\cdots < d_k \}$. Such a reduced expression always exists and is not unique in general, but any two of these reduced expressions are equal up to commutation. The transpositions occurring in a reduced expression as above are exactly the canonical Coxeter generators of Dyer \cite{Dyerrefn} of the parabolic subgroup of $\Sn$ defined by $x$. We call a $\T$-reduced expression as above \defn{distinguished}. 

\begin{rmq}
In fact, a $\T$-reduced expression as above can also be defined as follows: chose an $\S$-reduced expression of $c$. Take the $c$-sorting word for $w_0$ with respect to that choice of reduced expression of $c$ (see \cite{Read}). Then it follows from \cite{ABW} that the corresponding reflection order has a unique subword which is a $\T$-reduced expression of $x$ and the letters occurring in it are exactly the canonical generators of the parabolic subgroup defined by $x$.
\end{rmq}

\begin{definition}
Take a distinguished expression of a cycle $x\in\NC(\Sn, c)$. Replace any transposition $(i, j)$ with $i<j$ occurring in it by the $\S$-word $s_{j-1} s_{j-2}\cdots s_i\cdots s_{j-1}$ (which we call a \defn{syllable}) to get an $\S$-word representing $x$. Call this word a \defn{standard form} of $x$. The simple transposition $s_i$ is the \defn{center} of the above syllable and we say that the transposition $s_{j-1}$ occurs \defn{at the top} of the syllable. The center of a syllable splits the syllable: the letters occurring on the left (resp. on the right) of the center form the \defn{left part} (resp. \defn{right part}) of the syllable. 
\end{definition}

\begin{exple}
Let $x=(1,8,5,4)\in \NC(\mathfrak{S}_8, c)$ where $c=(1,3,6,7,8,5,4,2)$. A distinguished expression of $x$ is given by $(8,5)(5,4)(4,1)$. Then a standard form of $x$ is $$(s_7 s_6 s_5 s_6 s_7)(s_4)(s_3 s_2 s_1 s_2 s_3).$$ The brackets indicate the syllables. 
\end{exple}

\begin{rmq}\label{rmq:lettre1fois}
Note that if $s_k\in \S$ occurs in a standard form of a cycle $x\in\NC(\Sn,c)$, then it occurs in exactly one syllable. Indeed, it follows from the above construction that distinct syllables have disjoint $\S$-support. 
\end{rmq}

A standard form of a cycle $x$ is always an $\S$-reduced expression of $x$. The aim now is to define standard forms of abritrary noncrossing partitions by concatenating standard forms of the various cycles of the noncrossing partition. For some proofs in the following sections, we need to fix a specific rather subtle order on the cycles, equivalently on the polygons. Namely, given $x\in\NC(\Sn, c)$, we define a total order $<$ on $\Pol(x)=\{P_1,\dots, P_k\}$ as follows: set $n_i:=\min P_i$ for all $i=1,\dots,k$. Then $P_i < P_j$ if and only if when going along the circle in counterclockwise order starting at the point $n+1$, the point $n_i$ is met before the point $n_j$. An example is given in Figure~\ref{figure:ordre}.

\begin{definition}
Given $x\in\NC(\Sn,c)$, $\Pol(x)=\{P_1, \dots, P_k\}$ where $i<j$ if and only if $P_i< P_j$ and denoting by $c_i$ the cycle corresponding to $P_i$, we define a \defn{standard form} of $x$ to be any $\S$-word for $x$ obtained by concatenating standard forms of $c_1, c_2,\dots, c_k$ (in this order). We denote a standard form of $x$ by $m_x^c$. It is unique up to commutation of adjacent syllables representing commuting reflections of the same polygon.
\end{definition}
The following is clear by construction
\begin{lemma}\label{std}
Let $x\in\NC(\W,c)$. Then 
\begin{enumerate}
\item A letter $s_k$ is the center of a syllable of $m_x^c$ if and only if $k$ is a non-terminal index of a polygon of $x$. 
\item A letter $s_k$ is at the top of a syllable of $m_x^c$ if and only if $k+1$ is a non-initial index of a polygon of $x$. 
\item A letter $s_k$ is the center (or the top) of at most one syllable of $m_x^c$.
\end{enumerate}
\end{lemma}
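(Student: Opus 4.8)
The plan is to unwind the construction of $m_x^c$ one syllable at a time, reducing each of the three assertions to an elementary bookkeeping statement about which indices occur as centers and as tops. The whole lemma is flagged as ``clear by construction,'' so the work is essentially to make the correspondence between syllables and vertices precise and then read off the claims.

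First I would record the local data of a single syllable. For a transposition $(i,j)$ with $i<j$, its syllable is $s_{j-1}s_{j-2}\cdots s_i\cdots s_{j-1}$, so its \emph{center} is $s_i$ (center index $i$) and its \emph{top} is $s_{j-1}$ (top index $j-1$). Next I would fix a cycle $c_a$ of $x$ with $\supp(c_a)=\Vert(P_a)=\{d_1<d_2<\cdots<d_m\}$. By the definition of a distinguished expression, the transpositions occurring in it are exactly $(d_1,d_2),(d_2,d_3),\dots,(d_{m-1},d_m)$, each exactly once — their number equals the reflection length $m-1$ — and this holds independently of the chosen distinguished expression. Reading off centers and tops from the previous step, the centers of the syllables coming from $c_a$ are precisely $s_{d_1},\dots,s_{d_{m-1}}$ and the tops are precisely $s_{d_2-1},\dots,s_{d_m-1}$.

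From this, (1) and (2) follow at once. The center index $d_l$ with $1\le l\le m-1$ ranges exactly over the non-terminal vertices $d_1,\dots,d_{m-1}$ of $P_a$ (the terminal vertex being $d_m$), and conversely every non-terminal vertex arises as a center since all the transpositions $(d_l,d_{l+1})$ do occur; taking the union over the cycles $c_1,\dots,c_k$ of $x$ gives (1). Similarly a top index equals $d_{l+1}-1$, so $s_k$ is a top iff $k+1=d_{l+1}$ for some $l$, and $d_2,\dots,d_m$ are exactly the non-initial vertices of $P_a$; again the union over cycles gives (2).

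For (3) I would show separately that $s_k$ is the center of at most one syllable and the top of at most one syllable. Within a single cycle this is immediate, since the indices $d_1,\dots,d_{m-1}$ (resp. $d_2,\dots,d_m$) are distinct, so the corresponding centers (resp. tops) are pairwise distinct; alternatively one invokes Remark~\ref{rmq:lettre1fois}, that distinct syllables of one cycle have disjoint $\S$-support. Across different cycles, if $s_k$ were a center of a syllable of $c_a$ and of $c_b$ then by (1) the index $k$ would be a vertex of both $P_a$ and $P_b$, contradicting $\supp(c_a)\cap\supp(c_b)=\emptyset$; applying the same disjointness argument to $k+1$ via (2) handles the tops. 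The only point requiring any care is precisely this step (3), where one must combine the intra-cycle injectivity of the center/top assignment with the disjointness of the supports of the cycles of $x$ to exclude coincidences both inside a single polygon and between two polygons; everything else is a direct translation of the construction.
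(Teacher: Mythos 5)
Your proposal is correct and matches the paper's treatment: the paper simply declares the lemma ``clear by construction,'' and your argument is exactly the careful unwinding of that construction (centers and tops of syllables of a distinguished expression of each cycle, plus disjointness of the supports of the cycles of $x$). Your reading of item (3) as two separate at-most-one statements, handled by intra-cycle distinctness together with disjoint supports across cycles, is precisely the intended content.
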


In general a standard form of a noncrossing partition which is not a cycle is not $\S$-reduced (see Example~\ref{explecc} below) but it is always reduced when $c=c_{\mathrm{lin}}$ (by \cite[Lemma 2.8]{Gob}). 

\begin{figure}[h!]
\centering
{\begin{minipage}{8cm}
\psscalebox{1.4}{\begin{pspicture}(-2,-1.92)(2.5,1.92)
\pscircle[linecolor=gray, linewidth=0.2pt](0,0){1.8}
\pscurve[linecolor=red, linewidth=1.5pt](1.2237,1.32)(0, 1.1)(-1.44,1.08)
\pscurve[linecolor=red, linewidth=1.5pt](1.2237,1.32)(1.3,0.84)(1.7636,0.36)
\pscurve[linecolor=red, linewidth=1.5pt](-1.44,1.08)(0, 0.8)(1.7636,0.36)

\psline[linecolor=red, linewidth=1.5pt](-1.59197,0.84)(1.796,-0.12)
\psline[linecolor=red, linewidth=1.5pt](-1.59197,0.84)(1.796,0.12)
\pscurve[linecolor=red, linewidth=1.5pt](1.796, -0.12)(1.5, 0.08)(1.796, 0.12)

\psline[linecolor=red, linewidth=1.5pt](1.7636,-0.36)(-1.697,-0.6)
\psline[linecolor=red, linewidth=1.5pt](1.7636,-0.36)(-1.44,-1.08)
\pscurve[linecolor=red, linewidth=1.5pt](-1.697,-0.6)(-1.44,-0.8)(-1.44,-1.08)

\psline[linecolor=red, linewidth=1.5pt](1.59197,-0.84)(-1.2237,-1.32)

\rput(0.6, 0.9){\tiny $P_1$}
\rput(-0.6, 0.35){\tiny $P_2$}
\rput(-0.6, -0.7){\tiny $P_3$}
\rput(0.6, -1.2){\tiny $P_4$}

\psdots(0,1.8)(0.897,1.56)(-1.44,1.08)(1.697,0.6)(1.7636,0.36)(1.796,0.12)(1.796,-0.12)(1.7636,-0.36)(-1.697,-0.6)(1.59197,-0.84)(-1.44,-1.08)(-1.2237,-1.32)(-0.897,-1.56)(0,-1.8)

\psdots[linecolor=blue, fillcolor=blue](1.2237,1.32)(-1.59197,0.84)(1.7636,-0.36)(1.59197,-0.84)

\psline[linestyle=dotted, linewidth=0.4pt](-2,1.8)(2,1.8)
\psline[linestyle=dotted, linewidth=0.4pt](-2,1.56)(2,1.56)
\psline[linestyle=dotted, linewidth=0.4pt](-2,1.32)(2,1.32)
\psline[linestyle=dotted, linewidth=0.4pt](-2,1.08)(2,1.08)
\psline[linestyle=dotted, linewidth=0.4pt](-2,0.84)(2,0.84)
\psline[linestyle=dotted, linewidth=0.4pt](-2,0.6)(2,0.6)
\psline[linestyle=dotted, linewidth=0.4pt](-2,0.36)(2,0.36)
\psline[linestyle=dotted, linewidth=0.4pt](-2,0.12)(2,0.12)
\psline[linestyle=dotted, linewidth=0.4pt](-2,-1.8)(2,-1.8)
\psline[linestyle=dotted, linewidth=0.4pt](-2,-1.56)(2,-1.56)
\psline[linestyle=dotted, linewidth=0.4pt](-2,-1.32)(2,-1.32)
\psline[linestyle=dotted, linewidth=0.4pt](-2,-1.08)(2,-1.08)
\psline[linestyle=dotted, linewidth=0.4pt](-2,-0.84)(2,-0.84)
\psline[linestyle=dotted, linewidth=0.4pt](-2,-0.6)(2,-0.6)
\psline[linestyle=dotted, linewidth=0.4pt](-2,-0.36)(2,-0.36)
\psline[linestyle=dotted, linewidth=0.4pt](-2,-0.12)(2,-0.12)

\rput(2.2, 1.8){\tiny $1$}
\rput(2.2, 1.56){\tiny $2$}
\rput(2.2, 1.32){\tiny $3$}
\rput(2.2, 1.08){\tiny $4$}
\rput(2.2, 0.84){\tiny $5$}
\rput(2.2, 0.6){\tiny $6$}
\rput(2.2, 0.36){\tiny $7$}
\rput(2.2, 0.12){\tiny $8$}
\rput(2.2, -0.12){\tiny $9$}
\rput(2.2, -0.36){\tiny $10$}
\rput(2.2, -0.6){\tiny $11$}
\rput(2.2, -0.84){\tiny $12$}
\rput(2.2, -1.08){\tiny $13$}
\rput(2.2, -1.32){\tiny $14$}
\rput(2.2, -1.56){\tiny $15$}
\rput(2.2, -1.8){\tiny $16$}
\end{pspicture}}
 \end{minipage}
\begin{minipage}{5.5cm} 
\caption{}
\label{figure:ordre}
Example of the order $<$ on polygons. The initial indices of polygons are in blue. We have $$P_4< P_3< P_1< P_2.$$ \end{minipage}}
\end{figure}
The following is clear
\begin{lemma}\label{lem:centre1}
Let $x\in\NC(\Sn, c)$, $P, Q\in\Pol(x)$, $k\in\Vert(P)$. 
\begin{enumerate}
\item If $k\in L_c$ and $k$ is nested in $Q$, then $Q<P$,
\item If $k\in R_c$ and $k$ is nested in $Q$, then $Q>P$.
\end{enumerate}
\end{lemma}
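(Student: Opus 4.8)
The plan is to read the counterclockwise traversal defining the order $<$ as a single linear order and then to locate $\min P$ and $\min Q$ inside it using the noncrossing hypothesis.

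First I would reformulate the order. Cutting the circle at $n+1$ and reading counterclockwise gives a linear order $\prec$ on $[n+1]$ in which $n+1$ comes first, then the points of $R_c$ in \emph{decreasing} order (going up the right side to $1$), and finally the points of $L_c\setminus\{1,n+1\}$ in \emph{increasing} order (going down the left side). Thus $P<Q$ iff $\min P\prec\min Q$, and $\prec$ has two monotone regimes: among right-hand points the larger label comes earlier, among strictly-left-hand points the smaller label comes earlier, and every right-hand point precedes every strictly-left-hand point. I record the elementary consequence that for $k\in L_c$ any integer $m<k$ satisfies $m\prec k$ (whether $m\in R_c$ or $m\in L_c$).

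Next I would reduce (2) to (1) by the horizontal mirror symmetry of the model. Reflecting the labelled circle left--right fixes $1$ and $n+1$, preserves all heights (hence the relation ``nested'' of Remark~\ref{rmq:successifs} and every $\min$), sends noncrossing diagrams to noncrossing diagrams, and exchanges $L_c\leftrightarrow R_c$; the resulting labelling is again that of a standard Coxeter element (it swaps the ordered pair $(C_1,C_2)$). Since reflection reverses orientation it turns the counterclockwise order into the clockwise one, i.e. it reverses $\prec$. Hence a configuration as in (2), with $k\in R_c$, becomes a configuration as in (1) in the mirrored picture; there (1) yields $Q<P$ for the mirrored order, that is $\min P\prec\min Q$, i.e. $P<Q$ in the original picture, which is exactly the claim $Q>P$ of (2).

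It remains to prove (1), so assume $k\in L_c$. Because $x$ is noncrossing, the block $P$ lies inside a single gap of $Q$, i.e. an open arc bounded by two cyclically consecutive vertices of $Q$; it is the gap containing $k$, which (since $k$ is the leftmost point of $\mathcal{L}_k$ and $Q$ meets $\mathcal{L}_k$ to the right of $k$) lies to the left of $Q$ at height $k$. Let $g$ be the vertex of $Q$ bounding this gap that is met first when the defining traversal is run backwards from $k$ towards $n+1$. I would then establish two facts: (i) $\min Q\preceq g$, and (ii) $g\prec\min P$; together they give $\min Q\preceq g\prec\min P$, hence $Q<P$. For (i): the preliminary remark gives $\min Q\prec k$ from $\min Q<k$ and $k\in L_c$; as $\min Q$ is a vertex of $Q$ it does not lie in the open gap, so it cannot lie strictly between $g$ and $k$, forcing $\min Q\preceq g$.

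The step to watch, and the main obstacle, is (ii), because the gap may \emph{wrap around} $n+1$. This occurs exactly when $\max Q\in R_c$ and $Q$ has no vertex below $k$ on the left; then the far boundary of the gap is $\max Q$, and the gap also contains $n+1$ together with the right-hand points of label $>\max Q$, all of which $\prec$-precede $g$. The resolution is that $\min P\le k$, since $k\in\Vert(P)$: every point of the wrapped portion has label $>\max Q>k$, so $\min P$ cannot lie there and must lie in the portion of the gap strictly after $g$, giving $g\prec\min P$. In the non-wrapping case every point of the gap, in particular $\min P$, already satisfies $g\prec\min P$. Thus the whole argument rests on the two monotonicity regimes of $\prec$ combined with the inequalities $\min Q<k$ and $\min P\le k$, the bookkeeping of the wrap-around at $n+1$ being the only delicate point.
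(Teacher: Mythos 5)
Your proof is correct. There is nothing in the paper to compare it against: the lemma is introduced there with ``The following is clear'' and no argument is given, so what you have supplied is precisely the verification the paper omits. Your route is sound: linearizing the counterclockwise traversal into the order $\prec$ (with its two monotone regimes, right side decreasing then left side increasing), reducing statement (2) to statement (1) by the left--right mirror of the picture, and then sandwiching $\min Q\preceq g\prec\min P$ around the gap of $Q$ that contains $k$. The wrap-around of the gap past the basepoint $n+1$ is exactly the place where a careless version of this argument would break, and you isolate and resolve it correctly (every point of the wrapped stretch has label $>\max Q>k\geq \min P$, so $\min P$ cannot sit there). Your mirror reduction is also consistent with the paper's later Remark~\ref{rmq:inversec}, which records that passing from $c$ to $c^{-1}$ mirrors the graphical representation, exchanges $L_c$ and $R_c$, and reverses the order on polygons. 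Two harmless imprecisions, neither of which affects the proof: your ``elementary consequence'' ($m<k$ and $k\in L_c$ imply $m\prec k$) is false for $k=n+1$, which is the $\prec$-minimum, but a nested $k$ satisfies $2\le k\le n$, so this case never occurs; and your characterization of the wrap-around case should exclude $\max Q=n+1$ (there $n+1$ is a boundary vertex of the gap rather than an interior point of it), but in that situation the set of gap points $\prec$-preceding $g$ is empty, so your conclusion $g\prec\min P$ holds a fortiori.
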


\subsection{Exotic lattice structure on noncrossing partitions}\label{sub:lattice}

\begin{definition}\label{def:vertical}
Let $x\in\NC(\Sn,c)$. The \defn{vertical vector} of $x$ is the $n$-tuple of nonnegative integers $(x_i^c)_{i=1}^n$, where $x_i^c$ is the number of occurrences of the letter $s_i$ in a standard form $m_x^c$ of $x$. It is well-defined since standard forms are unique up to commutation of syllables. 
\end{definition}
The terminology comes from a realization of the standard forms as order ideals in a labeled root poset (see \cite[Section 5.2]{GobWil}).
\begin{exple}\label{explecc}
Let $c=(1,3,4,2)$, then $x=(1,3)(2,4)\in\NC(\Sn,c)$ and a standard form of $x$ is given by $(s_2 s_1 s_2)(s_3 s_2 s_3)$. Note that it is not an $\S$-reduced expression of $x$ since $\ell(x)=4$. The corresponding vertical vector is given by $(1,3,2)$. 
\end{exple} 

\begin{rmq}
The set of vertical vectors obtained from all the noncrossing partitions in $\NC(\Sn,c)$ is characterized in \cite[Proposition 6.4]{GobWil}. It is independent of the choice of the Coxeter element $c$ by \cite[Theorem 1.3]{GobWil}. We denote it by $\mathcal{V}_n$.
\end{rmq}
We order $\NC(\Sn, c)$ by the componentwise order $\leqv$ on the corresponding vertical vectors, that is, $$x \leqv y~\Leftrightarrow~x_i^c\leq y_i^c~\forall i=1,\dots,n.$$ This endows $\NC(\Sn,c)$ with a graded distributive lattice structure, isomorphic to the lattice of nonnesting partitions (see \cite[Section 7 and Theorems 1.2 and 1.3]{GobWil}). See Figure~\ref{figure:a3} for an example.

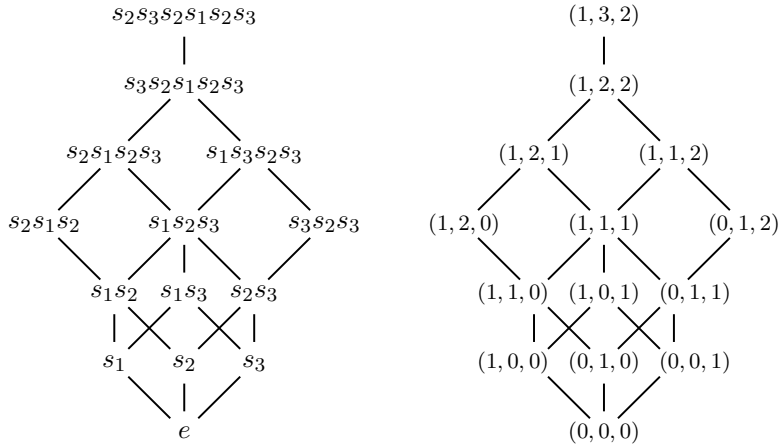
\begin{figure}[htbp]
\psscalebox{0.92}{\begin{pspicture}(0,0)(10,6.2)
\psline(1.8,0.2)(1.2,0.8)
\psline(1.2,1.2)(1.8,1.8)
\psline(2.8,1.2)(2.2,1.8)
\psline(2.8,2.2)(2.2,2.8)
\psline(2.8,4.2)(2.2,4.8)
\psline(0.8,2.2)(0.2,2.8)
\psline(0.8,3.8)(0.2,3.2)
\psline(3.2,2.2)(3.8,2.8)
\psline(3.8,3.2)(3.2,3.8)
\psline(2,0.3)(2,0.7)
\psline(2,2.3)(2,2.7)
\psline(2,5.3)(2,5.7)
\psline(1,1.3)(1,1.7)
\psline(3,1.3)(3,1.7)
\psline(2.2,0.2)(2.8,0.8)
\psline(1.8,1.2)(1.2,1.8)
\psline(1.8,3.2)(1.2,3.8)
\psline(2.2,1.2)(2.8,1.8)
\psline(2.2,3.2)(2.8,3.8)
\psline(1.2,2.2)(1.8,2.8)
\psline(1.2,4.2)(1.8,4.8)
\rput(2,0){$e$}
\rput(1,1){$s_1$}
\rput(2,1){$s_2$}
\rput(3,1){$s_3$}
\rput(1,2){$s_1 s_2$}
\rput(2,2){$s_1 s_3$}
\rput(3,2){$s_2 s_3$}
\rput(2,3){$s_1 s_2 s_3$}
\rput(0,3){$s_2 s_1 s_2$}
\rput(4,3){$s_3s_2s_3$}
\rput(1,4){$s_2s_1s_2s_3$}
\rput(3,4){$s_1 s_3s_2s_3$}
\rput(2,5){$s_3s_2s_1s_2s_3$}
\rput(2,6){$s_2s_3s_2s_1s_2s_3$}
\psline(7.8,0.2)(7.2,0.8)
\psline(7.2,1.2)(7.8,1.8)
\psline(8.8,1.2)(8.2,1.8)
\psline(8.8,2.2)(8.2,2.8)
\psline(8.8,4.2)(8.2,4.8)
\psline(6.8,2.2)(6.2,2.8)
\psline(8.8,3.8)(8.2,3.2)
\psline(9.2,2.2)(9.8,2.8)
\psline(9.8,3.2)(9.2,3.8)
\psline(8,0.3)(8,0.7)
\psline(8,2.3)(8,2.7)
\psline(7,1.3)(7,1.7)
\psline(8.2,0.2)(8.8,0.8)
\psline(7.8,1.2)(7.2,1.8)
\psline(7.8,3.2)(7.2,3.8)
\psline(8.2,1.2)(8.8,1.8)
\psline(6.2,3.2)(6.8,3.8)
\psline(7.2,2.2)(7.8,2.8)
\psline(7.2,4.2)(7.8,4.8)
\psline(9,1.3)(9,1.7)
\psline(8,5.3)(8,5.7)
\rput(8,0){\footnotesize $(0,0,0)$}
\rput(6.7,1){\footnotesize $(1,0,0)$}
\rput(8,1){\footnotesize $(0,1,0)$}
\rput(9.3,1){\footnotesize $(0,0,1)$}
\rput(6.7,2){\footnotesize $(1,1,0)$}
\rput(8,2){\footnotesize $(1,0,1)$}
\rput(9.3,2){\footnotesize $(0,1,1)$}
\rput(8,3){\footnotesize $(1,1,1)$}
\rput(6,3){\footnotesize $(1,2,0)$}
\rput(10,3){\footnotesize $(0,1,2)$}
\rput(7,4){\footnotesize $(1,2,1)$}
\rput(9,4){\footnotesize $(1,1,2)$}
\rput(8,5){\footnotesize $(1,2,2)$}
\rput(8,6){\footnotesize $(1,3,2)$}
\end{pspicture}}
\caption{The elements of $\NC(\mathfrak{S}_4,s_1s_2s_3)$ and the corresponding set $\mathcal{V}_3$ of vertical vectors, endowed with the partial order $\leqv$. Note that the lattice on the left coincides with the restriction of Bruhat order to $\NC(\mathfrak{S}_4, s_1s_2s_3)$.}
\label{figure:a3}
\end{figure}
\begin{rmq}
Note that the above lattice structure already occurred in the framework of Temperley-Lieb algebras in \cite{GS}\footnote{The author thanks Frédéric Chapoton for pointing out this reference to his attention.}. The lattice there comes from a partial order on $\F(\Sn)$ (which index a basis of the Temperley-Lieb algebra, see Section~\ref{tl:diagram} below). Under our bijection $\varphi$ (which we will show in Section~\ref{sec:triang} to come from a triangular base change in the Temperley-Lieb algebra), the lattice on $\leqv$ can be transformed into a lattice on $\F(\Sn)$ which is different from that of \cite{GS}: the fully commutative elements lie in different places on the two Hasse diagrams. We do not know whether there is a link between the results in~\cite{GS} and ours.
\end{rmq}

\section{Dual braid monoids and lifts of noncrossing partitions}

\subsection{Artin braid group and dual braid monoids}
The reference for this material is mainly \cite{Dual}. Let $(\mathcal{W}, \mathcal{S})$ be a finite Coxeter system. Here we use the notation from the beginning of Subsection \ref{sec:nc_algebraic}. Write $B=B(\mathcal{W},\mathcal{S})$ for the \defn{Artin-Tits group} associated to $(\W,\S)$. It is the group generated by a copy $\mathbf{S}:=\{\mathbf{s}~|~s\in \S\}$ (called the \defn{Artin generators}) of $\S$ subject only to the braid relations, that is, $$B=\langle\mathbf{S}\mid \underbrace{\mathbf{s}\mathbf{t}\cdots}_{m_{st}}=\underbrace
{\mathbf{t}\mathbf{s}\cdots}_{m_{ts}} \text{ for } \mathbf{s},
\mathbf{t}\in\mathbf{S}\rangle.$$
We call a word in $\mathbf{S}\cup\mathbf{S}^{-1}$ a \defn{braid word}. We denote by $p:B\longrightarrow\mathcal{W}$ the canonical surjection defined on the generators by $p(\mathbf{s})=s$. 

Let $c\in\Std(\W)$. Following an idea of Birman, Ko and Lee~\cite{BKL}, Bessis \cite[Definition 2.1.1]{Dual} defined the \defn{dual braid monoid} $B_c^*$ associated to $(\mathcal{W}, \mathcal{S}, c)$ as the monoid generated by a copy $\{i_c(t)~|~t\in\mathcal{T}\}$ of $\mathcal{T}=\bigcup_{w\in\W} w\S w^{-1}$ with relations $$i_c(t) i_c(t')=i_c(tt't)i_c(t),~\text{if }tt'\in\NC(\W,c).$$
The relations above are the \defn{dual braid relations}. There is an embedding $B_c^*\hookrightarrow B$ and the group of fractions of $B_c^*$ is isomorphic to $B$ (\cite[Theorem 2.2.5]{Dual}) and the left-divisibility relation is a partial order on $B_c^*$ endowing it with a lattice structure (in fact, $B_c^*$ is a \defn{Garside monoid} \cite[Section 2.3]{Dual}, which implies some of these properties; we do not introduce the machinery of Garside theory here \cite{DDGKM} since it is not required for our purposes). For any $x\in\NC(\W,c)$ with a $\T$-reduced expression $t_1\cdots t_k$ the product $$i_c(t_1)\cdots i_c(t_k)$$ in $B_c^*$ is independent of the chosen reduced expression (see \cite[Section 1.6]{Dual}). We denote it by $i_c(x)$. The elements of the form $i_c(x)$ where $x\in\NC(\W,c)$ form the set of \defn{simple elements} of $B_c^*$ or \defn{simples} or which we denote by $\bD_c$ (such a set is associated to any Garside monoid, see \cite{DDGKM}). To emphasize that we work with the dual Garside structure we will call them \defn{simple dual braids}. The restriction of the left divisibility order on $B_c^*$ endows $\bD_c$ with a lattice structure, and the canonical bijection $$(\NC(\W,c), \leq_\T) \overset{\sim}{\longrightarrow} \bD_c, ~x\mapsto i_c(x)$$ is an isomorphism of posets. We will also denote by $i_c(x)$ the image of it in $B$ under the canonical embedding. 

The embedding $B_c^*\hookrightarrow B$ maps $i_c(s)$ to $\mathbf{s}$ for any $s\in S$. Since $B_c^*$ is generated by a copy of $\T$ one would also like to express the elements $i_c(t)$ (for $t\in\T\backslash\S$) as a word in $\mathbf{S}\cup\mathbf{S}^{-1}$. A general formula for the lifts of reflections is given in \cite[Proposition 3.13]{DG}, but it might not give a braid word of smallest possible length --- for our purposes we would like such a word, at least for the lifts of the reflections. In type $A_n$ we address this question in Lemma~\ref{lem:atome} below.

\begin{exple}\label{ex:sa}
Let $\W=\mathfrak{S}_3$. Let $c=s_1 s_2=(1,2,3)$. Then $\lt(c)=2$ and the poset $\NC(\mathfrak{S}_3, c)$ has $5$ elements. One has $i_c(s_1)=\mathbf{s}_1$, $i_c(s_2)=\mathbf{s}_2$. There is a dual braid relation $$i_c(s_1) i_c (s_2)= i_c (s_2) i_c(s_2 s_1 s_2),$$
hence inside $B$ we have $$i_c(s_2 s_1 s_2)= \mathbf{s}_2^{-1} \mathbf{s}_1 \mathbf{s}_2=\mathbf{s}_1 \mathbf{s}_2 \mathbf{s}_1^{-1}.$$
\end{exple}

\subsection{Simple dual braids of type $A_n$}
From now on, $(\W,\S)$ is of type $A_n$, $c\in\Std(\W)$ with the conventions already introduced. In that case $B$ is the Artin braid group on $n+1$ strands (see \cite{KaTu}). The aim of this section is to choose specific braid words to represent the elements of $\bD_c$ inside $B$ as in Example \ref{ex:sa}. We first treat the case of the reflections.

\begin{lemma}\label{lem:atome}
Let $i,j\in[n+1]$, $j< i$. Let $t\in\mathcal{T}$ be the transposition $(j, i)$. Then $i_{c}(t)$ is represented in $B$ by the braid word $$\mathbf{s}_{[i,j]}^c:=\mathbf{s}_{i-1}^{\varepsilon_{i-1}}\cdots \mathbf{s}_{j+1}^{\varepsilon_{j+1}}\mathbf{s}_j\mathbf{s}_{j+1}^{-\varepsilon_{j+1}}\cdots \mathbf{s}_{i-1}^{-\varepsilon_{i-1}},$$
where for all $k\in [j+1,i-1]$, $\varepsilon_k=1$ if $k\in L_{c}$, $\varepsilon_k=-1$ if $k\in R_{c}$.  
\end{lemma}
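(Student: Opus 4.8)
The plan is to argue by induction on $i-j$. If $i-j=1$ then $t=(j,j+1)=s_j$ is a simple reflection, the word $\mathbf{s}_{[i,j]}^c$ is just $\mathbf{s}_j$ (the conjugating part is empty), and the claim is precisely the fact recalled above that the embedding $B_c^*\hookrightarrow B$ sends $i_c(s)$ to $\mathbf{s}$.

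For the inductive step I would first note the purely formal identity
$$\mathbf{s}_{[i,j]}^c=\mathbf{s}_{i-1}^{\varepsilon_{i-1}}\,\mathbf{s}_{[i-1,j]}^c\,\mathbf{s}_{i-1}^{-\varepsilon_{i-1}},$$
read directly off the definition of the word (the exponents $\varepsilon_k$ for $k\in[j+1,i-2]$ depend only on $k$, so they are unchanged). By the induction hypothesis $\mathbf{s}_{[i-1,j]}^c=i_c(j,i-1)$, and since $(i-1,i)=s_{i-1}$ is simple we have $\mathbf{s}_{i-1}=i_c(i-1,i)$. Thus it suffices to prove the identity in $B$
$$i_c(j,i)=i_c(i-1,i)^{\varepsilon_{i-1}}\,i_c(j,i-1)\,i_c(i-1,i)^{-\varepsilon_{i-1}}.$$
Here the three reflections $(j,i-1)$, $(i-1,i)$, $(j,i)$ are the edges of the triangle on the vertices $\{j,i-1,i\}$, so exactly one of the two $3$-cycles $(j,i-1,i)$, $(j,i,i-1)$ lies in $\NC(\Sn,c)$ and gives a $c$-admissible triangle (Definition~\ref{def:admiss}). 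Applying the dual braid relation $i_c(t)i_c(t')=i_c(tt't)i_c(t)$ with $t=(i-1,i)$ and $t'=(j,i-1)$ (when the admissible cycle is $(j,i,i-1)$) or $t'=(j,i)$ (when it is $(j,i-1,i)$), and then inverting $i_c(i-1,i)$ inside $B$, yields exactly the displayed conjugation; a direct check shows that the first case produces the exponent $+1$ and the second the exponent $-1$.

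Everything therefore reduces to the geometric claim, which I expect to be the main obstacle: the $c$-admissible triangle on $\{j,i-1,i\}$ is $(j,i,i-1)$ precisely when $i-1\in L_c$ and is $(j,i-1,i)$ precisely when $i-1\in R_c$, so that the forced exponent is exactly $\varepsilon_{i-1}$. To establish it I would work in the normalized drawing of Subsection~\ref{sec:conv}: point $1$ at the top, point $n+1$ at the bottom, labels increasing downward on each side, and (Remark~\ref{rmq:successifs}) reading clockwise one descends the right side $R_c$ through increasing labels to $n+1$ and then ascends the left side $L_c$ through decreasing labels. Because $i-1$ and $i$ are consecutive labels, their relative clockwise position depends only on the side of $i-1$: if $i-1\in R_c$ then $i-1$ precedes $i$, while if $i-1\in L_c$ then $i$ precedes $i-1$ (the four subcases according to the sides of $i-1$ and $i$ all follow from the fact that every interior right point has smaller angular coordinate than every interior left point). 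A short case distinction on the side of $j$, using $j<i-1$, then shows that $j$ is never inserted between $i-1$ and $i$, so the cyclic order is the asserted one; the boundary values $j=1$ (angular coordinate $0$) and $i=n+1$ (angular coordinate $\pi$) are handled by the same positional argument.
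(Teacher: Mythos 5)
Your proof is correct and follows essentially the same route as the paper's: induction on $i-j$, with the inductive step handled by a dual braid relation coming from the $c$-admissible triangle on $\{j,i-1,i\}$, where the case $i-1\in L_c$ versus $i-1\in R_c$ determines the sign of the conjugating letter $\mathbf{s}_{i-1}^{\pm 1}$. The only difference is one of detail: the paper simply asserts which ordered triple is $c$-admissible in each case, whereas you verify the underlying geometric claim (the clockwise cyclic order of $j,i-1,i$ on the $c$-labeled circle) explicitly, which is a harmless strengthening of the same argument.
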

Note that in general $\mathbf{s}_{[i,j]}^c\neq \mathbf{s}_{[i,j]}^{c'}$ for $c\neq c'$.
\begin{proof}
We argue by induction on $i-j$. If $i-j=1$, then $(j,i)=(j,j+1)=s_j$ and $\mathbf{s}_j$ is equal to the image of $i_{c}(s_j)$ in $B$. Assume that $i-j>1$. If $i-1\in R_{c}$, then the triple $((j,i-1), s_{i-1}, (j,i))$ is a $c$-admissible triangle (see Definition~\ref{def:admiss}). Hence we have the dual braid relation $$i_{c}((j,i-1))i_{c}(s_{i-1})=i_{c}(s_{i-1})i_{c}((j, i)).$$
By induction we have $i_c((j,i-1))=\mathbf{s}_{i-2}^{\varepsilon_{i-2}}\cdots \mathbf{s}_{j+1}^{\varepsilon_{j+1}}\mathbf{s}_j\mathbf{s}_{j+1}^{-\varepsilon_{j+1}}\cdots \mathbf{s}_{i-2}^{-\varepsilon_{i-2}}$
where for all $k\in[j+1,i-2]$, $\varepsilon_k=1$ if $k\in L_{c}$, $\varepsilon_k=-1$ if $k\in R_{c}$. Hence $i_c((j,i))$ is represented in $B$ by the braid word
$$\mathbf{s}_{i-1}^{-1} (\mathbf{s}_{i-2}^{\varepsilon_{i-2}}\cdots \mathbf{s}_{j+1}^{\varepsilon_{j+1}}\mathbf{s}_j\mathbf{s}_{j+1}^{-\varepsilon_{j+1}}\cdots \mathbf{s}_{i-2}^{-\varepsilon_{i-2}}) \mathbf{s}_{i-1}$$
which gives the claimed formula in that case. 

If $i-1\in L_{c}$, then the triple $((j,i),s_{i-1},(j,i-1))$ is a $c$-admissible triangle. Hence we have the relation $$i_{c}(s_{i-1})i_{c}((j,i-1))=i_{c}((j, i))i_{c}(s_{i-1})$$ from which we similarly derive the claimed formula.
\end{proof}
In the well-known representation of $B$ by Artin braids (see \cite{KaTu}), with the convention that $\mathbf{s}_i$ has the $i$\ts{th} strand below the $(i+1)$\ts{th}, it follows from Lemma~\ref{lem:atome} that the braid $\mathbf{s}_{[i,j]}$ is represented by a crossing of the $i$\ts{th} and $j$\ts{th} strand, with the $j$\ts{th} strand below; the other strands are unbraided, with a $k$\ts{th} strand above the crossing if $k\in R_c$ and below if $k\in L_c$ (we concatenate Artin braids from top to bottom). Hence the lift of $t$ in $B$ can easily be described as an Artin braid --- an Example is given in Figure \ref{fig:liftref}: replacing the edge representing the reflection in the circle representation with $c$-labeling by a loop with a counterclockwise orientation gives the Artin braid viewed in a cylinder.  

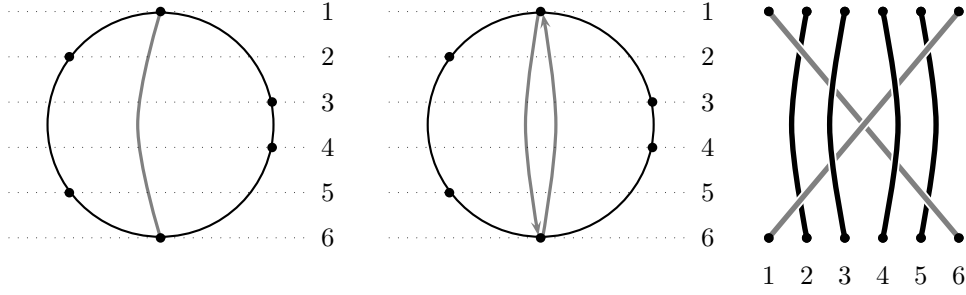
\begin{figure}[h!]
\psscalebox{1}{
\begin{pspicture}(3.9,-1)(10.9,3)
\pscircle(3,1.5){1.5}
\pscurve[linecolor=gray, linewidth=1.2pt](3,3)(2.7, 1.5)(3, 0)

\psline[linestyle=dotted, linewidth=0.4pt](1,3)(5,3)
\psline[linestyle=dotted, linewidth=0.4pt](1,2.4)(5,2.4)
\psline[linestyle=dotted, linewidth=0.4pt](1,1.8)(5,1.8)
\psline[linestyle=dotted, linewidth=0.4pt](1,1.2)(5,1.2)
\psline[linestyle=dotted, linewidth=0.4pt](1,0.6)(5,0.6)
\psline[linestyle=dotted, linewidth=0.4pt](1,0)(5,0)

\psdots(3,3)(1.8, 2.4)(4.4696, 1.8)(4.4696, 1.2)(1.8, 0.6)(3,0)
\rput(5.2,3){\small $1$}
\rput(5.2,2.4){\small $2$}
\rput(5.2,1.8){\small $3$}
\rput(5.2,1.2){\small $4$}
\rput(5.2,0.6){\small $5$}
\rput(5.2,0){\small $6$}

\pscircle(8,1.5){1.5}
\pscurve[linecolor=gray, linewidth=1.2pt]{->}(7.97,2.97)(7.8, 1.5)(7.97, 0.03)
\pscurve[linecolor=gray, linewidth=1.2pt]{->}(8.03,0.03)(8.2, 1.5)(8.03,2.97)

\psline[linestyle=dotted, linewidth=0.4pt](6,3)(10,3)
\psline[linestyle=dotted, linewidth=0.4pt](6,2.4)(10,2.4)
\psline[linestyle=dotted, linewidth=0.4pt](6,1.8)(10,1.8)
\psline[linestyle=dotted, linewidth=0.4pt](6,1.2)(10,1.2)
\psline[linestyle=dotted, linewidth=0.4pt](6,0.6)(10,0.6)
\psline[linestyle=dotted, linewidth=0.4pt](6,0)(10,0)

\psdots(8,3)(6.8, 2.4)(9.4696, 1.8)(9.4696, 1.2)(6.8, 0.6)(8,0)
\rput(10.2,3){\small $1$}
\rput(10.2,2.4){\small $2$}
\rput(10.2,1.8){\small $3$}
\rput(10.2,1.2){\small $4$}
\rput(10.2,0.6){\small $5$}
\rput(10.2,0){\small $6$}

\psdots(11,3)(11.5,3)(12,3)(12.5,3)(13,3)(13.5,3)

\psdots(11,0)(11.5,0)(12,0)(12.5,0)(13,0)(13.5,0)

\pscurve[linewidth=2pt](11.5,0)(11.3,1.5)(11.5,3)
\pscurve[linewidth=2pt](13,0)(13.2,1.5)(13,3)
\psline[linewidth=3.4pt, linecolor=white](11,3)(13.5,0)
\psline[linewidth=2pt, linecolor=gray](11,3)(13.5,0)
\psline[linewidth=3.4pt, linecolor=white](11,0)(13.5,3)
\psline[linewidth=2pt, linecolor=gray](11,0)(13.5,3)
\pscurve[linewidth=3.4pt, linecolor=white](12,3)(11.8,1.5)(12,0)
\pscurve[linewidth=2pt](12,3)(11.8,1.5)(12,0)
\pscurve[linewidth=3.4pt, linecolor=white](12.5,3)(12.7,1.5)(12.5,0)
\pscurve[linewidth=2pt](12.5,3)(12.7,1.5)(12.5,0)

\psdots(11,3)(11.5,3)(12,3)(12.5,3)(13,3)(13.5,3)

\psdots(11,0)(11.5,0)(12,0)(12.5,0)(13,0)(13.5,0)

\rput(11,-0.5){\small $1$}
\rput(11.5,-0.5){\small $2$}
\rput(12,-0.5){\small $3$}
\rput(12.5,-0.5){\small $4$}
\rput(13,-0.5){\small $5$}
\rput(13.5,-0.5){\small $6$}

\end{pspicture}}
\caption{Lift $i_c(t)$ of the transposition $t=(1,6)\in\NC(\mathfrak{S}_6,c)$, where $c=(1,3,4,6,5,2)$.}
\label{fig:liftref}
\end{figure}

The aim is now to represent any element $i_c(x)$, $x\in\NC(\Sn,c)$ by a braid word as we did for $i_c(t)$ above. To this end, we simply lift a standard form $m_x^c$ of $x$ by lifting any syllable of it using Lemma~\ref{lem:atome}. Since a standard form is built starting with a $\T$-reduced expression such a process yields a word representing $i_c(x)$ in $B$ by definition of $B_c^{*}$. We denote a lifted standard form of $x$ by $\bm_x^c$. We also call $\bm_x^c$ a \defn{standard form} of $i_c(x)$.

\begin{exple}
Let $c=(1,2,5,4,3)\in\Std(\mathfrak{S}_5)$, let $x=(1,3)(2,5,4)$. A standard form $m_{c_1}^c$ of $c_1:=(1,3)$ is given by $s_2 s_1 s_2$ while writing $c_2:=(2,5,4)$ as $(4,5)(2,4)$ we get a standard form $m_{c_2}^c= s_4 (s_3 s_2 s_3)$. We have $P<Q$ where $P,Q\in\Pol(x)$, $\Vert(P)=\{2,4,5\}$ are such that $\Vert(Q)=\{ 1,3\}$, hence a standard form of $x$ is $$m_x^c=m_{c_2}^c m_{c_1}^c=s_4 (s_3 s_2 s_3)(s_2 s_1 s_2).$$
Note that $m_x^c$ is not $\S$-reduced since $x= s_4 s_2 s_3 s_1 s_2$. Lifting each syllable using Lemma~\ref{lem:atome} we get a braid word for $i_c(x)$ given by 
$\bm_x^c=\mathbf{s}_4 (\mathbf{s}_3 \mathbf{s}_2 \mathbf{s}_3^{-1})(\mathbf{s}_2^{-1} \mathbf{s}_1 \mathbf{s}_2).$

\end{exple}

\begin{rmq}
Simple dual braids of type $A_n$ turn out to be so-called \defn{Mikado braids} (see~\cite[Section 5.3]{DG}) which in particular implies that any element $i_c(x)$ can be represented by a braid word of length $\ls(x)$. The standard form of $i_c(x)$ or $x$ is in general of bigger length than $\ls(x)$ as we already pointed out above. However the standard forms will allow us to describe orders giving triangular bases changes, which are precisely those introduced in Subsection \ref{sub:lattice} for which the use of $m_x^c$ and the fact that it is a possibly non-reduced word is crucial. 
\end{rmq}

\section{Classical Temperley-Lieb algebra}

The aim of this section is to introduce the classical Temperley-Lieb algebra and its diagram basis indexed by fully commutative permutations. There is a multiplicative homomorphism from the type $A_n$ Artin braid group to this algebra, and we also give some facts about images of simple dual braids inside the Temperley-Lieb algebra. 

\subsection{The Temperley-Lieb algebra and its diagram basis}\label{tl:diagram}

Let $n\geq 1$.
\begin{definition}
The (classical) \defn{Temperley-Lieb algebra} $\tl$ is the associative, unital $\A$-algebra with generators $b_1,\dots, b_n$ and relations 
\begin{eqnarray}
 b_j b_i b_j =b_j~\text{if $|i-j|=1$},\\
 b_i b_j= b_j b_i~\text{if $|i-j|>1$},\\
 b_i^2=(v+v^{-1}) b_i.
\end{eqnarray}
\end{definition}

  \begin{proposition}[Jones, \cite{JonesSub}]\label{prop:jones} 
Let $w\in \F(\Sn)$. To any $\S$-reduced expression $s_{i_1} s_{i_2}\cdots s_{i_k}$ of $w$, we associate the element $b_{i_1} b_{i_2}\cdots b_{i_k}$ of $\tl$.
  \begin{enumerate}
  \item The product $b_{i_1} b_{i_2}\cdots b_{i_k}$ is independent of the choice of the $\S$-reduced expression of $w$ and is therefore denoted by $b_w$.
  \item The set $\{b_w\}_{w\in\F(\Sn)}$ is a basis of $\tl$ as a $\A$-module, in particular $\tl$ is a free $\A$-module of rank $C_{n+1}$.
  \item For any sequence $j_1 j_2\cdots j_m$ of integers in $[n]$, there exists a unique pair $(x,k)\in\F(\Sn)\times\mathbb{Z}_{\geq 0}$ such that
  $$b_{j_1} b_{j_2}\cdots b_{j_m}=(v+v^{-1})^k b_x.$$
  \end{enumerate}
  \end{proposition}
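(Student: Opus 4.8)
The plan is to prove assertion (1) together with the existence part of (3) by a direct rewriting argument inside $\tl$, and then to obtain linear independence --- hence assertion (2) and the uniqueness in (3) --- by comparing $\tl$ with the diagram algebra of planar non-crossing matchings. Throughout set $\delta:=v+v^{-1}$. Assertion (1) is the easy one: by Definition~\ref{defn:fully}, any two $\S$-reduced expressions of a fully commutative $w$ differ by a sequence of commutations $s_is_j\leftrightarrow s_js_i$ with $|i-j|>1$, and since $\tl$ satisfies $b_ib_j=b_jb_i$ in exactly this range, each such move leaves the product $b_{i_1}\cdots b_{i_k}$ unchanged. Thus $b_w$ is well defined.

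For the existence part of (3) --- and simultaneously for the fact that $\{b_w\}_{w\in\F(\Sn)}$ spans $\tl$ --- I would establish the rule for right multiplication of a basis element $b_w$ (with $w\in\F(\Sn)$) by a generator $b_j$, and then induct on the length $m$ of a word $b_{j_1}\cdots b_{j_m}$. The rule splits into three cases. If $\ls(ws_j)>\ls(w)$ and $ws_j\in\F(\Sn)$, then appending $s_j$ to a reduced word of $w$ is a reduced word of $ws_j$, so $b_wb_j=b_{ws_j}$. If $\ls(ws_j)<\ls(w)$, write $w=us_j$ reduced with $u\in\F(\Sn)$; then $b_wb_j=b_ub_j^2=\delta\, b_w$. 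The remaining case $\ls(ws_j)>\ls(w)$ with $ws_j\notin\F(\Sn)$ is where the combinatorics of fully commutative elements enters: here I would show there is a neighbour $i$ of $j$ (so $|i-j|=1$) for which $w$ admits a reduced word ending in $s_js_i$, say $w=vs_js_i$; then $b_wb_j=b_v(b_jb_ib_j)=b_vb_j=b_{ws_i}$, with $ws_i\in\F(\Sn)$ of length $\ls(w)-1$. In every case $b_wb_j=\delta^{\varepsilon}b_x$ for a single $x\in\F(\Sn)$ and $\varepsilon\in\{0,1\}$, so the induction (starting from $b_{\mathrm{id}}=1$) gives $b_{j_1}\cdots b_{j_m}=\delta^kb_x$ for some $x\in\F(\Sn)$ and $k\ge 0$, which also shows that the $b_w$ span $\tl$.

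To obtain independence I would introduce the $\A$-algebra $D_n$ of planar non-crossing matchings of $n+1$ points on a top edge with $n+1$ points on a bottom edge, with multiplication given by vertical stacking, each resulting closed loop being deleted at the cost of a scalar $\delta$. By construction $D_n$ is a free $\A$-module whose basis is the set of such diagrams, of cardinality $C_{n+1}$. Sending $b_i$ to the diagram $E_i$ (a cap joining the top points $i,i+1$, a cup joining the bottom points $i,i+1$, and through-strands elsewhere) defines an algebra homomorphism $\Phi:\tl\to D_n$: one checks the three defining relations diagrammatically, namely $E_i^2=\delta E_i$, $E_jE_iE_j=E_j$ for $|i-j|=1$, and $E_iE_j=E_jE_i$ for $|i-j|>1$. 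Since every diagram is a product of the $E_i$, the map $\Phi$ is surjective.

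Finally I would combine the two halves. The span statement gives a surjection $\pi:\A^{C_{n+1}}\twoheadrightarrow\tl$, $e_w\mapsto b_w$ (using $|\F(\Sn)|=C_{n+1}$), and composing with $\Phi$ and the identification $D_n\cong\A^{C_{n+1}}$ yields a surjective $\A$-linear endomorphism $\Phi\circ\pi$ of $\A^{C_{n+1}}$. A surjective endomorphism of a finitely generated module over the commutative ring $\A$ is an isomorphism, so $\pi$ is injective; hence $\{b_w\}_{w\in\F(\Sn)}$ is an $\A$-basis of $\tl$ (and $\Phi$ is an isomorphism), proving (2). Uniqueness in (3) then follows: if $\delta^kb_x=\delta^{k'}b_{x'}$ with $x,x'\in\F(\Sn)$, linear independence forces $x=x'$, and then $\delta^k=\delta^{k'}$ in the integral domain $\A$ with $\delta$ a non-unit forces $k=k'$. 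The step I expect to be the real obstacle is the third case of the multiplication rule, that is, locating the neighbour $i$ and the reduced word $w=vs_js_i$: this is precisely where the structure of reduced words of fully commutative elements (the normal form of Definition~\ref{defn:fully} together with Lemmas~\ref{lem:ij} and~\ref{lem:fullocc}) must be invoked, whereas the diagrammatic verifications and the module-theoretic endgame are routine.
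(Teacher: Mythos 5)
The paper itself offers no proof of this proposition --- it is quoted from Jones --- so your proposal must stand on its own; its architecture (commutation-invariance for (1), a right-multiplication rule $b_w b_j$ yielding spanning and existence in (3), then a diagram algebra and a rank count for linear independence) is the standard one. Part (1), the first two cases of your multiplication rule, and the module-theoretic endgame are all correct (the surjectivity of $\Phi$, i.e.\ that every planar matching is a product of the $E_i$, is asserted rather than proved, but it is a routine induction).

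The genuine gap is the third case of the multiplication rule, precisely the step you flag as the crux: the structural claim is false as stated. Take $w=s_3s_2s_1\in\F(\Sn)$ (any $n\geq 3$) and $j=3$. This $w$ has a unique $\S$-reduced expression (its only right descent is $s_1$), hence is fully commutative and every reduced expression of it ends in $s_1$, so none ends in $s_3s_2$ or $s_3s_4$; nevertheless $\ls(ws_3)=4>3=\ls(w)$ and $ws_3=s_3s_2s_3s_1\notin\F(\Sn)$. Your claimed output formula fails as well: $b_wb_3=b_3b_2b_1b_3=b_3b_2b_3b_1=b_3b_1=b_{s_1s_3}$, whereas $ws_2$ and $ws_4$ both have length $4$, so $b_wb_3\neq b_{ws_i}$ for any neighbour $i$ of $j$. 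The correct lemma must allow commutations: if $\ls(ws_j)>\ls(w)$ and $ws_j\notin\F(\Sn)$, then $s_j$ occurs in $w$ and, after its last occurrence in a reduced expression, there is exactly one letter $s_i$ with $|i-j|=1$, all other letters commuting with $s_j$ (this follows from Lemma~\ref{lem:fullocc} applied to the reduced word of $ws_j$ obtained by appending $s_j$, since zero such letters would contradict $\ls(ws_j)>\ls(w)$ and one of each of $s_{j-1},s_{j+1}$ would make $ws_j$ fully commutative). Equivalently $w=vs_js_iu$ reduced with every letter of $u$ commuting with $s_j$, and then $b_wb_j=b_vb_jb_ib_ub_j=b_vb_jb_ib_jb_u=b_vb_jb_u=b_{vs_ju}$: the result is $w$ with that single letter $s_i$ deleted, a fully commutative element of length $\ls(w)-1$ which in general is \emph{not} $ws_i$. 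In the example above, $v=e$, $i=2$, $u=s_1$, giving $b_wb_3=b_{s_3s_1}$. With this corrected case the induction closes and the remainder of your argument goes through unchanged.
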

  
\begin{definition}
The basis $\{b_w\}_{w\in\F(\Sn)}$ will be called the \defn{diagram basis} (because it has a diagrammatic version, see~\cite{Kau}). We do not introduce the diagrammatics here since it is not required for our purposes. 
\end{definition}

\subsection{Images of simple dual braids inside the Temperley-Lieb algebra}

There is a well-known multiplicative homomorphism $\omega:\Bni\rightarrow \tl$ mapping any Artin generator $\mathbf{s}\in\mathbf{S}$ to $v^{-1}-b_i$ and its inverse $\mathbf{s}^{-1}$ to $v-b_i$. This can be seen for instance by checking that the elements $t_i:=v^{-1}-b_i\in\tl$ satisfy the braid relations, which is an easy computation using the defining relations of $\tl$. For $c\in\Std(\Sn)$ and $x\in\NC(\Sn,c)$ set $R_x^c:=\omega(i_c(x))\in\tl$. The set $\{ R_x^c\}_{x\in \NC(\Sn,c)}$ therefore consists of the images of the simple elements of $B_c^*$ in $\tl$. Since it is indexed by noncrossing partitions which have Catalan enumeration, one can wonder whether $\{ R_x\}$ gives a basis of $\tl$. The answer is given by

\begin{theorem}[{Zinno~\cite[Theorem 2]{Z}, Vincenti~\cite[Corollaire 5.2.9]{Vincenti}}]\label{thm:z}
The elements $\{R_x^c\}_{x\in\NC(\Sn,c)}$ form a $\mathbb{Z}[v, v^{-1}]$-basis of $\tl$. 
\end{theorem}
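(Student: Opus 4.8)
The plan is to prove that the transition matrix between $\{R_x^c\}_{x\in\NC(\Sn,c)}$ and the diagram basis $\{b_w\}_{w\in\F(\Sn)}$ is invertible over $\A$. Since $\tl$ is free of rank $C_{n+1}=|\NC(\Sn,c)|=|\F(\Sn)|$ over $\A$ (Proposition~\ref{prop:jones}), the family $\{R_x^c\}$ is automatically a basis once this matrix is invertible, and the cleanest way to secure invertibility is to exhibit an ordering making it triangular with diagonal entries that are units in $\A$. I would order $\NC(\Sn,c)$ by a linear extension of the lattice order $\leqv$ of Subsection~\ref{sub:lattice}, transport it to $\F(\Sn)$ through the bijection $\varphi$ of Theorem~\ref{thm:bijecgenerales}, and aim to show
$$R_x^c=\lambda_x\, b_{\varphi(x)}+\sum_{y\,<_{\mathcal{V}}\,x}\mu_{x,y}\,b_{\varphi(y)},\qquad \lambda_x\in\{\pm v^m\},\ \mu_{x,y}\in\A.$$
Triangularity with unit diagonal then forces invertibility, hence the basis property.

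Second, I would compute $R_x^c=\omega(i_c(x))$ explicitly by applying the multiplicative homomorphism $\omega$ to a lifted standard form $\bm_x^c$ (Lemma~\ref{lem:atome} and the paragraph defining $\bm_x^c$). Because $\omega$ is a homomorphism, $R_x^c$ factors as a product, over the syllables of $m_x^c$, of the images of the lifted syllables; expanding each factor via $\mathbf{s}_k^{\pm 1}\mapsto v^{\mp 1}-b_k$ turns $R_x^c$ into an $\A$-linear combination of monomials in the $b_i$'s. By Proposition~\ref{prop:jones}(3) each such monomial equals $(v+v^{-1})^k b_w$ for a unique $w\in\F(\Sn)$, so the expansion genuinely lands in the diagram basis. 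The point of using the standard form is that Lemma~\ref{std} pins down, through the centers and tops of the syllables, exactly which letters $s_k$ can occur, matching the normal-form data $(J_{\varphi(x)},I_{\varphi(x)})$ read off from Theorem~\ref{thm:bijecgenerales}.

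Third, and this is the heart, I would identify the leading term. The claim is that among all monomials arising in the expansion there is a distinguished one, obtained by extracting $-b_j$ at each center and exactly one $-b_k$ for each shoulder index $k\in[j+1,i-1]$ of each syllable, whose $b$-word reduces (with no surviving $(v+v^{-1})$ factor) to the normal form of $\varphi(x)$, and whose coefficient is a unit $\pm v^m$ --- the exponent $m$ being governed by the signs $\varepsilon_k$, i.e. by the partition $L_c\sqcup R_c$, since each unselected factor of a shoulder contributes precisely one power $v^{\pm1}$. One must then show that every other monomial reduces to some $b_{\varphi(y)}$ with $y<_{\mathcal{V}}x$. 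The essential inputs are the description of $\varphi$ in terms of initial, terminal and nested indices together with $L_c,R_c$ (Theorem~\ref{thm:bijecgenerales}), and the fact that $\leqv$ is governed by the vertical vectors $(x_i^c)$ counting the letters of $m_x^c$.

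The main obstacle I expect is exactly this triangularity analysis: showing that no monomial in the expansion produces an index $w$ with $\varphi^{-1}(w)\not<_{\mathcal{V}}x$, and that the leading coefficient is a genuine unit rather than a multiple of the \emph{non-invertible} element $v+v^{-1}$. Morally one wants to prove that any reduction strictly shortening the $b$-word strictly decreases the vertical vector; the interaction of the signs $\varepsilon_k$ with the collapses $b_jb_ib_j=b_j$ and $b_i^2=(v+v^{-1})b_i$ is delicate, and this is precisely where the specific, possibly non-reduced, shape of the standard form $m_x^c$ is indispensable. The argument is considerably lighter when $c=\cl$, where $\inv=\mathrm{id}$ and $m_x^{\cl}$ is already $\S$-reduced (Remark~\ref{rmq:bijcasc}), so I would first settle that case --- recovering Zinno's original theorem --- and then treat arbitrary $c$ by the same triangularity scheme.
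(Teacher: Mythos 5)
Your overall strategy is exactly the one the paper follows in Section~\ref{sec:triang}: expand $\omega(\bm_x^c)$ syllable by syllable so that every monomial $b_{i_1}\cdots b_{i_q}$ corresponds to a subword of $m_x^c$, identify a distinguished monomial giving $b_{\varphi(x)}$ with unit coefficient, and deduce triangularity with respect to a linear extension of $\leqv$ transported by $\varphi$. But the proposal stops precisely where the paper's work begins, and one of the deferred claims is not even well-posed as you state it. Your leading term, ``the center $-b_j$ and exactly one $-b_k$ per shoulder index,'' does not single out a monomial: every shoulder letter occurs twice in its syllable, the two choices yield genuinely different fully commutative elements (for the syllable $s_2s_1s_2$, choosing the left copy gives $s_2s_1$ and the right copy gives $s_1s_2$, and only one of these is $\varphi(x)$), and an arbitrary ``one per shoulder'' selection generally produces $b_{\varphi(y)}$ for some $y\neq x$. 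The paper resolves this with Definition~\ref{extracting}, where the left-versus-right choice is dictated by whether the letter is a center and whether its index lies in $L_c$ or $R_c$; proving that the resulting word $w_x^c$ is a reduced expression of a fully commutative element \emph{equal to} $\varphi(x)$ is Theorem~\ref{thm:bijsame}, whose proof requires the total order on polygons and the combinatorial Lemmas~\ref{lem:centre2}, \ref{lem:top}, \ref{lem:topcenter} and \ref{lem:occ}. None of this is supplied or sketched in your attempt.

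The two remaining claims you flag as ``the main obstacle'' are likewise the actual content, and your proposed route to them is not the one that works. For the unit diagonal, the missing idea is Lemma~\ref{lem:wxunique} ($w_x^c$ is the \emph{unique} subword of $m_x^c$ that is a reduced expression of $\varphi(x)$) combined with Lemma~\ref{lem:2sous} (a monomial that is not itself reduced yields its fully commutative element as a subword in at least two ways); together they show that no other monomial --- in particular none collapsing via $b_ib_{i\pm1}b_i=b_i$, which produces no $(v+v^{-1})$ factor and could a priori contaminate $c_{x,x}$ --- contributes to the coefficient of $b_{\varphi(x)}$, whence $c_{x,x}=\pm v^m$. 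For the off-diagonal terms, your ``moral'' principle that reductions shortening the $b$-word decrease the vertical vector is not the statement one can prove, and by itself it misses the real danger: by Lemmas~\ref{lem:stdvert} and \ref{lem:stdvert2}, if $s_i$ is a center of $m_x^c$ with $x_i^c=2n_i(w)-1$ while $s_i$ fails to be a center of $m_y^c$, then $y_i^c=2n_i(w)>x_i^c$ and triangularity breaks. Theorem~\ref{thm:zinnogen} excludes exactly this configuration by a case analysis on $i\in R_c$ versus $i\in L_c$, using the characterization of $\varphi$ in Theorem~\ref{thm:bijecgenerales} to show that such an $s_i$ must be a center of $m_y^c$ as well. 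Without this center-tracking argument, letter counting alone does not yield $y\leqv x$ for all $\varphi(y)\in\Subf(x)$, so the gap you acknowledged is genuine: the plan is the paper's, but the proof is not there.
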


\begin{rmq} Zinno proved the theorem above for $c=c_{\mathrm{lin}}$ (he works with slightly different conventions on the parameter but his theorem can be reformulated as above, see~\cite[Section 8.5]{DG}). Vincenti generalized Zinno's result to arbitrary standard Coxeter elements. 
\end{rmq}

\section{Triangular base changes}\label{sec:triang}
In this section, we show that if we order the basis $\{R_x^c\}$ (respectively $\{b_w\}$) by any linear extension of $\leqv$ (respectively by the order induced on $\F(\Sn)$ by taking the image under $\varphi$ of the chosen linear extension) then the base change between the two bases is upper triangular with invertible coefficients on the diagonal. Note that this gives in particular a new proof of Theorem~\ref{thm:z}, but our proof is more involved than the easiest known proof of Theorem~\ref{thm:z} (given by \cite{Vincenti} following an idea of \cite{LL}) since it is aimed at prooving that the base change is triangular and exhibiting explicitly the bijections induced on the underlying combinatorial objects indexing the bases.

Recall that $m_x^c$ is obtained as follows: write $x$ as a product of disjoint cycles, totally ordered as in Subsection~\ref{sub:std}. Replace each cycle by a distinguished expression of it. It gives a $\T$-reduced expression of $x$. Then replace each reflection in this expression by an $\S$-word for it (which we called a syllable). 

In the whole section, we use the notation $$m_x^c=w_1 w_2 \cdots w_\ell$$ to denote the $\S$-word $m_x^c$ as the concatenation of its various syllables $w_1, w_2,\dots, w_\ell$. 

\subsection{Properties of standard forms}

The aim of this subsection is to prove a few technical results on occurrences of letters in standard forms. 

\begin{rmq}\label{rmq:inversec}
Notice that inverting the Coxeter element $c$ reverses the order of the labeling of the circle, that is, we have $L_{c}=R_{c^{-1}}$ and $R_{c}=L_{c^{-1}}$. For $x\in\NC(\Sn,c)$ we have $x^{-1}\in\NC(\Sn,c^{-1})$ and the graphical representation of $x^{-1}$ is the image of the graphical representation of $x$ under the symmetry with vertical axis. The order on $\Pol(x)$ is the reverse order of the order on $\Pol(x^{-1})$. A simple transposition $s_k$ is a center of $m_x^{c}$ if and only if it is a center of $m_{x^{-1}}^{c^{-1}}$. An index $k$ is nested in (resp. is a vertex of) a polygon $P\in\Pol(x)$ if and only if it is nested in (resp. is a vertex of) a polygon $P'\in\Pol(x^{-1})$. Reversing a standard form for $x\in\NC(\Sn, c)$ we get a standard form for $x^{-1}\in\NC(\Sn, c^{-1})$
\end{rmq}

We can derive the following from Lemma~\ref{lem:centre1}

\begin{lemma}\label{lem:centre2}
Let $x\in \NC(\Sn,c)$. Let $s_k$ be the center of a syllable $w_i$ of $m_x^c$.
\begin{enumerate}
\item If $k\in L_c$ and $s_k$ occurs in a syllable $w_j$ with $i\neq j$, then $j <i$,
\item If $k\in R_c$ and $s_k$ occurs in a syllable $w_j$ with $i\neq j$, then $j>i$. 
\end{enumerate}
\end{lemma}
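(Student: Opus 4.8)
The plan is to translate the statement about occurrences of $s_k$ in the syllables into a statement about the relative position of two polygons, and then to invoke Lemma~\ref{lem:centre1}. First I would unwind the combinatorial meaning of the hypothesis. Since $s_k$ is the center of the syllable $w_i$, this syllable is the one attached to an edge $(k,k')$ of a polygon $P\in\Pol(x)$ (the center of the syllable attached to a transposition $(a,b)$ with $a<b$ is $s_a$), so in particular $k\in\Vert(P)$. I record this as the first key fact.

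Next I would locate the second occurrence of $s_k$. The syllable attached to an edge $(a,b)$ of a polygon, with $a<b$, is the word $s_{b-1}\cdots s_a\cdots s_{b-1}$, and so it has $\S$-support $\{s_a,\dots,s_{b-1}\}$; thus $s_k$ occurs in $w_j$ exactly when $a\le k<b$, where $(a,b)$ is the edge of the polygon, say $Q$, giving rise to $w_j$. By Remark~\ref{rmq:lettre1fois} distinct syllables of a single cycle have disjoint $\S$-support, so the fact that $s_k$ lies in the two distinct syllables $w_i$ and $w_j$ forces $P\ne Q$. Moreover $k$ cannot be the center of $w_j$ (centers are unique by Lemma~\ref{std}(3)), so $k\ne a$ and hence $a<k<b$. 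Since $a$ and $b$ are consecutive vertices of $Q$, this gives $k\notin\Vert(Q)$ together with $\min Q<k<\max Q$, i.e. $k$ is nested in $Q$.

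At this point the hypotheses of Lemma~\ref{lem:centre1} are met, with $k\in\Vert(P)$ and $k$ nested in $Q$: if $k\in L_c$ then $Q<P$, while if $k\in R_c$ then $Q>P$. It then remains to convert this order on polygons into the desired inequality between $i$ and $j$. Because $m_x^c$ is built by concatenating the standard forms of the cycles in the total order on $\Pol(x)$, every syllable coming from $Q$ precedes every syllable coming from $P$ precisely when $Q<P$; as $w_i$ comes from $P$ and $w_j$ from $Q$, this yields $j<i$ in case (1) and $j>i$ in case (2). The only genuinely delicate points are verifying that the two occurrences of $s_k$ really lie in different polygons (which is exactly where Remark~\ref{rmq:lettre1fois} and the uniqueness of centers enter) and checking that the strict inequality $a<k<b$ places $k$ strictly inside $Q$ rather than on one of its vertices; once these are secured, Lemma~\ref{lem:centre1} does the rest.
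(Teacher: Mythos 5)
Your proof is correct and follows essentially the same route as the paper's: identify that the second occurrence of $s_k$ must lie in a syllable of a different polygon $Q$ (via Remark~\ref{rmq:lettre1fois} and the uniqueness of centers from Lemma~\ref{std}), deduce that $k$ is nested in $Q$, apply Lemma~\ref{lem:centre1}, and translate the order on polygons into the order on syllable positions. The only cosmetic difference is that you invoke part (2) of Lemma~\ref{lem:centre1} directly for the case $k\in R_c$, whereas the paper deduces that case from the first one via the inversion symmetry of Remark~\ref{rmq:inversec}; both are valid, and your spelled-out verification that $a<k<b$ between consecutive vertices of $Q$ forces $k$ to be nested is exactly the step the paper leaves implicit.
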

\begin{proof}
If $s_k$ is the center of a syllable of $x$, then $k$ is a (non-terminal) index of a polygon $P\in\Pol(x)$ (Lemma~\ref{std}). It follows from Remark~\ref{rmq:lettre1fois} that $w_j$ comes from a polygon $Q\neq P$ of $x$. Moreover, $s_k$ cannot be the center of $w_j$ by Lemma~\ref{std}. Hence $k$ is nested in $Q$. By Lemma~\ref{lem:centre1}, we conclude that $Q<P$, hence that $j<i$ since $w_j$ is a syllable of $Q$. The proof of the second statement follows by the first statement and Remark~\ref{rmq:inversec}. 
\end{proof}
Lemma~\ref{lem:centre2} gives the key property induced by the technical total order we put on $\Pol(x)$ which one should keep in mind: if $s_k$ is a center of a syllable of $m_x^c$ and $k\in L_c$ (respectively $k\in R_c$), then the last occurrence (resp. the first occurrence) of $s_k$ in $m_x^c$ is at the center of that syllable. Informations on letters appearing at the top can also be derived:

\begin{lemma}\label{lem:top}
Let $x\in\NC(\Sn,c)$. Let $s_k$ occur at the top of a syllable $w_i$ of $m_x^c$. Then $w_i$ is either the first or the last syllable of $m_x^c$ containing $s_k$. More precisely we have
\begin{enumerate}
\item If $s_k$ is a center of a syllable and $k\in R_c$ (resp. $k\in L_c$), then 
\begin{enumerate}
\item If $w_i\neq s_k$, then $w_i$ is the last (resp. first) syllable containing $s_k$,
\item If $w_i=s_k$, then $w_i$ is the first (resp. last) syllable containing $s_k$.
\end{enumerate}
\item If $s_k$ is not a center of $m_x^c$, then 
\begin{enumerate}
\item If $k+1\in R_c$, then $w_i$ is the first syllable containing $s_k$,
\item If $k+1\in L_c$, then $w_i$ is the last syllable containing $s_k$. 
\end{enumerate}
\end{enumerate}

\end{lemma}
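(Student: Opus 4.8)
The plan is to reduce the whole statement to one geometric fact about the non-initial vertex $k+1$ carrying the top occurrence, and then to run everything through Lemma~\ref{lem:centre1}. First I would fix notation: by Lemma~\ref{std}(2), saying that $s_k$ occurs at the top of $w_i$ means exactly that $k+1$ is a non-initial vertex of the polygon, say $P$, from which $w_i$ comes; I write $(d,k+1)$ for the edge of $P$ producing $w_i$, where $d=\max\{v\in\Vert(P)\mid v<k+1\}$. By Remark~\ref{rmq:lettre1fois} the letter $s_k$ occurs in at most one syllable of each polygon, and it occurs in (a syllable of) a polygon $Q$ precisely when $\min Q\le k<\max Q$. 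Hence, with respect to the total order $<$ on $\Pol(x)$ of Subsection~\ref{sub:std}, ``the first (resp. last) syllable containing $s_k$'' is the one coming from the smallest (resp. largest) polygon $Q$ with $\min Q\le k<\max Q$, and $P$ itself is one of these polygons. So the whole statement is about locating $P$ among them.

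The main step, which already yields the general claim and case (2), is the following top analysis. Let $Q\ne P$ satisfy $\min Q\le k<\max Q$. Then $k+1\notin\Vert(Q)$, and $Q\ne P$ forces $\max Q\ne k+1$ (otherwise $k+1$ would be the terminal vertex of $Q$, so $Q$ would also carry $s_k$ at the top, contradicting Lemma~\ref{std}(3)); thus $\max Q\ge k+2$ and $\min Q\le k<k+1<\max Q$, i.e. $k+1$ is nested in $Q$. Applying Lemma~\ref{lem:centre1} to $k+1\in\Vert(P)$ nested in $Q$ gives $Q<P$ if $k+1\in L_c$ and $Q>P$ if $k+1\in R_c$. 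Therefore $P$ is the largest (resp. smallest) polygon containing $s_k$ when $k+1\in L_c$ (resp. $k+1\in R_c$); this proves both that $w_i$ is the first or the last syllable containing $s_k$, and statement (2).

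For case (1), $s_k$ is the center of a unique syllable (Lemma~\ref{std}(3)) sitting in a polygon $P_{\mathrm{ctr}}$. If $w_i=s_k$, then $P=P_{\mathrm{ctr}}$ and I would simply invoke the consequence of Lemma~\ref{lem:centre2} recorded just after its proof: the center syllable is the last (resp. first) occurrence of $s_k$ when $k\in L_c$ (resp. $k\in R_c$), which is exactly (1)(b). If $w_i\ne s_k$, then $d\le k-1$, so $k$ is nested in $P$; moreover the successor $k'$ of $k$ in $P_{\mathrm{ctr}}$ satisfies $k'\ge k+2$ (else $(k,k+1)$ would be an edge of $P_{\mathrm{ctr}}$ carrying $s_k$ at the top, again against Lemma~\ref{std}(3)), so $k+1$ is nested in $P_{\mathrm{ctr}}$. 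Two applications of Lemma~\ref{lem:centre1} — to $k\in\Vert(P_{\mathrm{ctr}})$ nested in $P$, and to $k+1\in\Vert(P)$ nested in $P_{\mathrm{ctr}}$ — together with antisymmetry of $<$ force $k$ and $k+1$ onto opposite sides, $k\in L_c\Leftrightarrow k+1\in R_c$. Feeding this into the top analysis gives (1)(a): $k\in L_c$ yields $k+1\in R_c$, hence $P$ first, and $k\in R_c$ yields $k+1\in L_c$, hence $P$ last. Finally I would note that the apparent edge cases $k=1$ and $k=n$ never occur in (1)(a): the only syllable with $s_1$ at the top is $s_1$ itself, and if $s_n$ is a center then its successor is $n+1$, so in both cases $w_i=s_k$.

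The one genuinely delicate point is the bookkeeping that upgrades the purely local information (``$k+1$ is nested in every competing polygon'') to a global extremality statement, and in particular the opposite-sides deduction in (1)(a). This is exactly where the specific total order on $\Pol(x)$ and the antisymmetry argument do the real work, so it is the step where I expect to have to argue most carefully.
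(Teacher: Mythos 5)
Your proof is correct, and its skeleton matches the paper's: case (1)(b) is dispatched by Lemma~\ref{lem:centre2}, the blanket ``first or last'' claim together with case (2) comes from locating the polygon $P$ carrying $w_i$ among all polygons whose standard form contains $s_k$, and case (1)(a) is reduced to showing that $k$ and $k+1$ lie on opposite sides of the circle. The difference is in how the two key steps are executed. For the extremality step, the paper argues directly with the horizontal line $\mathcal{L}_{k+1}$: every competing polygon meets that line and is therefore below (resp.\ above) $P$ in the order $<$. That is exactly the same geometric content as your observation that $k+1$ is nested in every competing polygon, followed by Lemma~\ref{lem:centre1}; your version simply channels all the geometry through the already-stated lemma instead of redoing it. The genuine divergence is in (1)(a): the paper first reduces to $k\in R_c$ via the inversion symmetry of Remark~\ref{rmq:inversec} and then rules out $k+1\in R_c$ by a crossing-segments argument (essentially the statement it records just afterwards as Lemma~\ref{lem:topcenter}(1), proved with Figure~\ref{fig:prooftop}), whereas you obtain the opposite-sides statement order-theoretically, from the double nesting ($k$ nested in $P$, $k+1$ nested in $P_{\mathrm{ctr}}$), two applications of Lemma~\ref{lem:centre1}, and antisymmetry of $<$. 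Your route avoids both the inversion trick and any fresh geometric reasoning, and it makes the boundary cases $k=1$, $k=n$ explicit, which the paper leaves implicit; what it gives up is the geometric picture that the paper reuses immediately in Lemma~\ref{lem:topcenter}. Two small remarks: in your main step, $k+1\notin\Vert(Q)$, and hence $\max Q\neq k+1$, already follows from disjointness of the blocks of the partition, so the parenthetical appeal to Lemma~\ref{std}(3) there is redundant; and your uses of ``nested'' need $k,k+1\in\{2,\dots,n\}$ in the paper's definition, which is exactly what your edge-case paragraph supplies, so nothing is missing.
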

\begin{proof}
Using Remark~\ref{rmq:inversec} we can assume that $k\in R_c$. First suppose that $s_k$ is a center. If $w_i=s_k$, then $s_k$ is the center of $w_i$ and therefore $w_i$ is the first syllable containing $s_k$ by Lemma~\ref{lem:centre2}. Hence assume that $w_i\neq s_k$. Note that $w_i$ is a syllable of a polygon $P\in\Pol(x)$ having $k+1$ as non-initial vertex (Lemma~\ref{std}). If $k+1\in R_c$, the only way to have $k$ as a non-terminal vertex and $k+1$ as a non-initial one with both of them in $R_c$ is in case both are vertices of the same polygon, implying $w_i=s_k$, a contradiction. Hence we can assume that $k+1\in L_c$. 

We now prove that if $k+1\in L_c$, then $w_i$ is the last syllable containing $s_k$ (without assuming $s_k$ to be a center, which therefore simultaneously proves $1(a)$ and $2(b)$). Consider the horizontal line $\mathcal{L}_{k+1}$ containing the point $k+1$ (see Remark~\ref{rmq:successifs}). Then any syllable $w_j$ containing the letter $s_k$ must be part of a polygon which meets $\mathcal{L}_{k+1}$. Any such polygon is below the polygon to which $w_i$ belongs in the order $<$, hence $w_i$ must be the last syllable containing $s_k$. 

It remains to treat the case where $s_k$ is not a center and $k+1\in R_c$. Again, the polygon $P$ containing $w_i$ has $k+1$ as vertex and any other polygon having a syllable with the letter $s_k$ must meet the line $\mathcal{L}_{k+1}$. But among all the polygons meeting $\mathcal{L}_{k+1}$, $P$ is the lowest one with respect to $<$. Hence $w_i$ is the first syllable containing $s_k$. 
\end{proof}
\begin{lemma}\label{lem:topcenter}
Let $x\in\NC(\Sn,c)$.  
\begin{enumerate}
\item Assume that $k, k+1\in R_{c}$ (or $k, k+1\in L_{c}$). If $s_k$ is the center of $w_i$ and at the top of $w_j$, then $i=j$ and therefore $w_i=s_k$.
\item Assume that $k\in L_{c}$, $k+1\in R_{c}$ (or $k\in R_{c}$, $k+1\in L_{c}$), $w_i=s_k$. Then $s_k$ does not occur in $w_j$ for $j\neq i$. Moreover, there is at most one occurrence of $s_{k+1}$ in $m_x^c$ (which must be as a center of a syllable of $P\in\Pol(x)$, where $P$ is the polygon containing $w_i$).
\end{enumerate}
\end{lemma}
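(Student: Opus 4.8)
The plan is to convert each hypothesis about \emph{centers} and \emph{tops} of syllables into combinatorial data about edges of polygons by means of Lemma~\ref{std}, and then to reach a contradiction using Lemma~\ref{lem:centre1}: the point will be that the two integers $k$ and $k+1$ end up nested in \emph{distinct} polygons in a way that forces incompatible relations in the total order $<$ on $\Pol(x)$. Throughout I would use the dictionary that a syllable arising from a transposition $(a,b)$ with $a<b$ (an edge joining consecutive vertices $a,b$ of a polygon) has center $s_a$ and top $s_{b-1}$, and contains the letter $s_l$ precisely for $a\le l\le b-1$, as a non-central letter exactly when $a<l<b$.

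For part (1), suppose $s_k$ is the center of $w_i$ and the top of $w_j$. The dictionary gives an edge $(k,m)$ of a polygon $P$ with $m>k$ (producing $w_i$) and an edge $(m',k+1)$ of a polygon $Q$ with $m'<k+1$ (producing $w_j$). Arguing by contradiction, I would assume $w_i\neq s_k$, i.e. $m>k+1$; then $k+1\notin\Vert(P)$, so $Q\neq P$, and since the polygons of $x$ are disjoint this forces $m'<k$ with $k\notin\Vert(Q)$. The crux is then immediate: $k$ is nested in $Q$ (as $m'<k<k+1$ with $m',k+1\in\Vert(Q)$) while $k+1$ is nested in $P$ (as $k<k+1<m$ with $k,m\in\Vert(P)$). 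If $k,k+1\in R_c$, Lemma~\ref{lem:centre1}(2) gives both $Q>P$ (from $k$) and $P>Q$ (from $k+1$); if $k,k+1\in L_c$, Lemma~\ref{lem:centre1}(1) gives both $Q<P$ and $P<Q$. Either way this is absurd, so $w_i=s_k$; since a letter is at the top of at most one syllable (Lemma~\ref{std}), the syllable $w_j$ in which $s_k$ appears at the top must be $w_i$, whence $i=j$.

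For part (2) I would first use Remark~\ref{rmq:inversec} to reduce the case $k\in R_c$, $k+1\in L_c$ to the case $k\in L_c$, $k+1\in R_c$ (passing to $x^{-1}\in\NC(\Sn,c^{-1})$ swaps $L_c,R_c$ and reverses a standard form while fixing each palindromic syllable, so $w_i=s_k$ is preserved and all conclusions transfer back). Assuming then $k\in L_c$, $k+1\in R_c$ and $w_i=s_k$, there is an edge $(k,k+1)$ of a polygon $P$. If $s_k$ occurred in some $w_j$ with $j\neq i$, then by Lemma~\ref{std} and Remark~\ref{rmq:lettre1fois} it would do so non-centrally in a syllable from an edge $(a,b)$ of a polygon $Q\neq P$ with $a<k<b$; then $k$ nested in $Q$ gives $Q<P$, while disjointness forces $b\neq k+1$ hence $k+1<b$, so $k+1$ nested in $Q$ gives $Q>P$ --- a contradiction, proving $s_k$ occurs only in $w_i$. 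Finally, any syllable containing $s_{k+1}$ comes from an edge $(a,b)$ with $a\le k+1\le b-1$; in the non-central case $a<k+1<b$ one checks (ruling out $Q=P$ and $a=k$ by disjointness) that both $k$ and $k+1$ are nested in a polygon $Q\neq P$, which is excluded exactly as above. Hence every occurrence of $s_{k+1}$ is as a center, forcing $k+1\in\Vert(Q)$ and thus $Q=P$; as a letter is the center of at most one syllable (Lemma~\ref{std}), $s_{k+1}$ occurs at most once, as the center of a syllable of $P$.

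The conceptual content is entirely in the single observation that, in each subcase, $k$ and $k+1$ become nested in opposite polygons and that Lemma~\ref{lem:centre1} then contradicts itself. I expect the main obstacle to be purely the bookkeeping of the degenerate cases --- ruling out $P=Q$, $m'=k$, $a=k$, and $b=k+1$ --- each of which is resolved by the disjointness of the polygons of a noncrossing partition; these are the only places where one must be careful not to conflate a vertex of one polygon with a vertex of another.
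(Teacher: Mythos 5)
Your proof is correct, but it runs on a different engine than the paper's. The paper proves both parts by direct geometric contradiction with the noncrossing property: in part (1) it reduces to $k,k+1\in R_c$ via Remark~\ref{rmq:inversec} and shows that the two segments $(k,m)$ and $(m',k+1)$ would literally cross (Figure~\ref{fig:prooftop}); in part (2) it observes that the segment joining $k$ to $k+1$ separates the plane into a half-plane of points $<k$ and one of points $>k+1$, so any other syllable containing $s_k$ would come from an edge crossing that segment. You instead convert every hypothesis into nesting statements and invoke Lemma~\ref{lem:centre1} twice, getting $Q<P$ and $Q>P$ simultaneously, i.e.\ a contradiction with antisymmetry of the total order on $\Pol(x)$ rather than with noncrossingness itself. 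Both are valid; your route is more uniform (the same two-line mechanism disposes of both parts and both sign configurations, so you need the inversion trick only in part (2), and not at all in part (1)), and it cleanly reuses a lemma the paper states but does not exploit here, whereas the paper's argument is more self-contained and visibly geometric. One small inaccuracy in your write-up: in part (2), ruling out $Q=P$ for a non-central occurrence of $s_{k+1}$ is not a consequence of disjointness of polygons but of the fact that $k+1\in\Vert(P)$ cannot lie strictly between the two \emph{consecutive} vertices $a,b$ of an edge of $P$ (equivalently, a vertex of a polygon is never nested in its own polygon); disjointness then only serves to exclude $a=k$ and to conclude $k,k+1\notin\Vert(Q)$. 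This is a triviality to repair and does not affect the argument.
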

\begin{figure}[h!]
\begin{center}
\psscalebox{0.9}{
\begin{pspicture}(4,-1.92)(2.5,1.92)
\pscircle[linecolor=gray, linewidth=0.2pt](0,0){1.8}

\psline[linecolor=gray, linewidth=1.5pt]{->}(0,1.8)(0,-1.8)

\psdots(0,1.8)(-0.897,1.56)(1.2237,1.32)(-1.44,1.08)(-1.59197,0.84)(1.697,0.6)(1.7636,0.36)(-1.796,0.12)(1.796,-0.12)(1.7636,-0.36)(-1.697,-0.6)(1.59197,-0.84)(-1.44,-1.08)(1.2237,-1.32)(-0.897,-1.56)(0,-1.8)
\pscurve[linestyle=dashed](1.796,-0.12)(0.5, 0.5)(-0.4, 0.3)(-1.697,-0.6)
\pscurve[linestyle=dashed](1.7636,-0.36)(1.3,0)(1.2237,1.32)
\psline[linestyle=dotted, linewidth=0.4pt](-2,1.8)(2,1.8)
\psline[linestyle=dotted, linewidth=0.4pt](-2,1.56)(2,1.56)
\psline[linestyle=dotted, linewidth=0.4pt](-2,1.32)(2,1.32)
\psline[linestyle=dotted, linewidth=0.4pt](-2,1.08)(2,1.08)
\psline[linestyle=dotted, linewidth=0.4pt](-2,0.84)(2,0.84)
\psline[linestyle=dotted, linewidth=0.4pt](-2,0.6)(2,0.6)
\psline[linestyle=dotted, linewidth=0.4pt](-2,0.36)(2,0.36)
\psline[linestyle=dotted, linewidth=0.4pt](-2,0.12)(2,0.12)
\psline[linestyle=dotted, linewidth=0.4pt](-2,-1.8)(2,-1.8)
\psline[linestyle=dotted, linewidth=0.4pt](-2,-1.56)(2,-1.56)
\psline[linestyle=dotted, linewidth=0.4pt](-2,-1.32)(2,-1.32)
\psline[linestyle=dotted, linewidth=0.4pt](-2,-1.08)(2,-1.08)
\psline[linestyle=dotted, linewidth=0.4pt](-2,-0.84)(2,-0.84)
\psline[linestyle=dotted, linewidth=0.4pt](-2,-0.6)(2,-0.6)
\psline[linestyle=dotted, linewidth=0.4pt](-2,-0.36)(2,-0.36)
\psline[linestyle=dotted, linewidth=0.4pt](-2,-0.12)(2,-0.12)

\rput(2.4, -0.6){\tiny $m$}
\rput(2.4, 1.32){\tiny $m'$}
\rput(2.4, -0.12){\tiny $k$}
\rput(2.4, -0.36){\tiny $k+1$}
\rput(2.4, 1.8){\tiny $1$}
\rput(2.4, -1.8){\tiny $n+1$}

\pscircle[linecolor=gray, linewidth=0.2pt](6,0){1.8}

\psline[linecolor=gray, linewidth=1.5pt]{->}(6,1.8)(6,-1.8)
\psline[linestyle=dashed](7.7636,0.36)(4.204,0.12)
\psdots(6,1.8)(5.103,1.56)(7.2237,1.32)(4.56,1.08)(4.40803,0.84)(7.697,0.6)(7.7636,0.36)(4.204,0.12)(7.796,-0.12)(7.7636,-0.36)(4.303,-0.6)(7.59197,-0.84)(4.56,-1.08)(7.2237,-1.32)(5.103,-1.56)(6,-1.8)
\psline[linestyle=dotted, linewidth=0.4pt](4,1.8)(8,1.8)
\psline[linestyle=dotted, linewidth=0.4pt](4,1.56)(8,1.56)
\psline[linestyle=dotted, linewidth=0.4pt](4,1.32)(8,1.32)
\psline[linestyle=dotted, linewidth=0.4pt](4,1.08)(8,1.08)
\psline[linestyle=dotted, linewidth=0.4pt](4,0.84)(8,0.84)
\psline[linestyle=dotted, linewidth=0.4pt](4,0.6)(8,0.6)
\psline[linestyle=dotted, linewidth=0.4pt](4,0.36)(8,0.36)
\psline[linestyle=dotted, linewidth=0.4pt](4,0.12)(8,0.12)
\psline[linestyle=dotted, linewidth=0.4pt](4,-1.8)(8,-1.8)
\psline[linestyle=dotted, linewidth=0.4pt](4,-1.56)(8,-1.56)
\psline[linestyle=dotted, linewidth=0.4pt](4,-1.32)(8,-1.32)
\psline[linestyle=dotted, linewidth=0.4pt](4,-1.08)(8,-1.08)
\psline[linestyle=dotted, linewidth=0.4pt](4,-0.84)(8,-0.84)
\psline[linestyle=dotted, linewidth=0.4pt](4,-0.6)(8,-0.6)
\psline[linestyle=dotted, linewidth=0.4pt](4,-0.36)(8,-0.36)
\psline[linestyle=dotted, linewidth=0.4pt](4,-0.12)(8,-0.12)

\rput(8.4, 1.8){\tiny $1$}
\rput(8.4,0.36){\tiny $k$}
\rput(8.4,0.12){\tiny $k+1$}
\rput(8.4, -1.8){\tiny $n+1$}
\end{pspicture}}
\caption{Picture for the proof of Lemma \ref{lem:topcenter}.}
\label{fig:prooftop}
\end{center}
\end{figure}
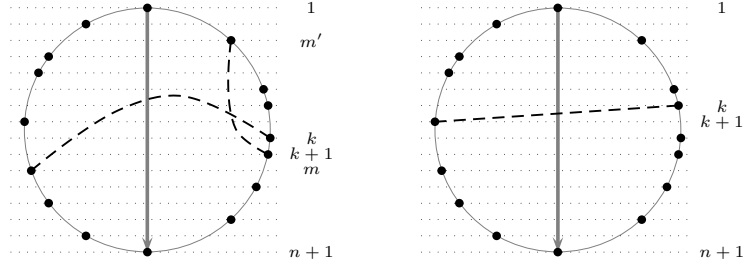
\begin{proof}
$1.$ Thanks to Remark \ref{rmq:inversec}, we can assume that $k, k+1\in R_{c}$. If a syllable $w_i$ (resp. $w_j$) has $s_k$ at its center (resp. at its top), then $w_i$ (resp. $w_j$) is in a polygon $P$ (resp. $P'$) which has $k$ as non terminal (resp. $k+1$ as non initial) index; in particular, if $P\neq P'$, then $P$ (resp. $P'$) must have an edge or a diagonal joining the point $k$ (resp. $k+1$) to a point $m>k$ and $m\neq k+1$ whence $m>k+1$ (resp. $m'<k+1$ and $m'\neq k$ whence $m'<k$). If follows that the two segments $(k,m)$ and $(m',k+1)$ are crossing (see Figure \ref{fig:prooftop}), which is a contradiction. Hence $P=P'$ and $w_i=w_j=s_k$. 


$2.$ Under these assumptions, there is a polygon $P\in\Pol(x)$ having both $k$ and $k+1$ as vertices. Another polygon $Q$ containing $s_k$ would then cross the segment joining the point $k$ to the point $k+1$ since this segment cuts the plane in two half planes, one containing the points smaller than $k$ and the other containing the points bigger than $k+1$ (see Figure~\ref{fig:prooftop}). Hence $P$ is the unique polygon containing $s_k$ and its standard form has a unique occurrence of $s_k$ since it is a center. The letter $s_{k+1}$ can then only appear as a center since otherwise we would have another occurrence of $s_k$ in $m_x^c$. 
\end{proof}

\subsection{Extracting a fully commutative subword from the standard form}

\begin{definition}\label{extracting}
We define an element $w_x^c$ of $\Sn$ by extracting a subword of $m_x^c$ representing it as follows
\begin{enumerate}
\item Each syllable $w_i$ of $m_x^c$ contributes to $w_x^c$ each simple transposition occurring in it exactly once. In particular each center of a syllable must contribute since it occurs only once in its syllable.
\item If $s_i$ is a center of a syllable and $i\in L_{c}$, then the $s_i$ are contributed from the left part of the other syllables in which they occur,
\item If $s_i$ is a center of a syllable and $i\in R_{c}$, then the $s_i$ are contributed from the right part of the other syllables in which they occur,
\item If $s_i$ is not a center and $i\in L_{c}$, then the $s_i$ are contributed from the right part of the syllables in which they occur,
\item If $s_i$ is not a center and $i\in R_{c}$, then the $s_i$ are contributed from the left part of the syllables in which they occur.
\end{enumerate}
\end{definition}
Recall that $m_x^c$ is not unique in general since for each polygon, one can have adjacent commuting syllables for which one can choose the order in which they occur; but two such syllables must correspond to reflections $(i,j)$ and $(k,\ell)$ with $i<j<k<\ell$ (since their $\S$-supports are disjoint) and hence each letter occurring in one syllable must commute to any letter in the other syllable. Hence the element of $\Sn$ represented by the extracted word is not affected by the choice of the standard form, hence $w_x^c$ is well-defined (as element of $\Sn$). We will abuse notation and write $w_x^c$ for both the element of $\Sn$ and the word for it extracted from a fixed standard form as above (in Theorem~\ref{thm:bijsame} below we will show that this word is in fact an $\S$-reduced expression of a fully commutative element).

\begin{rmq}\label{rmq:inversec2}
It follows from the above rules and Remark~\ref{rmq:inversec} that for all $x\in\NC(\Sn,c)$, we have $(w_{x}^c)^{-1}=w_{x^{-1}}^{c^{-1}}$. 
\end{rmq} 

\begin{exple}\label{lesdeuxbij}
We give an example of the above described process and make the link with the bijection $\varphi$. Let $x$ be as in Figure~\ref{figure:ordre}. Denote by $c_i$ the cycle corresponding to the polygon $P_i$, $i=1,2,3,4$. The standard forms are given by $$m_{c_1}^c=(s_6 s_5 s_4 s_5 s_6) s_3,~m_{c_2}^c=(s_7 s_6 s_5 s_6 s_7) s_8,m_{c_3}^c=(s_{12}s_{11}s_{12})s_{10},~m_{c_4}^c=s_{13}s_{12}s_{13},$$

hence concatenating the words in the order given in Figure~\ref{figure:ordre} we get 
$$m_x^c=(s_{13}s_{12}s_{13})(s_{12}s_{11}s_{12})(s_{10})(s_6 s_5 s_4 s_5 s_6)(s_3)(s_7 s_6 s_5 s_6 s_7) (s_8).$$
The letters contributed to $w_x^c$ according to the rules from Definition~\ref{extracting} are written in red
$$({s_{13}}{\color{red}s_{12}s_{13}})(s_{12}{\color{red}s_{11}s_{12}})({\color{red}s_{10}})({\color{red}s_6 s_5 s_4} s_5 s_6)({\color{red}s_3})({\color{red}s_7 s_6 s_5} s_6 s_7) ({\color{red}s_8})$$

The suwbord in red is an $\S$-reduced expression of a fully commutative element, hence $w_x^c\in\F(\Sn)$ with normal form $$w_x^c= (s_6 s_5 s_4 s_3) (s_7 s_6 s_5 ) (s_8) (s_{12} s_{11} s_{10}) ( s_{13} s_{12}).$$

We have \begin{eqnarray}\label{eqn:wx}
I_{w_x^c}=\{6,7,8,12,13\},~J_{w_x^c}=\{3,5,8,10,12\}.
\end{eqnarray}
On the other hand, we have $\inv(x)=(3,7)(5,8,9)(10,13)(12,14)$ and hence $$U_{\inv(x)}^c=\{7,8,9,13,14\},~D_{\inv(x)}^c=\{3,5,8,10,12\}.$$
It follows that $I_{\varphi(x)}=\{6,7,8,12,13\},~J_{\varphi(x)}=\{3,5,8,10,12\}$; comparing with \eqref{eqn:wx} we get that $w_x^c=\varphi(x)$. 
\end{exple}

In Example~\ref{lesdeuxbij} we have that $w_x^c$ lies in $\F(\Sn)$ and that $\varphi(x)=w_x^c$. It is a general fact:

\begin{theorem}\label{thm:bijsame}
Let $x\in \NC(\Sn,c)$. Then $w_x^c$ is a reduced expression of an element of $\F(\Sn)$ and $\varphi(x)=w_x^c$.
\end{theorem}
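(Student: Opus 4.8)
The plan is to split the statement into two parts: first, that $w_x^c$ is a reduced expression of a fully commutative element, and second, that this element equals $\varphi(x)$. For the second part I would exploit the fact, immediate from Definition~\ref{defn:fully}, that a fully commutative element $w$ is completely determined by the pair of sets $(J_w, I_w)$: the normal form simply pairs the $m$-th smallest element of $J_w$ with the $m$-th smallest element of $I_w$. Since, by the proof of Theorem~\ref{thm:bijecgenerales}, $\varphi(x)$ is the fully commutative element with $J_{\varphi(x)}=D_{\inv(x)}^c$ and $I_{\varphi(x)}[1]=U_{\inv(x)}^c$ --- and these sets are described purely in terms of the initial, terminal and nested indices of $x$ by the two displayed equivalences of that theorem --- it then suffices to establish that $w_x^c$ is fully commutative and that $I_{w_x^c}$ and $J_{w_x^c}$ satisfy exactly those same equivalences.

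For the first part I would apply Zinno's criterion, Lemma~\ref{lem:fullocc}, which yields reducedness and full commutativity simultaneously. The starting observation, drawn from Remark~\ref{rmq:lettre1fois} and Lemma~\ref{std}, is that for a fixed $k$ the occurrences of $s_k$ in $w_x^c$ are in bijection with the polygons $P$ of $x$ satisfying $\min P\le k<\max P$, each such polygon contributing $s_k$ exactly once (the extraction rule of Definition~\ref{extracting} keeps one copy per syllable, and distinct syllables of one polygon have disjoint support); moreover these occurrences appear in $m_x^c$ in the total order $<$ on $\Pol(x)$. It then remains to verify that between two consecutive occurrences of $s_k$ there is exactly one $s_{k-1}$ and one $s_{k+1}$. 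I would do this by determining, for two $<$-consecutive spanning polygons $P<P'$, precisely which polygons contribute $s_{k\pm1}$ in between: here Lemma~\ref{lem:centre2} controls the position of centres (for $k\in R_c$ a centre is the first occurrence of $s_k$, for $k\in L_c$ the last), Lemma~\ref{lem:top} controls the extreme syllables containing a letter at its top, and Lemma~\ref{lem:topcenter} disposes of the degenerate situations in which $s_k$ is both a centre and a top. Throughout, the symmetry $(w_x^c)^{-1}=w_{x^{-1}}^{c^{-1}}$ of Remark~\ref{rmq:inversec2}, which exchanges $L_c$ and $R_c$, halves the casework.

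For the second part, with full commutativity in hand, I would compute $J_{w_x^c}$ and $I_{w_x^c}$ directly from the word via Lemma~\ref{lem:ij}: namely $k\in J_{w_x^c}$ iff $s_k$ occurs and no $s_{k-1}$ occurs after its last occurrence, and $k\in I_{w_x^c}$ iff $s_k$ occurs and no $s_{k+1}$ occurs before its first occurrence. Feeding in the location data for the extreme occurrences of $s_k$, $s_{k-1}$ and $s_{k+1}$ assembled in the first part --- and carefully tracking the index shift that relates $I_{\varphi(x)}$ to the terminal and upper indices --- I would check, case by case according to whether $k\in L_c$ or $k\in R_c$ and whether $s_k$ is a centre, that these two conditions reproduce exactly the three alternatives listed for $J_{\varphi(x)}$ and $I_{\varphi(x)}$ in Theorem~\ref{thm:bijecgenerales}. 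Matching both sets then forces $w_x^c=\varphi(x)$ by the normal-form uniqueness recalled above.

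I expect the main obstacle to be the interleaving bookkeeping in the first part: one must track the three letters $s_{k-1},s_k,s_{k+1}$ at once across syllables coming from different polygons, whose relative positions in $m_x^c$ are dictated by the subtle total order $<$ on $\Pol(x)$. The genuinely delicate points are the boundary behaviour near the labels $1$ and $n+1$ (which lie in $L_c\cap R_c$) and the coincidences where a single letter plays several roles at once; these are precisely the cases isolated in Lemma~\ref{lem:topcenter}, so the argument really rests on feeding its conclusions into Zinno's criterion correctly. A numerical consistency check against Example~\ref{lesdeuxbij} would guard against left/right or index-shift errors in applying the extraction rules of Definition~\ref{extracting}.
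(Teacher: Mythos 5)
Your proposal is correct and follows essentially the same route as the paper: full commutativity is established via Zinno's criterion (Lemma~\ref{lem:fullocc}) by checking the interleaving of $s_{k\pm1}$ between consecutive occurrences of $s_k$ using Lemmas~\ref{lem:centre2}, \ref{lem:top} and \ref{lem:topcenter} together with the inversion symmetry (this is the paper's Lemma~\ref{lem:occ} and Corollary~\ref{cor:thmpart1}), and the identification $w_x^c=\varphi(x)$ is obtained by computing $(J_{w_x^c}, I_{w_x^c})$ via Lemma~\ref{lem:ij} and matching it against the characterization in Theorem~\ref{thm:bijecgenerales}, exactly as in the paper's proof. The only cosmetic difference is your explicit bijection between occurrences of $s_k$ and spanning polygons, which the paper leaves implicit.
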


The proof is somewhat intricate and will occupy the remainder of this section. The strategy is to first prove that $w_x^c$ is a reduced expression of a fully commutative element by checking the criterion of Lemma~\ref{lem:fullocc}. The statement that we recover the bijection $\varphi$ will be proven later. 

\begin{lemma}\label{lem:occ}
Let $x\in\NC(\Sn,c)$. For all $i=2,\dots, n$, between any two successive occurrences of $s_i$ in $w_x^c$ there is exactly one occurrence of $s_{i-1}$. Similarly, for all $i=1,\dots, n-1$, between any two successive occurrences of $s_i$ in $w_x^c$ there is exactly one occurrence of $s_{i+1}$. 
\end{lemma}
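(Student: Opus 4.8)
The statement is precisely the hypothesis of Lemma~\ref{lem:fullocc}, so establishing it is exactly what will let us conclude that $w_x^c$ is a reduced expression of a fully commutative element. My plan is to verify the two interleaving conditions directly, reading everything off from the extraction rules of Definition~\ref{extracting}. First I would record the basic bookkeeping of occurrences. By Remark~\ref{rmq:lettre1fois} a given syllable contains $s_i$ at most once, and the syllable coming from a polygon $P\in\Pol(x)$ contains $s_i$ exactly when $\min P\le i<\max P$. Writing $N_i$ for the number of occurrences of $s_i$ in $w_x^c$, this means $s_i$ occurs once for each polygon whose vertical extent crosses the horizontal gap between the points $i$ and $i+1$ (cf. Remark~\ref{rmq:successifs}), and these occurrences appear in $w_x^c$ in the total order $<$ on $\Pol(x)$. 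Moreover such a polygon $P$ contains $s_{i-1}$ if and only if $\min P\neq i$ (i.e. $i$ is not the initial index of $P$), and contains $s_{i+1}$ if and only if $\max P\neq i+1$ (i.e. $i+1$ is not the terminal index of $P$). Comparing the two counts gives $N_{i-1}-N_i=\#\{P:\max P=i\}-\#\{P:\min P=i\}\in\{-1,0,1\}$, since each integer is a vertex of at most one polygon; this is the numerical shadow of the alternation we are after.

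For the first assertion I would analyze the merged sequence of occurrences of $s_{i-1}$ and $s_i$ and show that each internal gap between two successive $s_i$ contains exactly one $s_{i-1}$. The local picture inside one polygon $P$ spanning $i$ is the starting point: if $i\notin\Vert(P)$ then $s_{i-1}$ and $s_i$ lie in one and the same syllable of $P$, as neighbouring letters of it; whereas if $i\in\Vert(P)$ then $i$ is a non-terminal vertex, so $s_i$ is the center of one syllable of $P$, while (when $i$ is non-initial) $s_{i-1}$ is the top of the adjacent syllable, and the order of these two syllables is forced because $s_{i-1}$ and $s_i$ do not commute. In either case Definition~\ref{extracting} selects one copy of $s_{i-1}$ and one of $s_i$ and fixes their relative position inside the block of $P$.

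To turn this local information into a global statement I would invoke Lemmas~\ref{lem:centre2}, \ref{lem:top} and \ref{lem:topcenter}. When $s_i$ (resp. $s_{i-1}$) is a center, Lemma~\ref{lem:centre2} places its unique central occurrence last (if the index lies in $L_c$) or first (if it lies in $R_c$) among all polygons spanning it, and Lemmas~\ref{lem:top} and \ref{lem:topcenter} control the side --- the left part or the right part of a syllable --- from which each non-central copy is contributed. Feeding this through the order $<$ shows that the copies of $s_{i-1}$ and $s_i$ alternate in the required way. To exclude a second $s_{i-1}$ inside a gap, and to exclude an empty gap, I would use a crossing argument of the type in the proof of Lemma~\ref{lem:topcenter}: a configuration producing two $s_{i-1}$'s between consecutive $s_i$'s forces two of the drawn segments to cross near the line $\mathcal{L}_i$ (Remark~\ref{rmq:successifs}), contradicting the noncrossing condition.

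The second assertion, about $s_{i+1}$ between successive $s_i$, I would prove by the mirror argument, replacing the roles of centers and initial indices by tops and terminal indices throughout. I do not expect a free reduction from the first assertion, since the inverse symmetry $(w_x^c)^{-1}=w_{x^{-1}}^{c^{-1}}$ of Remark~\ref{rmq:inversec2} preserves (rather than exchanges) the local role of each $s_i$, and so relates the first assertion only to itself. That symmetry is nonetheless useful \emph{within} each assertion: because passing to $c^{-1}$ swaps $L_c$ and $R_c$ while reversal preserves the betweenness of letters, it roughly halves the $L_c/R_c$ case analysis. The \textbf{main obstacle} is precisely this case bookkeeping: the side of a syllable from which a letter is extracted --- dictated by $L_c/R_c$ membership and by whether the index is a center --- decides whether a copy of $s_{i-1}$ falls inside or outside a given gap, and all the boundary situations (where $i$, $i-1$ or $i+1$ is initial, terminal, or merely nested) must be checked so that exactly one copy lands in each internal gap. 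Keeping this organized by means of the three center/top lemmas, rather than by brute enumeration, is where the real work lies.
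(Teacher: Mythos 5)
Your plan follows the paper's own proof in all essentials: the paper likewise reduces to $i\in R_c$ via Remarks~\ref{rmq:inversec} and \ref{rmq:inversec2}, splits into the cases where $s_i$ is or is not a center of $m_x^c$, uses the extraction rules of Definition~\ref{extracting} together with Lemmas~\ref{lem:centre2}, \ref{lem:top} and \ref{lem:topcenter} to locate the paired occurrences of $s_{i-1}$ and to push any ``extra'' occurrence (necessarily at the top of a syllable) before the first $s_i$, and treats the second assertion as a mirror argument. Your supporting observations (one occurrence of $s_i$ per spanning polygon, ordered by $<$; extra $s_{i-1}$'s arising only from syllables with $s_{i-1}$ at the top; the inverse symmetry halving the $L_c/R_c$ analysis within an assertion rather than exchanging the two assertions) all check out, so this is essentially the paper's proof at a slightly coarser level of detail.
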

\begin{proof}
We only prove the first statement --- the proof of the second one is similar. Thanks to Remarks \ref{rmq:inversec} and \ref{rmq:inversec2}, we can assume that $i\in R_c$. The contributions of $s_i$ to $w_x^c$ must come from distinct syllables of $m_x^c$ (even from distinct polygons). Let $w_1, w_2,\dots, w_m$ be the syllables containing $s_i$ ordered such that $i<j$ if and only if $w_i$ occurs before $w_j$ in $m_c^x$ when reading the word from the left to the right. 

First assume that $s_i$ is the center of a syllable $w$ of $m_x^c$. Then by Lemma~\ref{lem:centre2} we have $w=w_1$. Since $s_i$ cannot be the center of two syllables, the syllables $w_2,\dots, w_m$ contain both $s_i$ and $s_{i-1}$. By rule $(3)$ the syllables $w_2,\dots, w_m$ contribute their $s_i$ from their right part and their contributed $s_{i-1}$ therefore lies on the left of that contributed $s_i$. It follows that there is at least one occurrence of $s_{i-1}$ between any two successive occurrences of $s_i$ and it remains to show that we cannot have two occurrences. If it was the case, the second $s_{i-1}$ would have to be contributed from a syllable $w'$ in which it is the top (otherwise $s_i$ occurs). This would force $w_1$ and $w'$ to be syllables of the same polygon. Since $w_1$ and $w'$ do not commute (as elements of $\Sn$) and their product lies in $\NC(\Sn,c)$ the syllable $w'$ must occur before $w_1$ (since $i\in R_c$), hence this unique additional possible occurrence of $s_{i-1}$ would in that case be before the first $s_i$, contradicting the assumption that it lies between two successive occurrences of $s_i$. 

Now assume that $s_i$ is not a center. It implies that $s_{i-1}$ occurs in every syllable among the $w_1,\dots, w_m$. By rule $(5)$, these syllables contribute their $s_i$ from their left part and hence their contributed $s_{i-1}$ lies on the right of this $s_i$. As before, one can have at most one additional contribution of an $s_{i-1}$ coming from a syllable $w'$ with $s_{i-1}$ at the top. By Lemma~\ref{lem:top}, $w'$ is either the first or the last syllable containing $s_{i-1}$. We claim that in all the cases, $w'$ is the first syllable, which concludes, since it then has to occur before $w_1$. Indeed, if $s_{i-1}$ is not a center then the claim follows from Lemma~\ref{lem:top}~(2a). Hence assume that $s_{i-1}$ is a center. If $i-1\in R_c$, then by Lemma~\ref{lem:topcenter}~(1) the only way to have both $i$ and $i-1$ in $R_c$ and $s_{i-1}$ being the top and the center of two syllables is that these two syllables coincide and are equal to $s_{i-1}$, hence $w'=s_{i-1}$ in that case. The claim then follows from Lemma~\ref{lem:centre2}. If $i-1\in L_c$ then by Lemma~\ref{lem:top}~(1) $w'$ is the first syllable containing $s_{i-1}$ provided $w'\neq s_{i-1}$. But if $w'=s_{i-1}$, since $i-1\in L_c$ and $i\in R_c$ by Lemma~\ref{lem:topcenter}~(2) there is a single occurrence of $s_{i-1}$ in $m_x^c$ hence $w'$ is the unique syllable containing $s_{i-1}$, in particular the first one (note that in that case $s_i$ does not occur in $m_x^c$).  
\end{proof}

\begin{corollary}\label{cor:thmpart1}
Let $x\in\NC(\Sn,c)$. Then $w_x^c$ is a reduced expression of a fully commutative element. 
\end{corollary}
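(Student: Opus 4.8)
The plan is to deduce the corollary directly from Lemma~\ref{lem:occ} by invoking Zinno's characterization of reduced fully commutative words recorded in Lemma~\ref{lem:fullocc}. That criterion asserts that an $\S$-word is a reduced expression of a fully commutative element if and only if, for every $i$ such that $s_i$ occurs at least twice in the word, there is exactly one occurrence of $s_{i-1}$ and exactly one occurrence of $s_{i+1}$ between any two successive occurrences of $s_i$. Since this is a single equivalence, verifying its hypothesis simultaneously yields both reducedness and full commutativity of $w_x^c$, so no separate argument for reducedness is needed.

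First I would observe that Lemma~\ref{lem:occ} supplies precisely this data for the word $w_x^c$: for every $i\in\{2,\dots,n\}$ there is exactly one $s_{i-1}$ between two successive occurrences of $s_i$, and for every $i\in\{1,\dots,n-1\}$ there is exactly one $s_{i+1}$ between two successive occurrences of $s_i$. Combining the two statements, for each index $i$ with $2\leq i\leq n-1$ both neighbour conditions hold at once, which is exactly the requirement imposed by Lemma~\ref{lem:fullocc} at such an $i$. (If $s_i$ occurs at most once there are no two successive occurrences, so the condition is vacuous there.)

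The only point left is to treat the two boundary indices, where one of the neighbours $s_{i-1}$ or $s_{i+1}$ is not a simple generator of $\Sn$. For $i=1$ the generator $s_0$ is absent, so the hypothesis of Lemma~\ref{lem:fullocc} only concerns $s_2$, and the second statement of Lemma~\ref{lem:occ} applied with $i=1$ guarantees exactly one $s_2$ between consecutive occurrences of $s_1$; symmetrically, for $i=n$ the generator $s_{n+1}$ is absent, and the first statement of Lemma~\ref{lem:occ} applied with $i=n$ guarantees exactly one $s_{n-1}$ between consecutive occurrences of $s_n$. Thus the criterion of Lemma~\ref{lem:fullocc} is verified for every generator appearing at least twice, and $w_x^c$ is a reduced expression of a fully commutative element. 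I do not expect any real obstacle here: all the combinatorial content was already carried out in proving Lemma~\ref{lem:occ}, and the corollary is a direct translation of that lemma through Lemma~\ref{lem:fullocc}.
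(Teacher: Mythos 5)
Your reduction to Lemmas~\ref{lem:fullocc} and \ref{lem:occ} is the right idea, and your treatment of the interior indices $2\leq i\leq n-1$ is fine, but your handling of the boundary indices rests on a misreading of Lemma~\ref{lem:fullocc}, and this is exactly where the real content of the corollary's proof lies. The criterion of Lemma~\ref{lem:fullocc} does not ``only concern $s_2$'' when $i=1$: since the letter $s_0$ does not exist, the requirement that there be exactly one occurrence of $s_0$ between two successive occurrences of $s_1$ is unsatisfiable, so the correct reading of the criterion is that $s_1$ (and symmetrically $s_n$) may occur \emph{at most once} in any reduced expression of a fully commutative element. Under your weakened reading, the word $s_1s_2s_1$ would pass the test --- it has exactly one $s_2$ between its two occurrences of $s_1$ --- yet $s_1s_2s_1$ is a reduced expression of the transposition $(1,3)$, which is not fully commutative; so your interpretation makes Lemma~\ref{lem:fullocc} false. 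Consequently Lemma~\ref{lem:occ} alone cannot close the argument: its second statement applied with $i=1$ is perfectly consistent with $s_1$ occurring twice in $w_x^c$ (with one $s_2$ in between), and if that happened the corollary would fail.

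What is missing is precisely the observation the paper makes at this point: $s_1$ and $s_n$ occur at most once in $w_x^c$, so that the criterion of Lemma~\ref{lem:fullocc} is vacuous for $i=1$ and $i=n$. This is not a consequence of Lemma~\ref{lem:occ}; it follows from the structure of standard forms. A syllable $s_{j-1}s_{j-2}\cdots s_i\cdots s_{j-2}s_{j-1}$ (for the transposition $(i,j)$, $i<j$) contains $s_1$ only as its center (which forces $i=1$) and contains $s_n$ only at its top (which forces $j=n+1$); by Lemma~\ref{std}~(3) a letter is the center (or the top) of at most one syllable of $m_x^c$, and by construction each syllable contributes each letter of its support exactly once to $w_x^c$. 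Hence $s_1$ and $s_n$ each appear at most once in $w_x^c$, and with this extra line your argument for the interior indices completes the proof exactly as the paper does.
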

\begin{proof}
Note that $s_1$ and $s_n$ can occur at most once in $w_x^c$. Hence this follows immediately from Lemmas~\ref{lem:fullocc} and \ref{lem:occ}.
\end{proof}



We are now able to prove Theorem~\ref{thm:bijsame}. The strategy is to use the characterization of the sets $I_{\varphi(x)}$ and $J_{\varphi(x)}$ given in Theorem~\ref{thm:bijecgenerales} and Lemma~\ref{lem:ij} to show that they agree with $I_{w_x^c}$ and $J_{w_x^c}$ respectively. 

\begin{proof}[Proof of Theorem~\ref{thm:bijsame}]
The fact that $w_x^c\in\F(\Sn)$ is given by Corollary~\ref{cor:thmpart1}. To show that $\varphi(x)=w_x^c$ we show that $I_{w_x^c}=I_{\varphi(x)}$ and $J_{w_x^c}=J_{\varphi(x)}$. The claim follows since the map $g:\F(\Sn)\longrightarrow \mathcal{I}(n), w\mapsto (J_w, I_w[1])$ from Subsection~\ref{sub:bijwithful} is bijective. More precisely by Theorem~\ref{thm:bijecgenerales} it suffices to show that for any $i=2,\dots, n+1$, 
$$i-1\in I_{w_x^c}\Leftrightarrow\left\{
    \begin{array}{l}
        i\in R_{c}\mbox{ and }i\mbox{ is a non initial index of a polygon of }x\mbox{, or}\\
        i\in L_{c}\mbox{ and }i\mbox{ is a terminal index of a polygon of }x\mbox{, or}\\
        i\in L_{c}\mbox{ and }i\notin\supp(x)\mbox{ and $i$ is nested in a polygon of }x.\\ 
    \end{array}\right. 
    $$
    
and that for any $i=1,\dots, n$, 
$$i\in J_{w_x^c}\Leftrightarrow\left\{ 
    \begin{array}{l}
        i\in R_{c}\mbox{ and }s_i\mbox{ is a center of }m_x^c\mbox{, or}\\
        i\in L_{c}\mbox{ and }i\mbox{ is an initial index of a polygon of }x\mbox{, or}\\
        i\in L_{c}\mbox{ and }i\notin\supp(x)\mbox{ and }i\mbox{ is nested in a polygon of }x.\\ 
    \end{array}\right.
    $$

We only show the equivalence involving the set $J_{w_x^c}$ --- the proof of the other one is similar. Assume that $i$ satisfies one of the three conditions on the right hand side. We then show that $i\in J_{w_x^c}$. Recall from Lemma~\ref{lem:ij} that $i\in J_{w_x^c}$ in and only if there is no occurrence of $s_{i-1}$ in $w_x^c$ after the last occurrence of $s_i$. 

If $i\in R_c$ and $s_i$ is a center of $m_x^c$, then the first occurrence of $s_i$ in $m_x^c$ is at the center of a syllable by Lemma~\ref{lem:centre2}. Arguing as in the proof of Lemma~\ref{lem:occ}, we see that all the other contributions of $s_i$ come from the right of the syllables and that there can be no occurrence of $s_{i-1}$ after the last occurrence of $s_i$ because it would be at the top of a syllable and such a syllable would occur before the syllable of which $s_i$ is the center.

If $i\in L_{c}$ and $i$ is a minimal index of a polygon, then $s_i$ is a center of a syllable of $m_x^c$, hence the last occurrence of $s_i$ in $m_x^c$ is as a center (Lemma \ref{lem:centre2}). Moreover, by assumption there is no syllable with $s_{i-1}$ at its top since $i$ is the minimal index of its polygon. Hence there is no $s_{i-1}$ after the last occurrence of $s_i$ in $w_x^c$. 

If $i\in L_{c}$, $i\notin\supp(x)$ and $i$ is nested in a polygon of $x$, then by assumption there is no syllable with $s_{i-1}$ at its top (otherwise $i$ would be terminal implying $i\in\supp(x)$) and $s_i$ is contributed from the right of any syllable in which it occurs by rule~(4), implying again that there is no $s_{i-1}$ after the last occurrence of $s_i$.

In all cases we showed that there is no occurrence of $s_{i-1}$ after the last occurrence of $s_i$ in $w_x^c$. By Lemma~\ref{lem:ij} it implies that $i\in J_{w_x^c}$.  

We now assume that the condition on the right hand side is not satisfied. We first treat the case where $i=1$. In that case, the last condition is never satisfied, and if the first two are also not satisfied, then in particular $s_i$ cannot be a center of $m_x^c$. But if it occurs in $m_x^c$, it is necessary as a center since $i=1$, hence $s_i$ does not occur in $m_x^c$, hence also not in $w_x^c$. 

Now assume that $i\neq 1$. In that case the conditions $\{i\in R_c\}$ and $\{i\in L_c\}$ are disjoint. Firstly suppose that $i\in R_c$ and that $s_i$ is not a center of $m_x^c$. It follows that if $s_i$ occurs in a syllable, then $s_{i-1}$ also occurs, and since by rule~(5) the $s_i$ of any syllable is contributed from the left, there is always an $s_{i-1}$ at its right which is contributed. Therefore there is an occurrence of $s_{i-1}$ after the last occurrence of $s_i$, implying that $i\notin J_{w_x^c}$. 

Now assume that $i\in L_c$ and that $i$ is not initial and that either $i\in\supp(x)$ or $i$ is not nested in a polygon of $x$. In particular, if $s_i$ occurs in $m_x^c$ then $s_i$ is either a center or $i$ is a terminal index. If it is a center, then the last occurrence of $s_i$ in $m_x^c$ is as a center, but since $i$ is not initial there is a syllable with $s_{i-1}$ at its top which appears after the syllable with $s_i$ at its center (because $i\in L_c$). If $s_i$ is not a center then $i$ is terminal. It implies that there is a syllable with $s_{i-1}$ at its top. By Lemma~\ref{lem:top}, it must be the last syllable $w$ containing $s_{i-1}$. If $s_i$ occurs in $m_x^c$ it then occurs exactly in any other syllable distinct from $w$ which contains $s_{i-1}$ and those occur before $w$. Hence there is again an $s_{i-1}$ appearing after the last $s_i$ implying $i\notin J_{w_x^c}$. 

\end{proof}

\subsection{Triangularity}

Recall from Proposition~\ref{prop:fullya} that given $w\in\F(\Sn)$, the number of occurrences of $s_i$ is constant on $\S$-reduced expressions of $w$ and denoted by $n_i(w)$. For $x\in\NC(\Sn, c)$ also recall from Definition~\ref{def:vertical} the notation $(x_i^c)_{i=1}^n$ for the vertical vector of $x$, where $x_i^c$ is the number of occurrences of $s_i$ in $m_x^c$. It is easy to derive $n_i(w_x^c)$ from $x_i^c$: 

\begin{lemma}\label{lem:stdvert}
Let $x\in\NC(\Sn,c)$. Let $i\in[n]$. \begin{enumerate}
\item If $s_i$ is a center of $m_x^c$, then $2n_i(w_x^c)-1=x_i^c$,
\item If $s_i$ is not a center of $m_x^c$, then $2 n_i(w_x^c)= x_i^c$.
\end{enumerate}
\end{lemma}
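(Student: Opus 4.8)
The plan is to express both quantities $x_i^c$ and $n_i(w_x^c)$ in terms of a single combinatorial number, namely the number $m$ of syllables of $m_x^c$ in which the letter $s_i$ occurs, and then read off the two claimed identities according to the parity of the contribution of a central letter.

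First I would recall the precise shape of a syllable. The syllable representing a transposition $(a,b)$ with $a<b$ is the $\S$-word $s_{b-1}s_{b-2}\cdots s_{a+1}s_a s_{a+1}\cdots s_{b-2}s_{b-1}$, whose center is $s_a$. Within this single word the center $s_a$ occurs exactly once, while every non-central letter $s_k$ with $a<k\le b-1$ occurs exactly twice, once in the left part and once in the right part. Hence, in any fixed syllable, the letter $s_i$ occurs once if it is the center of that syllable and twice otherwise (when it occurs at all).

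Next I would sum these local counts over the $m$ syllables of $m_x^c$ containing $s_i$; this count is well-defined since $m_x^c$ is unique up to commutation of syllables with disjoint support. Here the crucial global input is Lemma~\ref{std}(3), that $s_i$ is the center of at most one syllable of $m_x^c$. So if $s_i$ is a center of $m_x^c$, it is the center of exactly one of these $m$ syllables (contributing a single occurrence) and is non-central in the remaining $m-1$ (two occurrences each), giving $x_i^c=1+2(m-1)=2m-1$; whereas if $s_i$ is a center of none of them, every syllable contributes two, giving $x_i^c=2m$.

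Finally I would compute $n_i(w_x^c)$ using rule~(1) of Definition~\ref{extracting}: each syllable contributes every letter occurring in it to $w_x^c$ exactly once, so the $m$ syllables containing $s_i$ contribute precisely $m$ copies of $s_i$, and these are the only occurrences of $s_i$ in $w_x^c$. Since $w_x^c$ is a reduced expression of a fully commutative element by Corollary~\ref{cor:thmpart1}, Proposition~\ref{prop:fullya} guarantees that this number is well-defined and equals $n_i(w_x^c)$, so $n_i(w_x^c)=m$. Substituting $m=n_i(w_x^c)$ into the two cases above yields $2n_i(w_x^c)-1=x_i^c$ when $s_i$ is a center and $2n_i(w_x^c)=x_i^c$ otherwise. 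The argument is essentially bookkeeping, and the only point requiring care — which I would flag as the main subtlety — is the appeal to Lemma~\ref{std}(3) to exclude $s_i$ from being central in more than one syllable, since this is exactly what determines the parity separating the two parts of the statement.
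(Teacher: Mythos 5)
Your proof is correct and is essentially the paper's own argument, spelled out in more detail: the paper likewise combines the facts that each syllable contributes every letter in its support to $w_x^c$ exactly once, that a letter occurs in a syllable once if it is the center and twice otherwise, and that a letter is the center of at most one syllable (Lemma~\ref{std}). Your bookkeeping via the number $m$ of syllables containing $s_i$ is just an explicit organization of that same counting.
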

\begin{proof}
This follows immediately from the definition of $w_x^c$: any syllable contributes any letter from $\S$ occurring in it exactly once. A letter occurs in a syllable at most twice, and it occurs once if and only if it is a center. A letter can be the center of at most one syllable (Lemma~\ref{std}).
\end{proof}

For $x\in\F(\Sn)$, we denote by $\Subf(x)$ the set of fully commutative elements having a reduced expression which is a subword of $m_x^c$. By Corollary~\ref{cor:thmpart1} we have $w_x^c\in\Subf(x)$.

\begin{exple}
Let $c=(1,3,4,2)$ and $x=(1,3)(2,4)$ as in Example~\ref{explecc}. We have $m_x^c=(s_2 s_1 s_2)(s_3 s_2 s_3)$ and $w_x^c=s_2 s_1 s_3 s_2=\varphi(x)$. We have $$\Subf(x)=\{e, s_1, s_2, s_3, s_2s_1, s_1s_2, s_2s_3, s_3s_2, s_1s_3, s_1s_2s_3, s_2s_1s_3, s_1s_3s_2, w_x^c \}.$$
Note that in that case, $m_x^c$ is not a reduced expression of $x=s_2s_1s_3s_2$. Let $w=s_1s_2s_3\in \Subf(x)$. Setting $y=(1,3,4)$ we have $\varphi(y)=w$ (see Figure~\ref{fig:thebijection}) and $m_y^c=s_2 s_1 s_2 s_3$ but $y$ is not below $x$ for Bruhat order. We have $$(y_i^c)_{i=1}^3=(1,2,1) \leq (1,3,2)=(x_i^c)_{i=1}^3$$ hence $y \leqv x$. We will show in this section that for $w\in \Subf(x)$ and $y\in\NC(\Sn,c)$ such that $\varphi(y)=w$, we have $y \leqv x$. In case $c=c_{\mathrm{lin}}$ we have that $\leqv$ coincides with the restriction of the Bruhat on $\Sn$ to $\NC(\Sn,c)$ (see~\cite{GobWil}), but the above example shows that in general it fails, and $\leqv$ is precisely the order to use as a replacement for Bruhat order in the general case to get the same poset structure as for $c_{\mathrm{lin}}$, and we will see in this section that $\leqv$ is also the right order to consider to have triangular base changes $\tl$. 
\end{exple}

\begin{lemma}\label{lem:wxunique}
Let $x\in\NC(\Sn,c)$. Then $w_x^c$ is the unique subword of $m_x^c$ which is a reduced expression of the fully commutative element $\varphi(x)$. 
\end{lemma}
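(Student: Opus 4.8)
The plan is to take an arbitrary reduced subword $w$ of $m_x^c$ that represents $\varphi(x)$ and show that it selects exactly the same positions as $w_x^c$. The starting observation is that $\varphi(x)$ is fully commutative, so by Proposition~\ref{prop:fullya} the number of occurrences of each $s_i$ is the same in every reduced expression; hence $n_i(w)=n_i(\varphi(x))=n_i(w_x^c)$ for all $i$. By the very construction in Definition~\ref{extracting}(1), $n_i(w_x^c)$ equals the number of syllables of $m_x^c$ in which $s_i$ occurs. So $w$ must use exactly that many copies of each $s_i$, chosen among the occurrences of $s_i$ in $m_x^c$, and the whole problem is to show that these choices are forced.

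The first step (the ``one copy per syllable'' step) is to show that $w$ selects exactly one occurrence of $s_i$ from each syllable containing $s_i$. For $1\le i\le n-1$ I would argue by contradiction: if some syllable contributed two copies of $s_i$ to $w$, these would be \emph{successive} occurrences of $s_i$ in $w$, since inside a single syllable $s_i$ appears exactly twice and no letters of other syllables lie between them; yet inside a syllable there is no $s_{i+1}$ between the two $s_i$'s (the two $s_{i+1}$'s, when present, lie outside the two $s_i$'s, and if the syllable has top $s_i$ then $s_{i+1}$ is absent altogether). This contradicts the criterion of Lemma~\ref{lem:fullocc}. As no syllable gives two copies and the total count matches the number of syllables containing $s_i$, exactly one is taken from each. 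For the boundary letter $s_n$, where this argument is unavailable, I would note that $s_n$ occurs in at most one syllable (only a reflection $(a,n+1)$ produces it, and $n+1$ lies in a single polygon), so the statement is immediate from the count.

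The second and harder step is to determine \emph{which} of the two copies is selected in a syllable where $s_i$ occurs twice. First, $w$ and $w_x^c$ automatically agree on every center, since a center is the unique occurrence of its letter in its syllable (Lemma~\ref{std}) and must therefore be selected by both. Using the inverse symmetry of Remarks~\ref{rmq:inversec} and~\ref{rmq:inversec2} I would reduce to $i\in R_c$, and then split on whether $s_i$ is a center of $m_x^c$. If $s_i$ is a center, Lemma~\ref{lem:centre2}(2) puts its unique central occurrence in the \emph{first} syllable containing $s_i$; if $s_i$ is not a center, Lemmas~\ref{lem:top} and~\ref{lem:topcenter} locate its top occurrence as the first or last such syllable. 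This distinguished occurrence serves as an anchor, and the requirement from Lemma~\ref{lem:fullocc} that exactly one $s_{i-1}$ and one $s_{i+1}$ separate consecutive occurrences of $s_i$, read against the anchor, forces the remaining selections to be the right copy when $s_i$ is a center (matching rule~(3)) and the left copy when it is not (matching rule~(5)), i.e. precisely the copies taken by $w_x^c$.

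The main obstacle is this second step. The ``one copy per syllable'' reduction is clean and purely combinatorial, but showing that each binary left/right choice is forced requires carefully threading the alternation condition of Lemma~\ref{lem:fullocc} through the total order on polygons encoded in Lemmas~\ref{lem:centre2},~\ref{lem:top} and~\ref{lem:topcenter}, and keeping track of where the forced centers sit relative to the doubly-occurring letters. Once the selected positions are shown to coincide with those of $w_x^c$ for every $i$, the two subwords are equal as subwords of $m_x^c$, which is exactly the asserted uniqueness.
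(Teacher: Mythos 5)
Your first step coincides with the first half of the paper's proof and is fine: since $n_i(w)=n_i(\varphi(x))$ for every $i$ (Proposition~\ref{prop:fullya}), and since two copies of $s_i$ taken from a single syllable would be successive occurrences of $s_i$ with no $s_{i+1}$ between them (contradicting Lemma~\ref{lem:fullocc}), every subword of $m_x^c$ representing $\varphi(x)$ must take each letter of each syllable's support exactly once. (Your special-casing of $s_n$ is unnecessary---the same argument applies, as the alternation condition then cannot be met at all---but it is harmless.)

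The second step, however, has a genuine gap, and it is not just a matter of missing details. The forcing mechanism you propose uses only two kinds of information: that $w$ is a reduced expression of \emph{some} fully commutative element (Lemma~\ref{lem:fullocc}), and the location of centers and tops (Lemmas~\ref{lem:centre2}, \ref{lem:top}, \ref{lem:topcenter}). Both are insensitive to the left/right choices you are trying to pin down, so they cannot force those choices. Concretely, take the paper's running example $c=(1,3,4,2)$ and $x=(1,3)$, so that $m_x^c=s_2s_1s_2$ is a single syllable with center $s_1$. The two one-per-syllable selections $s_2s_1$ and $s_1s_2$ are both reduced expressions of fully commutative elements, all alternation constraints of Lemma~\ref{lem:fullocc} are vacuous (each letter is selected once), and the anchor data is identical for both; yet they represent different elements of $\F(\Sn)$, and only $s_1s_2=\varphi(x)$ (see Figure~\ref{fig:thebijection}). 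What actually singles out the correct selection is the hypothesis you never use after the counting step: that $w$ represents $\varphi(x)$ itself, not merely some fully commutative element. The paper's proof exploits exactly this: if $w$ and $w_x^c$ differ in a syllable $w'$ on a letter $s_i$, then---using the one-per-syllable property, which gives equal numbers of $s_i$'s and of $s_{i-1}$'s contributed by the syllables to the left of $w'$ in both words---the relative order of the occurrences of $s_i$ and $s_{i-1}$ differs in the two subwords; since commutations never exchange $s_i$ with $s_{i-1}$, this relative order (equivalently, the data of Lemma~\ref{lem:ij}) is an invariant of the element represented, so $w$ and $w_x^c$ would be reduced expressions of two \emph{distinct} fully commutative elements, a contradiction. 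Replacing your appeal to Lemma~\ref{lem:fullocc} in the second step by this comparison of invariants of the element $\varphi(x)$ is what is needed to close the argument; as written (and as you yourself concede), the decisive step is not carried out, and the tool chosen for it cannot carry it out.
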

\begin{proof}
Assume that there is a subword $w$ of $m_x^c$ which is an $\mathcal{S}$-reduced expression of $\varphi(x)$. Then any syllable of $m_x^c$ must contribute to that subword each letter from its support exactly once. Indeed, if not, it means that there is a syllable somewhere contributing two instances of a reflection $s_i$ in its support (or no instances of $s_i$, but in that case there must be another syllable which contributes two $s_i$ to balance the missing one), which ends in a word which cannot be a reduced expression of a  fully commutative element since there cannot be an occurrence of $s_{i+1}$ between these two $s_i$ (see Lemma~\ref{lem:fullocc}).

Now assume that the subword $w$ is distinct from the subword $w_x^c$. It means that there exists at least one syllable $w'$ of $m_x^c$ with a distinct contribution to $w$ and $w_x^c$, that is, one reflection $s_i$ appearing in $w'$ which is contributed from the left of the syllable in one case and from the right in the other case. Hence in one case, the contribution of $s_i$ from $w'$ is before the contribution of $s_{i-1}$ from $w'$ and after in the other case. Since by the first part of the proof there must be the same number of contributions of $s_i$ and $s_{i-1}$ coming from syllables lying at the left of $w'$ in both $w$ and $w_x^c$, it implies that one word among $w$ and $w_x^c$ has no occurrence of $s_i$ before the first $s_{i-1}$ while the second one has, hence that they cannot represent the same fully commutative element (Lemma~\ref{lem:ij}). It follows that $w_x^c$ is the unique subword of $m_x^c$ which is a reduced expression of $\varphi(x)$. 
\end{proof}

\begin{lemma}\label{lem:2sous}
Let $s_{i_1} s_{i_2}\cdots s_{i_k}$ be an $\S$-word. By Proposition~\ref{prop:jones}~(3), there is a unique pair $(w,m)\in\F(\Sn)\times\mathbb{Z}_{\geq 0}$ such that $$b_{i_1} b_{i_2}\cdots b_{i_k}=(v+v^{-1})^m b_w\in\tl.$$
Then there is at least one subword of $s_{i_1} s_{i_2}\cdots s_{i_k}$ which is a reduced expression of $w$. If $m>0$, there are at least two distinct subwords which are reduced expressions of $w$. 
\end{lemma}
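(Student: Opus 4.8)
The plan is to induct on the length $k$ of the word $W:=s_{i_1}\cdots s_{i_k}$, peeling off the last letter $s_j$ with $j=i_k$. Writing $W'=s_{i_1}\cdots s_{i_{k-1}}$, Proposition~\ref{prop:jones}(3) gives a unique pair $(w',m')$ with $b_{i_1}\cdots b_{i_{k-1}}=(v+v^{-1})^{m'}b_{w'}$, so that $b_W=(v+v^{-1})^{m'}b_{w'}b_j$. The whole argument rests on the elementary multiplication rule for the diagram basis, which I would first record (it follows from the defining relations of $\tl$ together with Lemma~\ref{lem:fullocc}): for $u\in\F(\Sn)$ and a generator $b_j$ exactly one of the following holds. \textbf{(A)} If $\ell(us_j)>\ell(u)$ and $us_j\in\F(\Sn)$, then $b_ub_j=b_{us_j}$. \textbf{(B)} If $\ell(us_j)<\ell(u)$, then, writing a reduced word for $u$ ending in $s_j$, the relation $b_j^2=(v+v^{-1})b_j$ gives $b_ub_j=(v+v^{-1})b_u$. \textbf{(C)} If $\ell(us_j)>\ell(u)$ but $us_j\notin\F(\Sn)$, then $u$ has a reduced expression ending in $s_js_{i'}$ for a neighbour $i'\in\{j-1,j+1\}$; writing $u=u''s_js_{i'}$ and applying the braid relation $b_jb_{i'}b_j=b_j$ gives $b_ub_j=b_{u''s_j}=b_{us_{i'}}$, where $s_{i'}$ is a right descent of $u$ and $s_j$ is a right descent of $us_{i'}$. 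Note $m$ increases by one in case (B) and is unchanged in (A) and (C).

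For the existence of one subword (the first assertion) I would run the induction directly, applying the rule with $u=w'$. By induction $W'$ contains a subword $U'$ which is a reduced expression of $w'$; as a subword of $W'$ it is also a subword of $W$. In case (A), where $w=w's_j$, the word $U's_j$ (appending the final letter of $W$) is a reduced expression of $w$. In case (B), $w=w'$ and $U'$ itself already works. In case (C), $w=w's_{i'}$ with $s_{i'}$ a right descent of $w'$; by the exchange condition the reduced word $U'$ for $w'$ has a letter whose deletion yields a reduced word for $w's_{i'}=w$, and this deleted word is again a subword of $W'$, hence of $W$. This settles the first assertion. The empty word $k=0$ is the base case, with $w=e$, $m=0$ and the empty subword.

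For the second assertion I keep the peeling induction and examine the final step; the device guaranteeing \emph{distinct} subwords is to force one of them to use the final letter $s_j$ (position $k$) and another to avoid it. In case (A), $m=m'$, so $m>0$ forces $m'>0$ and the inductive second assertion gives two subwords of $W'$ for $w'$; appending the final $s_j$ to each keeps them distinct. In case (B), $m=m'+1>0$ always: starting from one subword $U'\subseteq W'$ of $w'=w$ (which avoids position $k$), the exchange condition with the descent $s_j$ yields a second reduced word for $w$ whose last letter is the final $s_j$ (position $k$), hence distinct from $U'$. In case (C), $m=m'$: from a single subword $R\subseteq W'$ of $w'$ I exchange out one letter (descent $s_{i'}$) to get $\bar R\subseteq W'$, a reduced word for $w=w's_{i'}$ not using position $k$; then, using that $s_j$ is a right descent of $w$, I exchange out a further letter to get $\bar{\bar R}\subseteq W'$, a reduced word for $ws_j=u''$, and append the final $s_j$ to obtain $\bar{\bar R}\,s_j$, a reduced word for $w$ that does use position $k$. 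The two subwords $\bar R$ and $\bar{\bar R}\,s_j$ are then distinct, so whenever $m>0$ we exhibit two distinct subwords.

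The step I expect to be most delicate is the verification of the multiplication rule in case (C): that $us_j\notin\F(\Sn)$ with $\ell(us_j)>\ell(u)$ forces a reduced expression of $u$ ending in $s_js_{i'}$ for a single neighbour $i'$, so that the quadratic and braid relations collapse $b_ub_j$ to $b_{us_{i'}}$ with $s_j$ a right descent of $us_{i'}$. I would extract this from Lemma~\ref{lem:fullocc} applied to a reduced word of $u$ followed by $s_j$: full commutativity can fail only through the last $s_j$ of $u$ and the appended one, and since $u$ is itself fully commutative the suffix of any reduced word of $u$ after its last $s_j$ can contain at most one $s_{j-1}$ and at most one $s_{j+1}$; reducedness rules out the suffix containing neither neighbour, leaving exactly one neighbour $s_{i'}$, which is precisely the required structure. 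The second genuinely important point, and the reason the argument is more than bookkeeping, is ensuring the \emph{distinctness} of the two subwords in the length-decreasing situations; this is exactly what the final-letter device (one subword using position $k$, the other avoiding it), combined with the exchange condition, is designed to secure.
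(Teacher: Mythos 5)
Your cases (A) and (B) are correct, but there is a genuine gap in case (C), which is exactly the step you flagged as delicate: the structural claim that $\ell(us_j)>\ell(u)$ and $us_j\notin\F(\Sn)$ force $u$ to have a reduced expression ending in $s_js_{i'}$ is \emph{false}. Take $u=s_2s_3s_4$ (in $\mathfrak{S}_5$, say) and $j=2$. Then $u$ is fully commutative and, since no two adjacent letters of $s_2s_3s_4$ commute, this is its \emph{unique} reduced expression; moreover $\ell(us_2)=4>3$ and $us_2=s_2s_3s_2s_4=s_3s_2s_3s_4$ is not fully commutative, so we are in case (C). Yet the unique reduced expression of $u$ ends in $s_3s_4$, not in $s_2s_1$ or $s_2s_3$. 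The flaw in your justification is that the suffix after the last $s_j$ (here $s_3s_4$) contains, besides the unique neighbour $s_{i'}=s_3$, letters which commute with $s_j$ but \emph{not} with $s_{i'}$ (here $s_4$), so $s_{i'}$ cannot be commuted to the end of the word. The conclusion of (C) fails as well: here $b_ub_2=b_2b_3b_4b_2=b_2b_3b_2b_4=b_2b_4=b_{s_2s_4}$, whereas $us_{i'}=us_3$ has length $4$ and is not even fully commutative, so $b_{us_{i'}}$ is undefined; in particular $w\neq w's_{i'}$, and $s_{i'}$ is not a right descent of $w'$. Since both your existence argument and your two-subword argument invoke rule (C), both break at this point.

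The underlying problem is that in case (C) a \emph{single} application of $b_jb_{i'}b_j=b_j$ need not land on a reduced word of a fully commutative element: after the collapse the resulting word may again fail the criterion of Lemma~\ref{lem:fullocc}, and the reduction must be iterated, with the length dropping at each stage. This is why the paper does not attempt a one-step multiplication rule at all: it argues by global rewriting, applying $b_i^2=(v+v^{-1})b_i$, $b_ib_{i\pm1}b_i=b_i$ and commutations repeatedly, and observing that each application merges two occurrences of some $b_i$ into one, so that a reduced word of the final $w$ can be traced back as a subword of the original word, in at least two ways (two choices of surviving $b_i$) once $m>0$. Your peeling induction and the position-$k$ device for distinctness in cases (A) and (B) are fine, but to repair case (C) you would have to prove precisely the statement of the lemma for words of the form (reduced fully commutative word)$\cdot s_j$, i.e., fall back on an iterated-reduction argument of the paper's type, so the proposal as written does not constitute a proof.
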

\begin{proof}
This is a consequence of the fact that if $s_{i_1}\cdots s_{i_k}$ is not fully commutative (which is equivalent to saying that $m>0$), then one can apply either the relation $b_i^2=(v+v^{-1})b_i$ or the relation $b_i b_{i\pm 1} b_i=b_i$ in $b_{i_1} b_{i_2}\cdots b_{i_k}$ (possibly after having applied commutation relations); but since in any of these two relations, there are two $b_i$ which reduce to a single $b_i$, after applying successive such relations and possibly commutation relations, the resulting $w$ must have an $\mathcal{S}$-reduced expression which is a subword of the original word. But then it will be a subword in at least two different ways since there are two possible choices for the $s_i$. 
\end{proof}
\begin{lemma}\label{lem:stdvert2}
Let $x\in\NC(\Sn,c)$, $w\in\Subf(x)$. Let $i\in[n]$. 
\begin{enumerate}
\item If $s_i$ is a center of $m_x^c$, then $2n_i(w)-1\leq x_i^c$,
\item If $s_i$ is not a center of $m_x^c$, then $2 n_i(w)\leq x_i^c$. 
\end{enumerate}
\end{lemma}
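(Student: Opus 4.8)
The plan is to reduce the statement to a per-syllable count, in the spirit of the proof of Lemma~\ref{lem:stdvert}, of which this is the natural one-sided generalization from $w_x^c$ to an arbitrary $w\in\Subf(x)$. Recall that a syllable representing a transposition $(i',j')$ with $i'<j'$ is the word $s_{j'-1}s_{j'-2}\cdots s_{i'}\cdots s_{j'-1}$, in which the center $s_{i'}$ occurs exactly once while each non-central letter $s_{i'+1},\dots,s_{j'-1}$ occurs exactly twice (once in the left part, once in the right part). Combined with Lemma~\ref{std}(3), which says that $s_i$ can be the center of at most one syllable of $m_x^c$, this immediately controls the parity of $x_i^c$: writing $a$ (resp. $b$) for the number of syllables containing $s_i$ in which $s_i$ is non-central, one has $x_i^c=1+2a$ when $s_i$ is a center of $m_x^c$ and $x_i^c=2b$ otherwise.

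The heart of the argument is the following claim: \emph{if $w$ has a reduced expression that is a subword of $m_x^c$, then this subword selects at most one letter $s_i$ from each syllable of $m_x^c$.} First I would fix such a reduced subword; since $w$ is fully commutative, the number of its $s_i$'s equals $n_i(w)$ by Proposition~\ref{prop:fullya}. Suppose for contradiction that the subword picked up both occurrences of $s_i$ inside a single syllable (so $s_i$ is non-central there). These two letters sit at consecutive positions of the contiguous block that the syllable forms in $m_x^c$, so no letter coming from another syllable lies between them; moreover, inside the syllable every letter strictly between the two $s_i$'s has index at most $i-1$. Hence in the subword these two $s_i$'s are \emph{successive} occurrences of $s_i$, with no $s_{i+1}$ between them. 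This contradicts Lemma~\ref{lem:fullocc}, which forces exactly one $s_{i+1}$ between successive occurrences of $s_i$; for the boundary values $i\in\{1,n\}$ the same lemma already forbids $s_i$ from occurring twice at all, so the claim holds uniformly.

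Granting the claim, $n_i(w)$ is at most the number of syllables of $m_x^c$ containing $s_i$. When $s_i$ is a center this count is $1+a$, so $2n_i(w)-1\le 2(1+a)-1=1+2a=x_i^c$; when $s_i$ is not a center it is $b$, so $2n_i(w)\le 2b=x_i^c$. This yields both inequalities. I expect the only delicate point to be the justification of the claim --- specifically, arguing that two in-syllable copies of $s_i$ really become \emph{successive} occurrences in the subword, which relies on the syllable being a contiguous block of $m_x^c$ so that nothing from other syllables can be interleaved between them; once this is pinned down, the application of Lemma~\ref{lem:fullocc} and the final counting are routine.
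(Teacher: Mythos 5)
Your proof is correct and follows essentially the same route as the paper: the key step in both is that a reduced subword of $m_x^c$ representing a fully commutative element can select at most one occurrence of $s_i$ from each syllable, since otherwise two successive $s_i$'s would appear with no $s_{i+1}$ between them, contradicting Lemma~\ref{lem:fullocc}, after which the counting (using Lemma~\ref{std}(3)) gives both inequalities. Your write-up is in fact more careful than the paper's, since you justify explicitly why the two in-syllable occurrences of $s_i$ become successive occurrences with no intervening $s_{i+1}$ (contiguity of the syllable block plus the fact that all letters between them have index at most $i-1$), a point the paper leaves implicit.
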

\begin{proof}
Each syllable of $m_x^c$ containing $s_i$ can contribute at most one of the instances of $s_i$ to a subword which is a reduced expression of $w$ (otherwise we have two $s_i$ with no $s_{i+1}$ in between, contradicting Lemma~\ref{lem:fullocc}). Depending on whether $s_i$ is the center of a syllable of $m_x^c$, we get the claim (keeping in mind that a letter is the center of at most one syllable). 
\end{proof}
Note that by Lemma~\ref{lem:stdvert} we have equality in the above lemma when $w=w_x^c$. In general there are several $w\in\Subf(x)$ for which one has equality for all $i$ (the vector $(n_i(w))_{i=1}^n$ does not characterize $w$ in general), for instance this always happens if $x$ is a non-simple transposition. 

The following result is the main ingredient for proving the triangularity of the base-change between the diagram and Zinno bases of $\tl$.

\begin{theorem}\label{thm:zinnogen}
Let $x, y\in\NC(\Sn,c)$ such that $w:=\varphi(y)\in\Subf(x)$. Then $y \leqv x$. 
\end{theorem}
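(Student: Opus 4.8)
The plan is to compare the vertical vectors of $x$ and $y$ coordinate by coordinate, that is, to prove $y_i^c\le x_i^c$ for every $i\in[n]$. First I would invoke Theorem~\ref{thm:bijsame} to rewrite $w=\varphi(y)=w_y^c$, so that the number $n_i(w)$ of occurrences of $s_i$ in $w$ equals the number of syllables of $m_y^c$ containing $s_i$. Writing $\delta_i^x=1$ if $s_i$ is a center of $m_x^c$ and $0$ otherwise (and likewise $\delta_i^y$ for $m_y^c$), Lemma~\ref{lem:stdvert} gives the exact value $y_i^c=2n_i(w)-\delta_i^y$. On the other hand, since $w\in\Subf(x)$, each syllable of $m_x^c$ contributes at most one $s_i$ to a reduced subword realizing $w$, which is the content of Lemma~\ref{lem:stdvert2}; hence $n_i(w)\le M_i$, where $M_i$ denotes the number of syllables of $m_x^c$ containing $s_i$. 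Applying Lemma~\ref{lem:stdvert} to $w_x^c=\varphi(x)$, for which $n_i(w_x^c)=M_i$, yields $x_i^c=2M_i-\delta_i^x$. Subtracting, $x_i^c-y_i^c=2\bigl(M_i-n_i(w)\bigr)+(\delta_i^y-\delta_i^x)$. Since $M_i-n_i(w)\ge 0$ and $\delta_i^y-\delta_i^x\ge -1$, this is nonnegative in every case except possibly when $\delta_i^x=1$, $\delta_i^y=0$ and $n_i(w)=M_i$ hold simultaneously; so everything reduces to excluding this single configuration.

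To handle it I would first use Remarks~\ref{rmq:inversec} and \ref{rmq:inversec2} to assume $i\in R_c$ without loss of generality. For such $i$ the equivalences established inside the proof of Theorem~\ref{thm:bijsame} (via Theorem~\ref{thm:bijecgenerales}) read $\delta_i^x=1\Leftrightarrow i\in J_{\varphi(x)}$ and $\delta_i^y=1\Leftrightarrow i\in J_{w}$, because for $i\in R_c$ membership of $i$ in $J$ is precisely the condition that $s_i$ be a center. Thus the offending case translates into: $i\in R_c$, $i\in J_{\varphi(x)}$, $i\notin J_w$ and $n_i(w)=M_i$. By Lemma~\ref{lem:ij}(2), $i\notin J_w$ means that some $s_{i-1}$ occurs after the last $s_i$ in $w$. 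I would then realize $w$ as a reduced subword $W$ of $m_x^c$; since $n_i(w)=M_i$, this subword selects exactly one $s_i$ from every syllable of $m_x^c$ containing $s_i$, so its last $s_i$ lies in the last such syllable $\sigma$, which by Lemma~\ref{lem:centre2} is not the center syllable of $s_i$ (the center syllable being the first one, as $i\in R_c$).

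The heart of the argument, and the step I expect to be the main obstacle, is to show that this configuration forces $i\in J_w$, contradicting $i\notin J_w$. The idea is to locate, using Lemmas~\ref{lem:top} and \ref{lem:topcenter}, every syllable of $m_x^c$ in which an $s_{i-1}$ could be contributed to $W$ after the position of its last $s_i$: such an $s_{i-1}$ must sit at the top of a syllable, and the control over tops provided by those lemmas, together with the ordering of polygons underlying Lemma~\ref{lem:centre2}, places every such syllable strictly before $\sigma$, exactly as in the $J$-direction of the proof of Theorem~\ref{thm:bijsame}. One must also track which instance of $s_i$ inside $\sigma$ the subword selects and rule out, via the full-commutativity criterion of Lemma~\ref{lem:fullocc}, an intervening $s_{i-1}$ within $\sigma$ itself; this is the delicate part, since the subword $W$ need not follow the extraction rules of Definition~\ref{extracting}. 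Once this is carried out, the offending configuration never occurs, so $M_i>n_i(w)$ whenever $\delta_i^x=1$ and $\delta_i^y=0$, and therefore $x_i^c-y_i^c\ge 0$ for all $i$, i.e.\ $y\leqv x$.
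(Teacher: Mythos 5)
Your first paragraph is correct and is precisely the paper's own reduction: by Theorem~\ref{thm:bijsame} and Lemma~\ref{lem:stdvert}, $y_i^c=2n_i(w)-\delta_i^y$ and $x_i^c=2M_i-\delta_i^x$, and Lemma~\ref{lem:stdvert2} gives $n_i(w)\le M_i$, so the only possible failure of $y_i^c\le x_i^c$ is the configuration $\delta_i^x=1$, $\delta_i^y=0$, $n_i(w)=M_i$. Your reduction to $i\in R_c$ via Remarks~\ref{rmq:inversec} and \ref{rmq:inversec2} is legitimate (all the data of the configuration transfer under $x\mapsto x^{-1}$, $y\mapsto y^{-1}$, $c\mapsto c^{-1}$), and is in fact slightly tidier than the paper, which instead runs a second, symmetric case for $i\in L_c$ phrased in terms of $I_w$. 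The translation of the configuration into ``$i\in R_c$, $s_i$ a center of $m_x^c$, $i\notin J_w$, $n_i(w)=M_i$'' via Theorem~\ref{thm:bijecgenerales} is also exactly how the paper closes the argument.

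However, the proof stops exactly where the real work begins: the claim that this configuration forces $i\in J_w$ is never established — you yourself label it ``the main obstacle'' and ``the delicate part'' to be ``carried out''. This claim is the heart of the paper's proof, and it is obtained there as follows: (1) since $i\in R_c$, the \emph{first} syllable of $m_x^c$ containing $s_i$ has $s_i$ as its center (Lemma~\ref{lem:centre2}), and its unique $s_i$ must be selected by any subword contributing one $s_i$ per syllable; (2) any syllable with $s_{i-1}$ at its top has $i$ as a non-initial vertex of its polygon (Lemma~\ref{std}), so by vertex-disjointness of polygons it lies in the same polygon as the center syllable and occurs before the first $s_i$; (3) every other syllable containing $s_{i-1}$ also contains $s_i$; and (4) — the key induction — going left to right along the syllables containing $s_i$, each must contribute its \emph{rightmost} $s_i$ to the subword $W$, since otherwise $W$ would contain two successive $s_i$'s with no $s_{i-1}$ between them (all letters of a syllable before its left $s_i$, and after its right $s_i$, have index $>i$), contradicting Lemma~\ref{lem:fullocc}. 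Only step (4) determines which occurrence of $s_i$ is selected in the last syllable $\sigma$, and combined with (2)--(3) it shows no selected $s_{i-1}$ can follow it, whence $i\in J_w$. Note that this cannot be localized to $\sigma$ as your sketch suggests: the selected occurrence in $\sigma$ is pinned down only by propagating the constraint from the center syllable through all intermediate syllables containing $s_i$. (Also, your parenthetical claim that $\sigma$ is not the center syllable fails in the degenerate case $M_i=1$.) Without step (4) the offending configuration is not excluded, so as written the proposal does not prove the theorem, even though the completed argument would coincide with the paper's.
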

\begin{proof}
By definition of $\leqv$ we have to show that $y_i^c\leq x_i^c$ for all $i\in[n]$. If $s_i$ is a center of $m_y^c$ then $2 n_i(w)-1= y_i^c$ by Lemma~\ref{lem:stdvert}. In that case, Lemma~\ref{lem:stdvert2} implies that $y_i^c\leq x_i^c$. If $s_i$ is not a center of $m_{y}^c$, there would be a problem if $s_i$ was a center of $m_x^c$ and $x_i^{c}=2 n_i(w)-1$ since by Lemmas~\ref{lem:stdvert} and \ref{lem:stdvert2} we would have $y_i^{c}=2 n_i(w)>x_i^{c}$. Hence it suffices to show that if $s_i$ is a center of $m_x^c$ and $2 n_i(w)-1=x_i^{c}$, then $s_i$ is also a center of $m_{y}^c$. 

We therefore assume that $s_i$ is a center of $m_x^c$ and that $2 n_i(w)-1=x_i^{c}$. We abuse notation and will not distinguish between $w$ and a subword of $m_x^c$ which is an $\mathcal{S}$-reduced expression of $w$. By Lemma \ref{lem:stdvert} we have $x_i^{c}=2 n_i(\varphi(x))-1$, hence there is the same number of contributions of $s_i$ in $m_x^c$ to $w$ as to the subword $w_x^c$. As a consequence, arguing as in the proof of Lemma~\ref{lem:wxunique} we see that each syllable of $m_x^c$ which contains $s_i$ must contribute exactly one $s_i$ to $w$. 

If $i\in R_{c}$, then $s_i$ first occurs in $m_x^c$ as a center (Lemma~\ref{lem:centre2}) of a syllable $w'$ of a polygon $P\in\Pol(x)$; this center must therefore be contributed to both $w$ and $w_x^c$. If there is a syllable in $m_x^c$ with $s_{i-1}$ at its top, then that syllable must be in $P$ and hence it appears before the first $s_i$ (because $i\in R_c$). Any other syllable containing $s_{i-1}$ must also contain $s_i$ (and hence appears after $w'$) and these are the only other syllables in which $s_i$ appears. In particular, the $s_i$ appearing at the right of any such syllable must contribute to $w$, otherwise there would be at some place in $w$ two successive occurrences of $s_i$ with no occurrence of $s_{i-1}$ between them. In particular, the last $s_i$ is contributed from the right of a syllable and there is no $s_{i-1}$ on its right, implying $i\in J_w$. Since $i\in R_{c}$, one then has that $s_i$ is a center of $m_y^c$ by the characterization of Theorem~\ref{thm:bijecgenerales}. 

Now if $i\in L_{c}$, then thanks to Remark \ref{rmq:inversec} one gets that there is no contribution of $s_{i-1}$ before the first occurrence of $s_i$ in $m_x^c$, implying that $i-1\notin I_w$. Since $i\in L_{c}$, by Theorem~\ref{thm:bijecgenerales} again this means that one of the following two conditions is satisfied: either $i\in\supp(x)$ with $i$ not terminal or $i$ is not nested in any polygon and also not terminal. Since we in addition know that $s_i$ appears in $m_x^c$, $s_i$ must be contained in at least one polygon, hence must either be a center or nested in some polygon. Hence the two conditions force $s_i$ to be a center of $m_x^c$: the first condition says exactly that $s_i$ is a center. If the second condition is satisfied this forces $s_i$ to be a center since it has to be either a center or nested. 
\end{proof}

Recall that for $x\in\NC(\Sn,c)$, we denote by $R_x^c$ the image of $i_c(x)$ in $\tl$. The main result now follows easily

\begin{theorem}
Let $x\in\NC(\Sn,c)$. Then $$R_x^c=\sum\limits_{\substack{y\in\NC(\Sn,c), \\ y\leqv x}} c_{x,y} b_{\varphi(y)},$$
where $c_{x,y}\in\mathbb{Z}[v,v^{-1}]$ and $c_{x,x}$ is invertible. 
\end{theorem}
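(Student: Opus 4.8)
The plan is to expand $R_x^c=\omega(i_c(x))=\omega(\bm_x^c)$ directly in the diagram basis and then read off both the support and the diagonal coefficient from the combinatorics of subwords of the standard form. Writing $\bm_x^c$ as a sequence of Artin generators, each factor is sent by $\omega$ to a binomial $v^{-1}-b_k$ (for a positive exponent) or $v-b_k$ (for a negative one). Distributing the product over all factors, $R_x^c$ becomes a $\mathbb{Z}[v,v^{-1}]$-linear combination of terms of the form $\pm(\text{monomial in }v)\,b_{j_1}b_{j_2}\cdots b_{j_r}$, one for each way of selecting the $-b_k$ summand at a subset of positions; the selected positions form a subword of $m_x^c$, since lifting a standard form changes only the exponents of the letters, not the letters themselves. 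By Proposition~\ref{prop:jones}(3) each such product equals $(v+v^{-1})^k b_w$ for a unique $w\in\F(\Sn)$, and by Lemma~\ref{lem:2sous} this $w$ has a reduced expression which is a subword of the chosen subword, hence of $m_x^c$; thus $w\in\Subf(x)$. Therefore $R_x^c=\sum_{w\in\Subf(x)}a_w\,b_w$ with $a_w\in\mathbb{Z}[v,v^{-1}]$.

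Next I would pass to the indexing by noncrossing partitions. Since $\varphi\colon\NC(\Sn,c)\to\F(\Sn)$ is a bijection, write each $w$ occurring above as $w=\varphi(y)$ for a unique $y\in\NC(\Sn,c)$. By Theorem~\ref{thm:zinnogen}, whenever $\varphi(y)\in\Subf(x)$ one has $y\leqv x$; setting $c_{x,y}:=a_{\varphi(y)}$ (and $c_{x,y}:=0$ for those $y\leqv x$ with $\varphi(y)\notin\Subf(x)$) gives exactly the claimed expansion $R_x^c=\sum_{y\leqv x}c_{x,y}\,b_{\varphi(y)}$. Note that $\varphi(x)=w_x^c\in\Subf(x)$ by Theorem~\ref{thm:bijsame} and Corollary~\ref{cor:thmpart1}, so the diagonal term genuinely occurs.

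It then remains to prove that $c_{x,x}=a_{\varphi(x)}$ is invertible, which is the heart of the argument. The coefficient of $b_{\varphi(x)}$ collects the contributions of precisely those subwords $S$ of $m_x^c$ for which $b_S=(v+v^{-1})^{k_S}b_{\varphi(x)}$. I claim the only such subword is $w_x^c$ itself. Indeed, if some such $S$ had $k_S>0$, then Lemma~\ref{lem:2sous} would produce at least two distinct subwords of $S$, and hence of $m_x^c$, that are reduced expressions of $\varphi(x)$, contradicting the uniqueness in Lemma~\ref{lem:wxunique}. Hence $k_S=0$, so $S$ is a reduced expression of $\varphi(x)$, and by Lemma~\ref{lem:wxunique} again $S=w_x^c$. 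Consequently the entire contribution to $c_{x,x}$ comes from the single subword $w_x^c$, which is reduced and fully commutative, so its $b$-product is exactly $b_{\varphi(x)}$ with no $(v+v^{-1})$ factor; the accompanying coefficient is the sign $(-1)^{r}$, where $r$ is the number of letters of $w_x^c$, times the product of the scalar terms $v^{\mp1}$ at the unselected positions, i.e.\ a single Laurent monomial $\pm v^{a}$. Thus $c_{x,x}=\pm v^{a}$ is a unit in $\mathbb{Z}[v,v^{-1}]$, which finishes the proof.

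I expect the main obstacle to be exactly this last step: ruling out that longer, non-reduced subwords reduce back to $b_{\varphi(x)}$ and pollute the diagonal coefficient. The decisive point is that the uniqueness of the reduced subword (Lemma~\ref{lem:wxunique}) is, through Lemma~\ref{lem:2sous}, strong enough to forbid any contribution carrying a nontrivial $(v+v^{-1})$ factor, so that the diagonal entry is forced to be a monomial rather than merely a unit that one would otherwise have to identify by a delicate degree computation in $v$.
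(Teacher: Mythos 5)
Your proposal follows the paper's own proof very closely: the paper likewise expands the lifted standard form $\bm_x^c$ through $\omega$, observes that the resulting terms are indexed by subwords of $m_x^c$, and then concludes by citing exactly the three results you invoke (Lemma~\ref{lem:2sous}, Lemma~\ref{lem:wxunique} and Theorem~\ref{thm:zinnogen}); in fact you spell out the diagonal-coefficient argument in more detail than the paper does.

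However, one inference in that diagonal argument is not valid as written. From $k_S=0$ you conclude that ``$S$ is a reduced expression of $\varphi(x)$''. This does not follow: the Temperley--Lieb relation $b_ib_{i\pm1}b_i=b_i$ collapses words without producing any factor of $(v+v^{-1})$, so a subword $S$ can fail to be reduced and still satisfy $b_S=b_{\varphi(x)}$ with $k_S=0$. Concretely, $b_1b_2b_1=b_1$, while $s_1s_2s_1$ is not a reduced expression of $s_1$. Hence the literal statement of Lemma~\ref{lem:2sous}, which guarantees two distinct reduced subwords only when the exponent is positive, leaves this case open. The repair uses the argument inside the proof of Lemma~\ref{lem:2sous} rather than its statement: whenever a word is not a reduced expression of a fully commutative element, one can (after commutations) apply either $b_i^2=(v+v^{-1})b_i$ or $b_ib_{i\pm1}b_i=b_i$, and in either relation there are two possible choices of surviving $b_i$; hence any such $S$ --- whether or not $k_S>0$ --- admits at least two distinct subwords that are reduced expressions of $\varphi(x)$ (in the example above, the two occurrences of $s_1$). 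These are then two distinct subwords of $m_x^c$, contradicting Lemma~\ref{lem:wxunique} exactly as in your $k_S>0$ case. With this one step repaired, your proof is complete and coincides in substance with the paper's.
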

\begin{proof}
Recall that the standard form $m_x^c$ is an $\S$-word representing the element $x$ and that it is obtained by concatenating various words $m_{c_j}^{c}$ which are standard forms of the cycles $c_j$ corresponding to the polygons of $x$. The words $m_{c_j}^{c}$ are obtained from specific $\mathcal{T}$-reduced expressions of $c_j$ where we replaced each reflection by an $\mathcal{S}$-reduced expression of it called a syllable. As a consequence, if $m_x^c=w_1 w_2\cdots w_m$ where $w_1,\dots, w_m$ are the syllables, then the corresponding simple element $i_c(x)$ of the dual braid monoid is equal to the product of the atoms $i_{c}(t_1)\cdots i_{c}(t_m)$ where $t_1,\dots, t_m$ are the reflections of which $w_1,\dots, w_m$ are $\mathcal{S}$-reduced expressions. But recall that the syllable $w_\ell$ corresponding to the reflection $t_\ell=(i, k+1)$, $i<k+1$ is equal to the word $w_\ell =s_k s_{k-1}\cdots s_i\cdots s_{k-1} s_k$. Thanks to Lemma \ref{lem:atome}, the atom $i_{c}(t_\ell)\in B$ has an expression of the form $\mathbf{s}_k^{\varepsilon_k}\mathbf{s}_{k-1}^{\varepsilon_{k-1}}\cdots \mathbf{s}_i\cdots \mathbf{s}_{k-1}^{-\varepsilon_{k-1}}\mathbf{s}_k^{-\varepsilon_{k}}$ where $\varepsilon_j=\pm 1$ for all $j\in[i+1,k]$. It implies that $i_{c}(x)$ is represented by a word obtained from $m_x^c$ by replacing each of the $s_i$ by $\mathbf{s}_i$ or $\mathbf{s}_i^{-1}$. Now in the Temperley-Lieb algebra, $\mathbf{s}_i$ is mapped to $v^{-1}-b_i$ while $\mathbf{s}_i^{-1}$ is mapped to $v-b_i$. Hence if we expand we get a linear combination of words of the form $b_{i_1} b_{i_2}\cdots b_{i_q}$ where $s_{i_1} s_{i_2}\cdots s_{i_q}$ is a subword of $m_x^c$. Hence putting together Lemmas \ref{lem:wxunique} and \ref{lem:2sous} and Theorem \ref{thm:zinnogen}, we get the result.
\end{proof}

\begin{rmq}
In \cite[Theorem 8.16]{DG} it is shown that $(-1)^{\ls(\varphi(y))}c_{x,y}\in\mathbb{Z}_{\geq 0} [v, v^{-1}]$, but this relies on positivity results in Hecke algebras whose proofs require categorification. In general we do not have a closed formula for $c_{x,y}$. Partial results in this direction in case $c=c_{\mathrm{lin}}$ can be found in \cite[Section 5]{Gob}.  
\end{rmq}

\end{document}